\DeclareFontFamily{U}{shuffle}{}
\DeclareFontShape{U}{shuffle}{m}{n}{ <-8>shuffle7 <8->shuffle10}{}
\newcommand{\FES}{\mathsf {FES}}
\newcommand{\FMTV}{\mathsf {FMT}}
\newcommand{\FAMMV}{\mathsf {FAM}}
\newcommand{\FAMTV}{\mathsf {FAMT}}
\newcommand{\FCMZV}{\mathsf {FCMZ}}
\newcommand{\ES}{\mathsf {ES}}
\newcommand{\MTV}{\mathsf {MT}}
\newcommand{\AMTV}{\mathsf {AMT}}
\newcommand{\sha}{\shuffle}
\newcommand{\Sy}{{\mathcal S}}
\newcommand{\gb}{\beta}
\newcommand{\gd}{\delta}
\newcommand{\ola}{\overleftarrow}
\newcommand\fA{{\mathfrak{A}}}
\newcommand\fT{{\mathfrak{T}}}
\newcommand\ta{{\texttt{a}}}
\newcommand\tb{{\texttt{b}}}
\newcommand\tc{{\texttt{c}}}
\newcommand{\tq}{{\texttt{q}}}
\newcommand\tx{{\texttt{x}}}
\newcommand\ty{{\texttt{y}}}
\newcommand\emtx{{\emph{\texttt{x}}}}
\newcommand\emty{{\emph{\texttt{y}}}}
\newcommand{\emtq}{{\emph{\texttt{q}}}}
\newcommand\gs{{\sigma}}
\newcommand\eps{{\varepsilon}}
\newcommand{\bfp}{{\bf p}}
\newcommand{\bfq}{{\bf q}}
\newcommand{\bfu}{{\bf u}}
\newcommand{\bfv}{{\bf v}}
\newcommand{\bfw}{{\bf w}}
\newcommand{\bfs}{{\boldsymbol{\sl{s}}}}
\newcommand{\bft}{{\boldsymbol{\sl{t}}}}
\newcommand\bfeps{{\boldsymbol \varepsilon}}
\newcommand\bfgs{{\boldsymbol \sigma}}
\newcommand{\calA}{\mathcal{A}}
\newcommand{\calC}{\mathcal{C}}
\newcommand{\calP}{\mathcal{P}}
\DeclareMathOperator*{\sgn}{sgn}
\DeclareMathOperator*{\dep}{dep}
\newcommand{\N}{\mathbb{N}}
\newcommand{\Z}{\mathbb{Z}}
\newcommand{\Q}{\mathbb{Q}}
\newcommand{\ol}{\overline}
\newcommand{\db}{\mathbb{D}}
\def\ppmod#1{{\ (\rm{mod}\ 2)}}
\theoremstyle{plain}
\newtheorem{thm}{Theorem}[section]
\newtheorem{conj}[thm]{Conjecture}
\newtheorem{cpp}[thm]{Conjecture-Principle-Philosophy}
\newtheorem{cor}[thm]{Corollary}
\newtheorem{prop}[thm]{Proposition}
\theoremstyle{definition}
\newtheorem{defn}{Definition}[section]
\newtheorem{rem}[thm]{Remark}
\newtheorem{eg}[thm]{Example}
\newtheorem{prob}[thm]{Problem}
\begin{document}
\title{Finite and Symmetric Euler Sums and Finite and Symmetric (Alternating) Multiple $T$-Values}

\author{Jianqiang Zhao\footnote{Email:zhaoj@ihes.fr}\\
\ \\
Department of Mathematics, The Bishop's School, La Jolla, CA 92037, USA} 

\date{}

\maketitle

\medskip
\noindent
\textbf{Abstract.}
In this paper, we will study finite multiple $T$-values (MTVs) and their alternating versions, which are level two and level four variations of finite multiple zeta values, respectively. We will first provide some structural results for level two finite multiple zeta values (i.e., finite Euler sums) for small weights, guided by the author's previous conjecture that the finite Euler sum space of weight, $w$, is isomorphic to a quotient Euler sum space of weight, $w$. Then, by utilizing some well-known properties of the classical alternating MTVs, we will derive a few important $\Q$-linear relations among the finite alternating MTVs, including the reversal, linear shuffle, and sum relations. We then compute the upper bound for the dimension of the $\Q$-span of finite (alternating) MTVs for some small weights by rigorously using the newly discovered relations, numerically aided by computers.
 
\medskip
\noindent
\textbf{Keywords.}
(finite) Euler sums; symmetric Euler sums; (finite) multiple $T$-values; symmetric multiple $T$-values; alternating multiple $T$-values.

\medskip
\noindent
\textbf{2020 Mathematics Subject Classification.} 11M32; 11B68; 68W30.

\section{Introduction}

In~\cite{KanekoTs2020}, Kaneko and Tsumura proposed a study of  \emph{multiple $T$-values} (MTVs):
\begin{align}\label{defn:MTV}
T(\bfs):= \sum_{\substack{n_1>\dots>n_d>0\\ n_j\equiv d-j+1\ppmod{2}}} \prod_{j=1}^d \frac{1}{n_j^{s_j}}, \quad \bfs=(s_1,\dots,s_d)\in\N^d,
\end{align}
as level two variations of \emph{multiple zeta values}
(MZVs), which, in turn, were first studied
by Zagier~\cite{Zagier1994} and Hoffman~\cite{Hoffman1992} independently:
\begin{equation}\label{defn:MZV}
\zeta(\bfs):=\sum_{n_1>\dots>n_d>0} \prod_{j=1}^d \frac{1}{n_j^{s_j}}, \quad \bfs=(s_1,\dots,s_d)\in\N^d,
\end{equation}
where $\N$ is the set of positive integers.
These series converge if and only if $s_1\ge 2$, in which case we say $\bfs$ is admissible. As~usual, we call
$|\bfs|:=s_1+\cdots+s_d$ the weight and $d$ the depth. Note that the series becomes Riemann zeta values
when $d=1$. One of the most important properties of these values is that they can be expressed by iterated integrals.
The main motivation to consider MTVs is that they
are also equipped with the following iterated integral expressions:\vspace{-6pt}
\begin{equation}\label{equ:MTVitIntegral}
T(\bfs)=\int_0^1  \left(\frac{dt}{t}\right)^{s_1-1}\frac{dt}{1-t^2} \cdots \left(\frac{dt}{t}\right)^{s_d-1}\frac{dt}{1-t^2}
\end{equation}
which provide the MTVs with a $\Q$-algebra structure because of the shuffle product property satisfied by
iterated integral multiplication (see, e.g.,~\cite[Lemma~2.1.2(iv)]{ZhaoBook}).

In addition to MTVs, many other variants of MZVs have been studied due to their
important connections to a variety of objects in both mathematics and theoretical physics.
For example, Yamamoto~\cite{Yamamoto2012b} defined the interpolated
version of MZVs, which connects ordinary MZVs to the starred version;
Hoffman~\cite{Hoffman2019} defined an odd variant by restricting the summation
indices $n_j$'s to odd numbers only; Xu and the author~\cite{XuZhao2020a,XuZhao2020c} further extended
both Kaneko-Tsumura and Hoffman's versions to allow for all possible parity~patterns.

On the other hand, the~congruence properties of the partial sums of MZVs were first considered by
Hoffman~\cite{Hoffman2015} and the author \cite{Zhao2008a} independently.
Contrary to the classical cases,
only a few variants of these sums exist (see, e.g.,~\cite{KanekoMuYo2021,Murahara2014,SingerZ2019}).
In this paper, the author will concentrate on the finite analog of MTVs defined by \eqref{defn:MTV}.

Let $\calP$ be the set of primes. Then by putting\vspace{-6pt}
\begin{equation}\label{equ:poormanA}
\calA:=\prod_{p\in\calP}(\Z/p\Z)\bigg/\bigoplus_{p\in\calP}(\Z/p\Z),
\end{equation}
we can define the \emph{finite multiple zeta values} (FMZVs) according to the following:
\begin{equation*}
\zeta_\calA(\bfs):=\bigg(\sum_{p>n_1>\dots>n_d>0}
 \prod_{j=1}^d \frac{1}{n_j^{s_j}} \pmod{p}\bigg)_{p\in\calP}\in \calA.
\end{equation*}

Nowadays, the~main motivation for studying FMZVs is to understand a deep conjecture
proposed by Kaneko and Zagier around 2014 (see Conjecture~\ref{conj:KanekoZagierAltVersion} below for a generalization).
Although this conjecture is far from being proved, many parallel results have been shown to hold for both
MZVs and FMZVs simultaneously (see, e.g.,~\cite{MuraharaSa2019,SaitoWa2013b,Sakurada2023}).
In particular, for~each positive integer $w\ge 2$, the~element
\begin{equation}\label{equ:betaw}
   \gb_w:=\Big(\frac{B_{p-w}}{w}\Big)_{w<p\in\calP} \in \calA
\end{equation}
is the finite analog of $\zeta(w)$, where $B_n$'s are the Bernoulli numbers defined by
\begin{equation*}
    \frac{t}{e^t-1}=\sum_{n\ge 0}B_n\frac{ t^n}{n!},
\end{equation*}
which have played very important roles in many areas of mathematical studies, such as Clifford analysis~\cite{ChandragiriSh2018} and topology~\cite{KervaireMilnor1958}.

Furthermore, the connection goes even further to their alternating versions --- the Euler sums and finite Euler sums.
For $s_1,\dots,s_d\in\N$ and $\gs_1,\dots,\gs_d=\pm 1$, we define the \emph{Euler sums}  
\begin{equation*}
\zeta(s_1,\dots,s_d;\gs_1,\dots,\gs_d)
   := \sum_{n_1>\cdots>n_d>0}\;\prod_{j=1}^d  \frac{\gs_j^{n_j}}{n_j^{s_j}}.
\end{equation*}

In order to save space, if~$\gs_j=-1$, then $\overline s_j$ will be used, and if a substring, $S$, repeats $n$
times in the list, then $\{S\}^n$ will be used.
For example, the~finite analog of $-\zeta(\bar1)=-\zeta(1;-1)=\log 2$ is the Fermat quotient
\begin{equation}\label{equ:FermatQ2}
  \tq_2:=\Big(\frac{2^{p-1}-1}{p} \pmod{p}\Big)_{3\le p\in\calP} \in \calA.
\end{equation}
Write $\sgn(\bar s)=-1$ and $|\bar s|=s$ if $s\in\N$. For~$s_1,\ldots, s_d\in\db:=\N\cup\bar{\N}$,
we can define the finite \emph{Euler sums} as
\begin{equation*}
\zeta_\calA(\bfs):=\bigg(\sum_{p>n_1>\dots>n_d>0}
 \prod_{j=1}^d \frac{\sgn(s_j)^{n_j}}{n_j^{|s_j|}} \pmod{p}\bigg)_{p\in\calP}\in \calA.
\end{equation*}

In \cite[Conjecture~8.6.9]{ZhaoBook}, we extended the Kakeko--Zagier conjecture to the setting of the Euler sums.
For $\bfs=(s_1,\ldots, s_d)\in\db^d$, define the symmetric version of the alternating Euler sums\vspace{-6pt}
\begin{align*}
\zeta_\sharp^\Sy(\bfs):=&\sum_{i=0}^d
\bigg(\prod_{j=1}^i (-1)^{|s_j|} \sgn (s_j)\bigg) \zeta_\sharp(s_i,\dots,s_1) \zeta_\sharp(s_{i+1},\dots,s_d)
\end{align*}
where $\zeta_\sharp$ ($\sharp=\ast$ or $\sha$) are regularized values (see \cite[Proposition~13.3.8]{ZhaoBook}).
They are called $\sharp$-regularized \emph{symmetric Euler sums}. If~$\bfs\in\N^d$, then they are called
$\sharp$-regularized \emph{symmetric multiple zeta values} (SMZVs).

\begin{conj}[{cf. \cite[Conjecture~8.6.9]{ZhaoBook}}]\label{conj:KanekoZagierAltVersion}
For any $w\in\N$, let $\FES_{w}$ (resp.\ $\ES_w$) be the $\Q$-vector space generated by
all finite Euler sums (resp.\ Euler sums) of weight $w$. Then, there is an isomorphism:\vspace{-6pt}
\begin{align*}
f_\ES: \FES_{w} & \longrightarrow \frac{\ES_w}{\zeta(2)\ES_{w-2}} , \\
 \zeta_\calA(\bfs) & \longmapsto \zeta_\sharp^\Sy(\bfs),
\end{align*}
where $\sharp=\ast$ or $\sha$.
\end{conj}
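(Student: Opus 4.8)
Since Conjecture~\ref{conj:KanekoZagierAltVersion} is open (it refines the still‑unproved Kaneko--Zagier conjecture, which predicts $\FMZV_w\cong\MZV_w/\zeta(2)\MZV_{w-2}$), I indicate only the shape of a proof one might hope for. The plan is to split the asserted isomorphism $f_\ES$ into three separate tasks and to treat them in order of increasing difficulty: (a) the rule $\zeta_\calA(\bfs)\mapsto\zeta_\sharp^\Sy(\bfs)$ descends to a well‑defined $\Q$‑linear (in fact graded $\Q$‑algebra) map $\FES_w\to\ES_w/\zeta(2)\ES_{w-2}$, and the two choices $\sharp=\ast,\sha$ yield the same map; (b) this map is surjective; (c) it is injective. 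Tasks (a) and (b) are, in outline, within reach of present techniques, whereas (c) is the arithmetic core and is not expected to be provable without a genuinely new input.

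For (a), the independence of $\sharp$ follows from the comparison of the two regularizations: the difference $\zeta_\sha(\bfs)-\zeta_\ast(\bfs)$ lies in $\zeta(2)\ES_{|\bfs|-2}$ (the regularized generating series differ by an automorphism whose ``logarithm'' is a $\Q$‑combination of $\zeta(2),\zeta(4),\dots$), and feeding this into the symmetric sum gives $\zeta_\sha^\Sy(\bfs)\equiv\zeta_\ast^\Sy(\bfs)\pmod{\zeta(2)\ES_{w-2}}$. To see that the assignment respects relations, one must check that every $\Q$‑linear relation among the finite Euler sums of weight $w$ maps to a valid identity modulo $\zeta(2)\ES_{w-2}$; I would verify this on the known families --- the stuffle product, the reversal relation $\zeta_\calA(s_1,\dots,s_d)=(-1)^{|\bfs|}\sgn(s_1)\cdots\sgn(s_d)\,\zeta_\calA(s_d,\dots,s_1)$, the linear shuffle relations, and the sum formulas (several of which are established later in this paper in the multiple‑$T$‑value setting). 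Since we do not know that these generate all relations among the $\zeta_\calA(\bfs)$, the honest route is to construct $f_\ES$ intrinsically, via a ``connected sum''/generating‑function identity in the spirit of Seki--Yamamoto and Hirose--Sato, or via a $p$‑adic comparison (\`a la Furusho--Yamashita) between finite Euler sums and a quotient of $p$‑adic Euler sums; the delicate point there is the careful bookkeeping of the $\zeta(2)$‑ambiguity together with the regularization.

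For (b), I would induct on the depth $d$ with the weight $w$ fixed, nested inside an induction on $w$. Unwinding the definition and writing $P_w$ for the span of products of two Euler sums of positive weight (so that $\zeta(2)\ES_{w-2}\subseteq P_w$), the middle terms $1\le i\le d-1$ lie in $P_w$, whence $\zeta_\sharp^\Sy(\bfs)\equiv\zeta_\sharp(s_1,\dots,s_d)+(-1)^{|\bfs|}\sgn(s_1)\cdots\sgn(s_d)\,\zeta_\sharp(s_d,\dots,s_1)\pmod{P_w}$; because $\mathrm{Im}(f_\ES)$ is a graded subalgebra, the weight induction absorbs $P_w$, so it suffices to show that these combinations, as $\bfs$ ranges over $\db^w$, span $\ES_w/P_w$. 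Replacing $\bfs$ by its reversal returns the same combination (this is the image of the finite reversal relation), so one effectively obtains only the symmetrized quantities $\zeta_\sharp(\bfs)+(-1)^{|\bfs|}(\cdots)\,\zeta_\sharp(\ola{\bfs})$; that these already span the ``new'' part of $\ES_w$ modulo products is a finite linear‑algebra statement, checkable by hand in low weight and, I expect, deducible in general from the depth structure of Euler sums --- it is the analogue of the spanning argument underlying surjectivity of the finite‑to‑symmetric map for ordinary MZVs.

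The \textbf{main obstacle} is (c). Injectivity of $f_\ES$ is equivalent to the dimension equality $\dim_\Q\FES_w=\dim_\Q\big(\ES_w/\zeta(2)\ES_{w-2}\big)$, and already its easier half --- an upper bound for $\dim_\Q\FES_w$ matching the conjectured value --- is out of reach: the only general tools for bounding the finite side are explicit congruences of Wolstenholme type (and their alternating refinements), which are not known to exhaust the relations among the $\zeta_\calA(\bfs)$. Meanwhile $\ES$ is governed by the category of mixed Tate motives over $\Z[1/2]$ (Deligne--Goncharov), so an unconditional proof would have to fuse a $p$‑adic/motivic understanding of $\FES_w$ with this archimedean picture --- which is precisely the content of the Kaneko--Zagier philosophy and remains wide open. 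Accordingly, the realistic objective, pursued in the remainder of this paper, is to establish the compatible $\Q$‑linear relations on both sides and to verify the conjecture, together with its multiple‑$T$‑value specializations, in small weights, thereby providing the structural evidence announced in the abstract.
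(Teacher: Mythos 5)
The statement you were asked to prove is not a theorem of the paper: it is stated as Conjecture~\ref{conj:KanekoZagierAltVersion} (the alternating extension of the Kaneko--Zagier conjecture, cf.\ \cite[Conjecture~8.6.9]{ZhaoBook}), and the paper supplies no proof of it, so there is no argument of the author's to compare yours against. Your write-up correctly recognizes this and does not claim a proof; as a roadmap it is reasonable, and the one ingredient you invoke that the paper also records --- that $\zeta_\sha^\Sy(\bfs)-\zeta_\ast^\Sy(\bfs)$ always lies in $\zeta(2)\ES_{w-2}$, so the choice of $\sharp$ is immaterial --- is exactly the remark made right after the conjecture, with reference to Exercise~8.7 of \cite{ZhaoBook}. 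The paper's own contribution toward the conjecture is only structural evidence: Proposition~\ref{prop:fESofTS} showing that $T_\calA$ and $S_\calA$ correspond to $T_\sharp^\Sy$ and $S_\sharp^\Sy$ under the conjectural $f_\ES$, plus low-weight computations and parallel relations on both sides.

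Two cautions if you intend to pursue parts (a) and (b) of your plan. First, well-definedness of $f_\ES$ is itself part of the conjecture: since one does not know a presentation of all $\Q$-linear relations among finite Euler sums, checking the map on known families (stuffle, reversal, linear shuffle, sum formulas) cannot establish (a), and the intrinsic constructions you mention are exactly where the difficulty migrates, so (a) should not be described as essentially within reach. Second, your surjectivity argument needs the symmetrized combinations $\zeta_\sharp(\bfs)\pm\zeta_\sharp(\ola{\bfs})$ to span $\ES_w$ modulo products; for Euler sums this is not available in the form you need and would have to be proved rather than expected. With those caveats, your identification of injectivity (equivalently the dimension equality $\dim_\Q\FES_w=\dim_\Q\ES_w/\zeta(2)\ES_{w-2}$) as the genuinely open core agrees with the paper's standpoint.
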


We remark that $\zeta_\sha^\Sy(\bfs)-\zeta_\ast^\Sy(\bfs)$ always lies in $\zeta(2)\ES_{w-2}$, see \cite{ZhaoBook}, Exercise 8.7.
Thus, it does not matter which version of the symmetric Euler sums is used in the~conjecture.

\begin{prob}
What is the correct generalization of \cite[Theorem~6.3.5]{ZhaoBook} for symmetric Euler sums?
What is the correct extension of \cite[Theorem~8.5.10]{ZhaoBook} to finite Euler sums?
\end{prob}

Our primary motivation for studying alternating MTVs is to better understand this mysterious relation, $f_\ES$.
One of the main results of this paper is the discovery of the linear shuffle relations among the finite alternating
MTVs given by Theorem~\ref{thm:FMTVshuffle}. For~example, it immediately implies the highly nontrivial result in Proposition~\ref{prop:FMTV2d=0}: for all $d\in\N$, we have
\begin{equation*}
  T_\calA(\{1\}^{2d})=0.
\end{equation*}

We now briefly describe the content of this paper. We will start the next section by defining finite MTVs and
symmetric MTVs, which can be shown to appear on the two sides of Conjecture~\ref{conj:KanekoZagierAltVersion}, respectively.
The most useful property of MTVs is that they have the iterated integral expressions \eqref{equ:MTVitIntegral},
satisfying shuffle multiplication. This leads us to the discovery of the linear shuffle relations
for the finite MTVs (and their alternating version) in Section~\ref{sec:linShuffle} and some interesting applications of these relations. Section~\ref{s4} is devoted to presenting a few results about the alternating MTVs and providing their structures explicitly when the weight is one or two. In~the last section, we consider both the finite MTVs and their alternating version by computing the dimension
of the weight $w$ piece for $w<9$ and then compare these data to their Archimedean counterparts, as obtained
by Xu and the author~\cite{XuZhao2020a,XuZhao2020c}.

\section{Symmetric and Finite  Multiple $T$-Values}
It turns out that finite MTVs are closely related to another variant called finite MSVs.
For all admissible $\bfs=(s_1,\dots,s_d)\in\N^d$, we define the \emph{finite multiple $T$-values} (FMTVs)
and the \emph{finite multiple $S$-values} (FMSVs) as\vspace{-6pt}
\begin{align*}
F_\calA(\bfs):=\bigg(  \sum_{\substack{p>n_1>\dots>n_d>0\\ n_j\equiv d-j+1\ppmod{2} \ \text{if }F=T,\\ n_j\equiv d-j\ppmod{2} \ \text{if }F=S}} \prod_{j=1}^d \frac{1}{n_j^{s_j}} \pmod{p}\bigg)_{p\in\calP}\in\calA.
\end{align*}
It is clear that
\begin{equation*}
F_\calA(\bfs)=\frac{1}{2^d}\sum_{\gs_1,\dots,\gs_d=\pm 1}\bigg(\prod_{\substack{1\le j\le d\\ 2|d-j\ \text{if $F=T$}\\  2\nmid d-j\ \text{if $F=S$}}} \gs_j\bigg)  \zeta_\calA(\bfs;\bfgs).
\end{equation*}

Motivated by Conjecture~\ref{conj:KanekoZagierAltVersion}, we provide the following definition.
\begin{defn}
Let $d\in\N$ and $\bfs=(s_1,\dots,s_d)\in\N^d$. Let $F=S$ or $T$.
We define the \emph{$\sharp$-regularized MTVs}
($\sharp=\ast$ or $\sha$) and \emph{MSVs} as
\begin{align*}
F_\sharp(\bfs):=&\, \frac{1}{2^d}\sum_{\gs_1,\dots,\gs_d=\pm 1}\bigg(\prod_{\substack{1\le j\le d\\ 2|d-j\ \text{if $F=T$}\\  2\nmid d-j\ \text{if $F=S$}}} \gs_j\bigg)  \zeta_\sharp(\bfs;\bfgs) \quad (\text{$F=T$ or $S$}).
\end{align*}

We define the \emph{$\sharp$-symmetric multiple $T$-values} (SMTVs) and
 \emph{$\sharp$-symmetric multiple $S$-values} (SMSVs) as
\begin{equation*}
F_\sharp^\Sy(\bfs):=
\left\{
  \begin{array}{ll}
  \displaystyle \sum_{i=0}^d  \Big(\prod_{\ell=1}^i (-1)^{s_\ell} \Big) F_\sharp(s_i,\dots,s_1)F_\sharp(s_{i+1},\dots,s_d),\quad & \quad \hbox{if $d$ is even;} \\
  \displaystyle \sum_{i=0}^d  \Big(\prod_{\ell=1}^i (-1)^{s_\ell} \Big) \widetilde{F}_\sharp(s_i,\dots,s_1)F_\sharp(s_{i+1},\dots,s_d), \quad&\quad \hbox{if $d$ is odd,}
  \end{array}
\right.
\end{equation*}
where $\widetilde{F}=S+T-F$ and we set, as usual, $\prod_{\ell=1}^0=1$.
\end{defn}

\begin{prop} \label{prop:fESofTS}
Suppose $f_\ES$ is defined as per Conjecture \ref{conj:KanekoZagierAltVersion}.
Let $\sharp=\ast$ or $\sha$. Then, for all $\bfs=(s_1,\dots,s_d)\in\N^d$, we have
$f_\ES T_\calA(\bfs)=T_\sharp^\Sy(\bfs)$ and $f_\ES S_\calA(\bfs)=S_\sharp^\Sy(\bfs)$ modulo $\zeta(2)$.
\end{prop}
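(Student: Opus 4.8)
The plan is to reduce the statement to a purely formal identity in $\ES_w$ (with $w=|\bfs|$) by combining the $\Q$-linearity of $f_\ES$ with the two averaging formulas already recorded in the text: the one writing $F_\calA(\bfs)=2^{-d}\sum_\bfgs\bigl(\prod_{j\in J}\gs_j\bigr)\zeta_\calA(\bfs;\bfgs)$, and the defining formula writing $F_\sharp$ of any tuple as the analogous average of $\sharp$-regularized Euler sums. Throughout, $J=J_F\subseteq\{1,\dots,d\}$ denotes the index set carried by the weight, so that $j\in J_T\iff j\equiv d\ppmod{2}$ and $j\in J_S\iff j\not\equiv d\ppmod{2}$, and I write $s^{+1}=s$, $s^{-1}=\bar s$. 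Since $F_\calA(\bfs)\in\FES_w$, applying $f_\ES$ termwise and using the assertion of Conjecture~\ref{conj:KanekoZagierAltVersion} that $f_\ES$ sends each finite Euler sum $\zeta_\calA(\bft)$ to the corresponding regularized symmetric Euler sum $\zeta_\sharp^\Sy(\bft)$ modulo $\zeta(2)$, for \emph{every} index tuple $\bft$ (admissible or not), we obtain
\[
f_\ES F_\calA(\bfs)\equiv\frac{1}{2^d}\sum_{\gs_1,\dots,\gs_d=\pm1}\Bigl(\prod_{j\in J}\gs_j\Bigr)\zeta_\sharp^\Sy(s_1^{\gs_1},\dots,s_d^{\gs_d})\pmod{\zeta(2)}.
\]
It then suffices to prove that the right-hand side equals $F_\sharp^\Sy(\bfs)$; this last equality is an honest identity in $\ES_w$, so the only role of ``modulo $\zeta(2)$'' is that $f_\ES$ is valued in $\ES_w/\zeta(2)\ES_{w-2}$.

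Next I would expand $\zeta_\sharp^\Sy(s_1^{\gs_1},\dots,s_d^{\gs_d})$ by its definition (with $|s_j|$ and $\sgn(s_j)$ replaced by $s_j$ and $\gs_j$), so that its $i$-th summand is $\bigl(\prod_{j=1}^i(-1)^{s_j}\gs_j\bigr)\zeta_\sharp(s_i,\dots,s_1;\gs_i,\dots,\gs_1)\,\zeta_\sharp(s_{i+1},\dots,s_d;\gs_{i+1},\dots,\gs_d)$, and then interchange $\sum_{i=0}^d$ with $\sum_\bfgs$. For a fixed $i$, the combined sign weight is $\prod_{j=1}^d\gs_j^{\varepsilon_j}$, where $\varepsilon_j\in\{0,1\}$ is the parity of the number of the two statements $j\in J$ and $j\le i$ that are true; since $\varepsilon_j$ depends only on $j$ and the two $\zeta_\sharp$-factors depend only on $(\gs_1,\dots,\gs_i)$ and on $(\gs_{i+1},\dots,\gs_d)$, respectively, the $\bfgs$-average factors, and the $i$-th term of the right-hand side becomes
\begin{multline*}
\Bigl(\prod_{j=1}^i(-1)^{s_j}\Bigr)
\Bigl(\frac{1}{2^i}\sum_{\gs_1,\dots,\gs_i=\pm1}\ \prod_{\substack{1\le j\le i\\ j\notin J}}\gs_j\cdot\zeta_\sharp(s_i,\dots,s_1;\gs_i,\dots,\gs_1)\Bigr)\\
\times\Bigl(\frac{1}{2^{d-i}}\sum_{\gs_{i+1},\dots,\gs_d=\pm1}\ \prod_{\substack{i<j\le d\\ j\in J}}\gs_j\cdot\zeta_\sharp(s_{i+1},\dots,s_d;\gs_{i+1},\dots,\gs_d)\Bigr).
\end{multline*}

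The crux is then to identify the two averages with regularized MTVs or MSVs, which is a parity count. For the final block, re-index by $k$ via $j=i+k$: the condition $j\in J_T$ becomes $k\equiv d-i\ppmod{2}$, which is exactly the parity condition defining $T_\sharp$ on a length-$(d-i)$ tuple, so the final block is $T_\sharp(s_{i+1},\dots,s_d)$ (and $S_\sharp(s_{i+1},\dots,s_d)$ in the case $F=S$). For the initial block one must track the reversal: re-indexing by $k$ via $j=i-k+1$, the condition ``$1\le j\le i$, $j\notin J_T$'' again becomes $k\equiv d-i\ppmod{2}$, which for a length-$i$ tuple is the $T_\sharp$-condition when $d$ is even but the $S_\sharp$-condition when $d$ is odd. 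Hence, when $F=T$, the $i$-th term equals $\bigl(\prod_{j=1}^i(-1)^{s_j}\bigr)T_\sharp(s_i,\dots,s_1)\,T_\sharp(s_{i+1},\dots,s_d)$ for $d$ even and $\bigl(\prod_{j=1}^i(-1)^{s_j}\bigr)S_\sharp(s_i,\dots,s_1)\,T_\sharp(s_{i+1},\dots,s_d)$ for $d$ odd; summing over $i$ and comparing with the definition of $F_\sharp^\Sy$ — in which $\widetilde F=S+T-T=S$ in the odd-depth case — produces exactly $T_\sharp^\Sy(\bfs)$. The argument for $F=S$ is word-for-word the same after interchanging the roles of $T$ and $S$ throughout.

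The step I expect to be the main obstacle is precisely this parity bookkeeping: keeping the reversed block $(s_i,\dots,s_1)$ straight and verifying that the reversal forces exactly the $T\leftrightarrow S$ interchange in odd depth that is built into the definition of the symmetric values through $\widetilde F$. A more foundational point one tacitly relies on is that the conjectural $f_\ES$ sends \emph{every} finite Euler sum — including the non-admissible tuples (e.g.\ those beginning with $1$) that arise after reversal — to the corresponding $\sharp$-regularized symmetric Euler sum; this reading of Conjecture~\ref{conj:KanekoZagierAltVersion} is what legitimizes the termwise application of $f_\ES$. Beyond that, everything is a routine interchange of finite sums and a factorization of a sign-average.
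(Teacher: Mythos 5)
Your proposal is correct and follows essentially the same route as the paper: write $F_\calA(\bfs)$ as the signed $2^{-d}$-average of finite Euler sums, apply $f_\ES$ termwise via the conjecture, expand $\zeta_\sharp^\Sy$, swap the sums over $i$ and $\bfgs$, factor the sign-average into the two blocks, and identify each block as a regularized $T_\sharp$ or $S_\sharp$. Your parity bookkeeping for the reversed block (yielding $T_\sharp$ for even $d$ and $S_\sharp$ for odd $d$, matching $\widetilde F$) is exactly the verification the paper carries out for the even-depth $T$ case and leaves to the reader otherwise.
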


\begin{proof}
Suppose $d$ is even and $\bfs\in\N^d$. Then, modulo $\zeta(2)$
\begin{align*}
f_\ES T_\calA(\bfs)=&\, \frac{1}{2^d}\sum_{\eps_1,\dots,\eps_d=\pm 1}\bigg(\prod_{\substack{1\le j\le d\\ j\equiv d\ppmod{2}}} \eps_j\bigg)  f_\ES \zeta_\calA\binom{\bfs}{\bfeps}\\
=&\, \frac{1}{2^d}\sum_{\eps_1,\dots,\eps_d=\pm 1}\bigg(\prod_{\substack{1\le j\le d\\ 2|j}} \eps_j \bigg)\zeta_\sharp^\Sy\binom{\bfs}{\bfeps}\\
=&\, \frac{1}{2^d}\sum_{\eps_1,\dots,\eps_d=\pm 1}\bigg(\prod_{\substack{1\le j\le d\\ 2|j}} \eps_j\bigg)
    \sum_{i=0}^d  \Big(\prod_{\ell=1}^i(-1)^{s_\ell} \eps_\ell\Big) \zeta_\sharp\binom{s_i,\dots,s_1}{\eps_i,\dots,\eps_1}\zeta_\sharp\binom{s_{i+1},\dots,s_d}{\eps_{i+1},\dots,\eps_d}\\
=&\, \frac{1}{2^d} \sum_{i=0}^d \bigg(\prod_{\ell=1}^i  (-1)^{s_\ell}\bigg)
\sum_{\eps_1,\dots,\eps_d=\pm 1}\bigg(\prod_{\substack{1\le j\le d\\ 2|j}} \eps_j\bigg)
  \bigg(\prod_{\ell=1}^i \eps_\ell\bigg) \zeta_\sharp\binom{s_i,\dots,s_1}{\eps_i,\dots,\eps_1}\zeta_\sharp\binom{s_{i+1},\dots,s_d}{\eps_{i+1},\dots,\eps_d}\\
=&\, \frac{1}{2^d} \sum_{i=0}^d  \Big(\prod_{\ell=1}^i (-1)^{s_\ell} \Big)\bigg(\sum_{\eps_1,\dots,\eps_i=\pm 1}
\prod_{\substack{1\le j\le i\\ 2\nmid j}} \eps_j
    \zeta_\sharp\binom{s_i,\dots,s_1}{\eps_i,\dots,\eps_1}\bigg) \\
&\, \hskip6cm \times \bigg(\sum_{\eps_1,\dots,\eps_i=\pm 1}
\prod_{\substack{i< j\le d\\ 2|j}} \eps_j \zeta_\sharp\binom{s_{i+1},\dots,s_d}{\eps_{i+1},\dots,\eps_d}\bigg)\\
=&\, \frac{1}{2^d} \sum_{i=0}^d  \Big(\prod_{\ell=1}^i (-1)^{s_\ell} \Big)T_\sharp(s_i,\dots,s_1)T_\sharp(s_{i+1},\dots,s_d)\\
=&\, T_\sharp^\Sy(\bfs).
\end{align*}

The MSVs and the odd $d$ cases can all be computed similarly and are left to the interested reader.
\end{proof}

Hence, we expect that whenever certain relations hold on the finite side, then the same relations should hold for the
symmetric version, at~least modulo $\zeta(2)$, and~vice~versa. Sometimes, they are valid for the symmetric version even
without modulo $\zeta(2)$. For~example, the~following reversal relations hold for both types of sums by \cite[Propositions~2.8 and 2.9]{FKLQWZ2024}. For~$\bfs=(s_1,\dots,s_d)$, we state $\ola{\bfs}=(s_d,\dots,s_1)$.

\begin{prop}[{Reversal relations}]\label{prop:FMMVreversal}
For all $\bfs\in\N^d$, if~$d$ is even, then
\begin{align*}
T_\calA(\ola{\bfs})=&\, (-1)^{|\bfs|} T_\calA(\bfs) \quad\text{and}\quad S_\calA(\ola{\bfs})=(-1)^{|\bfs|} S_\calA(\bfs), \\
T_\ast^\Sy(\ola{\bfs})=&\, (-1)^{|\bfs|} T_\ast^\Sy(\bfs) \quad\text{and}
\quad S_\ast^\Sy(\ola{\bfs})=(-1)^{|\bfs|} S_\ast^\Sy(\bfs),
\end{align*}
and if $d$ is odd, then
\begin{align*}
T_\calA(\ola{\bfs})=&\, (-1)^{|\bfs|} S_\calA(\bfs)\quad\text{and}\quad S_\calA(\ola{\bfs})=(-1)^{|\bfs|} T_\calA(\bfs) ,\\
T_\ast^\Sy(\ola{\bfs})=&\, (-1)^{|\bfs|} S_\ast^\Sy(\bfs)\quad\text{and}
\quad S_\ast^\Sy(\ola{\bfs})=(-1)^{|\bfs|} T_\ast^\Sy(\bfs).
\end{align*}
\end{prop}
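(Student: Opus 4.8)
The plan is to reduce everything to the reversal relation for ordinary finite Euler sums and its symmetric analogue, and then propagate that through the averaging formulas $F_\calA(\bfs)=\frac1{2^d}\sum_{\bfgs}(\cdots)\zeta_\calA(\bfs;\bfgs)$ and their $\sharp$-symmetric counterparts. The first ingredient is the reversal relation for finite Euler sums: for $\bfs\in\db^d$,
\[
\zeta_\calA(\ola{\bfs})=(-1)^{|\bfs|}\Big(\prod_{j=1}^d\sgn(s_j)\Big)\,\zeta_\calA(\bfs).
\]
In the $p$-th component, one runs the substitution $n_j\mapsto p-n_{d+1-j}$: it is a bijection of $\{\,p>n_1>\dots>n_d>0\,\}$ onto itself (after relabelling), and modulo $p$ one has $1/(p-m)^{|s_j|}\equiv(-1)^{|s_j|}/m^{|s_j|}$ while, $p$ being odd, $\sgn(s_j)^{p-m}=\sgn(s_j)\cdot\sgn(s_j)^{m}$. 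Collecting the factors $(-1)^{|s_j|}\sgn(s_j)$ and re-reading the summation in the reversed order gives the identity (the two forms being equivalent since $|\ola{\bfs}|=|\bfs|$ and the sign factor squares to $1$).

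The second ingredient is the same relation for the $\sharp$-regularized symmetric values,
\[
\zeta_\sharp^\Sy(\ola{\bfs})=(-1)^{|\bfs|}\Big(\prod_{j=1}^d\sgn(s_j)\Big)\,\zeta_\sharp^\Sy(\bfs),
\]
which is purely formal: reversing $\bfs$ turns the inner index $i$ in the definition of $\zeta_\sharp^\Sy$ into $d-i$ and the prefactor $\prod_{j\le i}(-1)^{|s_j|}\sgn(s_j)$ into $\prod_{j>i}(-1)^{|s_j|}\sgn(s_j)$; since each factor squares to $1$, this differs from the original prefactor precisely by $(-1)^{|\bfs|}\prod_j\sgn(s_j)$, and the two regularized numbers in each summand commute, so the sum reassembles into $\zeta_\sharp^\Sy(\bfs)$.

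Now substitute these into the defining averages. Writing $T_\calA(\bfs)=\frac1{2^d}\sum_{\bfgs}\big(\prod_{2\mid d-j}\gs_j\big)\zeta_\calA(\bfs;\bfgs)$, and the analogues for $S_\calA$ and (via the identity implicit in the proof of Proposition~\ref{prop:fESofTS}) for $T_\ast^\Sy$ and $S_\ast^\Sy$ with $\zeta_\calA$ replaced by $\zeta_\ast^\Sy$, I would apply the reversal relation and then reindex the sign vector by $\gs_j\leftrightarrow\gs_{d+1-j}$, a bijection on $\{\pm1\}^d$. Under this reindexing the index set $\{\,j:2\mid d-j\,\}$ is carried onto $\{\,k:k\text{ odd}\,\}$; multiplying by the extra factor $\prod_j\sgn(s_j)=\prod_j\gs_j$ produced by the reversal, the product over the transformed indices collapses to $\prod_{k\text{ even}}\gs_k$. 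For $d$ even this equals the $T$-weight $\prod_{2\mid d-k}\gs_k$, giving $T_\calA(\ola{\bfs})=(-1)^{|\bfs|}T_\calA(\bfs)$; for $d$ odd it equals the $S$-weight $\prod_{2\nmid d-k}\gs_k$, giving $T_\calA(\ola{\bfs})=(-1)^{|\bfs|}S_\calA(\bfs)$. The roles of $T$ and $S$ are interchangeable, so the companion identities for $S_\calA$ follow the same way, and the $\ast$-symmetric statements follow verbatim.

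The only real work is the parity bookkeeping in the last step: one must check carefully that reversing a depth-$d$ tuple swaps the ``even-offset'' and ``odd-offset'' index sets exactly when $d$ is odd --- this is the source of the $T\leftrightarrow S$ exchange --- and, on the symmetric side, that the twist $\widetilde{F}=S+T-F$ built into the definition of $F_\sharp^\Sy$ for odd $d$ is precisely what makes the reindexed double sum reassemble into $S_\sharp^\Sy$ resp.\ $T_\sharp^\Sy$ rather than into a mismatched combination. Everything else is routine; indeed the statement coincides with \cite[Propositions~2.8 and 2.9]{FKLQWZ2024}, which one may alternatively simply invoke.
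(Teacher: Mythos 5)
Your argument is correct, but it is organized differently from the paper's treatment: the paper gives no proof at all here, simply invoking \cite[Propositions~2.8 and 2.9]{FKLQWZ2024}, and when the author does prove a reversal statement in this paper (Proposition~\ref{prop:FAMTVreversal}) it is by the direct substitution $n_j\mapsto p-n_j$ applied to the parity-restricted sum itself, which is presumably also the method behind the cited reference. You instead reduce everything to the reversal relation for (finite and $\sharp$-symmetric) Euler sums and push it through the $\frac1{2^d}\sum_{\bfgs}$ averaging; I checked your sign bookkeeping and it is right: reversal contributes $(-1)^{|\bfs|}\prod_j\gs_j$, reindexing $\gs_j\leftrightarrow\gs_{d+1-j}$ sends the $T$-weight set $\{j:2\mid d-j\}$ to the odd positions, and the product collapses to $\prod_{2\mid k}\gs_k$, which is the $T$-weight for even $d$ and the $S$-weight for odd $d$; likewise your purely formal computation of $\zeta_\sharp^\Sy(\ola{\bfs})$ via $i\mapsto d-i$ is correct. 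What your route buys is uniformity: once the two Euler-sum reversal facts are in hand, the finite and symmetric statements follow by the same two lines. What it costs is that you must have the \emph{exact} averaging identity $F_\sharp^\Sy(\bfs)=\frac1{2^d}\sum_{\bfeps}\big(\prod\eps_j\big)\zeta_\sharp^\Sy(\bfs;\bfeps)$ (exact, not merely modulo $\zeta(2)$ --- which it is, since the $\zeta(2)$ ambiguity in Proposition~\ref{prop:fESofTS} comes only from the $f_\ES$ step), and in particular its odd-depth version, where the $\widetilde F=S+T-F$ twist must be checked to produce $S_\sharp$ on the reversed left block and $T_\sharp$ on the right block; the paper proves this only for even depth and leaves the odd case to the reader. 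That verification does go through (position $m$ of the reversed depth-$i$ block corresponds to original index $j=i+1-m$, and the surviving signs sit exactly on the $S$-weight positions), so your flagged ``parity bookkeeping'' is genuine work but not a gap; alternatively, the direct $n_j\mapsto p-n_j$ substitution on the $T$- and $S$-sums, as in Proposition~\ref{prop:FAMTVreversal}, avoids the averaging identity entirely at the price of treating the finite and symmetric sides separately.
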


\section{Linear Shuffle Relations for Finite Alternating Multiple $T$-Values}\label{sec:linShuffle}
One of the most important tools for studying MZVs and Euler sums is to consider the double shuffle relations that
are produced in two ways to express these sums: one as series (by definition) and the~other as iterated integrals.
This idea will play the key role in the following discovery of the linear shuffle relations for
finite multiple $T$-values (FMTVs) and their alternating~version.

The linear shuffle relations for Euler sums are given in \cite[Theorem~8.4.3]{ZhaoBook}. First, we extend MTVs and FMTVs
to their alternating version. For~all admissible $(\bfs,\bfgs)\in\N^d\times \{\pm 1\}^d$ (i.e., $(s_1,\gs_1)\ne(1,1)$),
we define the alternating multiple $T$-values as\vspace{-4pt}
\begin{equation*}
T(\bfs;\bfgs):=\sum_{\substack{n_1>\dots>n_d>0\\ n_j\equiv d-j+1\ppmod{2}}} \prod_{j=1}^d \frac{\gs_j^{(n_j-d+j-1)/2}}{n_j^{s_j}}.
\end{equation*}
This is basically the same definition we used in \cite{XuYanZhao2022Aug}, except
for a possible sign difference. If~we denote the version in loc. cit. as $T'(\bfs;\bfgs)$, then
\begin{equation}\label{equ:oldNewConvert}
T(\bfs;\bfgs)=T'(\bfs;\bfgs)\prod_{d-j \equiv 0,1 \ppmod{4}} \gs_j.
\end{equation}
We changed to our new convention in this paper because of the significant simplification in this special case.
However, the~old convention is still superior when treating the general alternating multiple mixed values.
Similar to the convention for Euler sums,
we will save space by putting a bar on top of $s_j$ if $\gs_j=-1$. For~example,
\begin{equation*}
T(\bar2,1)= \sum_{n>m>0}  \frac{(-1)^{n-1}}{(2n-2)^2(2m-1)}.
\end{equation*}

In order to study the alternating MTVs, it is to our advantage to consider the \emph{alternating multiple $T$-functions}
of one variable, as follows: for~any real number, $x$, define
\begin{equation*}
T(\bfs;\bfgs;x):=\sum_{\substack{n_1>\dots>n_d>0\\ n_j\equiv d-j+1\ppmod{2}}}
x^{n_1} \prod_{j=1}^d \frac{\gs_j^{(n_j-d+j-1)/2}}{n_j^{s_j}}.
\end{equation*}
In the non-alternating case, this function is the A-function (up to a power of 2) used by
Kaneko and Tsumura in~\cite{KanekoTs2020}.
For all $\eta_1,\dots,\eta_d=\pm 1$, it is then easy to evaluate the iterated integral:

\begin{align*}
&\, \int_0^x  \left(\frac{dt}{t}\right)^{s_1-1}\frac{ dt}{1-\eta_1 t^2} \cdots \left(\frac{dt}{t}\right)^{s_d-1}\frac{ dt}{1-\eta_d t^2} \\
=&\, \sum_{k_1>\dots>k_d>0} x^{2(k_1+\dots+k_d)+d} \prod_{j=1}^d \frac{\eta_j^{k_j}}{(2k_j+2k_{j+1}+\dots+2k_d+d-j+1)^{s_j}}.
\end{align*}

Let
\begin{equation*}
\ty_0=\frac{dt}{t}, \qquad \ty_1=\frac{dt}{1-t^2}, \qquad \ty_{-1}:=\frac{dt}{1+t^2}.
\end{equation*}
By changing the indices $n_j=2k_j+2k_{j+1}+\dots+2k_d+d-j+1$, we immediately obtain
\begin{equation*}
T(\bfs;\bfgs;x)=  \int_0^x \bfp\Big(\ty_0^{s_1-1} \ty_{\gs_1} \cdots \ty_0^{s_d-1} \ty_{\gs_d}\Big)
:= \int_0^x \ty_0^{s_1-1} \ty_{\eta_1} \cdots \ty_0^{s_d-1} \ty_{\eta_d},
\end{equation*}
where $\eta_j=\gs_1\cdots \gs_j$ for all $j\ge 1$ and  $\bfp,\bfq$
represent the conversions between the series and the integral expressions of
alternating MTVs:\vspace{-4pt}
\begin{align}\label{equ:bfp}
\bfp(\bfu):=&\, \ty_0^{s_1-1} \ty_{\gs_1}\dots\ty_0^{s_j-1} \ty_{\gs_1\cdots\gs_j}\dots\ty_0^{s_d-1} \ty_{\gs_1\cdots\gs_d},\\
\bfq(\bfu):=&\, \ty_0^{s_1-1} \ty_{\gs_1}\dots\ty_0^{s_j-1} \ty_{\gs_j/\gs_{j-1}}\dots\ty_0^{s_d-1} \ty_{\gs_d/\gs_{d-1}}. \label{equ:bfq}
\end{align}
Namely, $\bfp$ pushes a word used in the series definition to a word used in the integral expression,
whereas $\bfq$ goes backward. See \cite{XuYanZhao2022Aug} for more~details.

In order to state the linear shuffle relations among FMTVs and their alternating version,
first, we quickly review the algebra setup
and the corresponding results for Euler sums. Let $\fA_1^*$ (resp.\ $\fA_2^*$) be the
$\Q$-algebra of words on $\{\tx_0,\tx_1\}$ (resp.\ $\{\tx_0,\tx_1,\tx_{-1}\}$) with concatenation as the product.
Let $\fA_j^1$ ($j=1,2$) be the sub-algebra generated by the words not ending with $\tx_0$.
Then, for each word $\bfu=\tx_0^{s_1-1} \tx_{\eta_1}\dots\tx_0^{s_d-1} \tx_{\eta_d}\in \fA_2^1$, we define
\begin{equation*}
\zeta_\calA(\bfu):=\zeta_\calA (s_1,\dots,s_d;\gs_1,\dots,\gs_d)
\end{equation*}
where $\gs_1=\eta_1$ and $\gs_j=\eta_j/\eta_{j-1}$ for all $j\ge 2$. Set $\tau(1)=1$ and
\begin{equation*}
\tau(\tx_0^{s_1-1}\tx_1 \dots \tx_0^{s_d-1}\tx_1)=(-1)^{s_1+\dots+s_r} \tx_0^{s_d-1}\tx_1 \dots \tx_0^{s_1-1}\tx_1.
\end{equation*}

\begin{thm}[{\cite[Theorem~8.4.3]{ZhaoBook}}]\label{thm:FESshuffle} 
For all words, $\bfw,\bfu\in\fA_1^1$, $\bfv\in\fA_2^1$, and~ $s\in\N$, we~have
\begin{itemize}
 \item[{(i)}]
 $\zeta_\calA(\bfu \sha \bfv)=\zeta_\calA(\tau(\bfu)\bfv)$;

\item[{(ii)}]
 $\zeta_\calA( (\bfw\bfu) \sha \bfv)=\zeta_\calA(\bfu \sha\tau(\bfw)\bfv)$;

\item[{(iii)}]
 $\zeta_\calA( (\emtx_0^{s-1}\emtx_1 \bfu) \sha\bfv)
 =(-1)^s \zeta_\calA(\bfu \sha (\emtx_0^{s-1}\emtx_1\bfv)).$
\end{itemize}
\end{thm}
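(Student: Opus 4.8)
The plan is to deduce parts~(ii) and~(iii) from part~(i), and then to prove~(i) one prime at a time by combining the classical shuffle relation for one-variable multiple polylogarithms with the substitution $n_j\mapsto p-n_j$. For the reductions, note that $\tau$ reverses the order of the blocks $\tx_0^{s-1}\tx_1$ in a word and multiplies by $(-1)^{|\bfu|}$, so it is an involutive anti-automorphism of $\fA_1^*$ that stabilizes $\fA_1^1$, with $\tau(\bfw\bfu)=\tau(\bfu)\tau(\bfw)$ and $\tau^2=\mathrm{id}$. Granting~(i), apply it to the pair $(\bfw\bfu,\bfv)$ and then, read backwards, to the pair $(\bfu,\tau(\bfw)\bfv)$ to get
\[
\zeta_\calA\big((\bfw\bfu)\sha\bfv\big)=\zeta_\calA\big(\tau(\bfw\bfu)\,\bfv\big)=\zeta_\calA\big(\tau(\bfu)\,\tau(\bfw)\bfv\big)=\zeta_\calA\big(\bfu\sha(\tau(\bfw)\bfv)\big),
\]
which is~(ii); specializing to $\bfw=\tx_0^{s-1}\tx_1$, for which $\tau(\bfw)=(-1)^s\bfw$, turns~(ii) into~(iii). (Conversely,~(i) is recovered by iterating~(iii), stripping off one block of $\bfu$ at a time.) In particular, the $\bfv=1$ case of~(i) is just the finite reversal relation $\zeta_\calA(\bfu)=\zeta_\calA(\tau(\bfu))$ on $\fA_1^1$, which already drops out of $n_j\mapsto p-n_j$ modulo $p$.

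For~(i) I would pass to truncated multiple harmonic sums. For $\bfw=\tx_0^{t_1-1}\tx_{\eta_1}\cdots\tx_0^{t_d-1}\tx_{\eta_d}\in\fA_2^1$ and a prime $p$, let $\zeta_{<p}(\bfw)$ denote the partial sum over $p>n_1>\cdots>n_d>0$ whose reduction mod $p$ is $\zeta_\calA(\bfw)$, and let $\calC_n(\bfw)=\sum_{n=n_1>\cdots>n_d>0}\prod_j\gs_j^{n_j}/n_j^{t_j}$ be the coefficient of $x^n$ in the one-variable multiple polylogarithm $L(\bfw;x)$ attached to $\bfw$, so that $\zeta_{<p}(\bfw)=\sum_{n=1}^{p-1}\calC_n(\bfw)$. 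Since $L(\bfw;x)$ is an iterated integral along $[0,x]$ from a fixed endpoint (convergent for $|x|<1$), the shuffle product of iterated integrals yields $L(\bfu;x)\,L(\bfv;x)=L(\bfu\sha\bfv;x)$; comparing coefficients of $x^N$ and summing over $1\le N\le p-1$ gives
\[
\zeta_{<p}(\bfu\sha\bfv)=\sum_{\substack{n,m\ge1\\ n+m\le p-1}}\calC_n(\bfu)\,\calC_m(\bfv).
\]

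The heart of the argument is a single change of summation variable in this identity. Since $\bfu\in\fA_1^1$ involves only $\tx_0,\tx_1$, write $\calC_n(\bfu)=\sum_{n=n_1>\cdots>n_k>0}\prod_i n_i^{-t_i}$ (no signs), substitute $n_i\mapsto p-n_i$, and reduce modulo $p$: the $\bfu$-chain is reversed, each factor $1/(p-n_i)^{t_i}$ picks up a sign $(-1)^{t_i}$, and, crucially, the constraint $n_1+m_1\le p-1$ becomes $m_1<p-n_1$, which stacks the entire reversed $\bfu$-chain strictly above the untouched $\bfv$-chain. Reading the resulting chain $p>\cdots>m_1>\cdots>0$ back into the alphabet $\{\tx_0,\tx_1,\tx_{-1}\}$, it is exactly the word $\tau(\bfu)\bfv$, namely $(-1)^{|\bfu|}$ times the concatenation of the block-reversal of $\bfu$ with $\bfv$, the $\bfv$-part keeping all its exponents and signs $\gs_j$ verbatim. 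Hence $\zeta_{<p}(\bfu\sha\bfv)\equiv\zeta_{<p}(\tau(\bfu)\bfv)\pmod p$ for every prime $p$, which is~(i).

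The step I expect to be the main obstacle is precisely this relabelling: one must keep the alternating signs straight and verify that after the substitution and the reduction mod $p$ the stacked chain genuinely represents the free-algebra element $\tau(\bfu)\bfv$ and not a neighbouring word. Two smaller points also deserve a remark: the shuffle identity for $L(\cdot;x)$ is used only for $|x|<1$, so no shuffle regularization is needed even when $\bfu$ or $\bfv$ begins with $\tx_1$; and $\bfu\sha\bfv$ lies in $\fA_2^1$ automatically, since neither factor ends in $\tx_0$. Everything else is routine unwinding of the definitions.
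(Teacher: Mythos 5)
Your proposal is correct, but it runs in the opposite direction from the source. The paper does not reprove Theorem~\ref{thm:FESshuffle} (it quotes \cite[Theorem~8.4.3]{ZhaoBook}); the method it uses for the $T$-analogue, Theorem~\ref{thm:FMTVshuffle}, is to reduce (i) and (ii) to (iii) and then prove (iii) by applying the one-variable generating function to the word identity $\tx_1\big((\tx_0^{s-1}\tx_1\bfu)\sha\bfv-(-1)^s\bfu\sha(\tx_0^{s-1}\tx_1\bfv)\big)=\sum_{i=0}^{s-1}(-1)^i(\tx_0^{s-1-i}\tx_1\bfu)\sha(\tx_0^{i}\tx_1\bfv)$, extracting the coefficient of $x^p$ (which is $\tfrac1p$ times a truncated sum), and using $p$-integrality of the cross terms. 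You instead prove (i) directly: compare coefficients in the convergent shuffle identity $L(\bfu;x)L(\bfv;x)=L(\bfu\sha\bfv;x)$ for $|x|<1$, sum up to $x^{p-1}$, and apply $n_i\mapsto p-n_i$ to the $\bfu$-chain so that the reversed chain stacks strictly above the $\bfv$-chain, producing $\zeta_{<p}(\tau(\bfu)\bfv)$ modulo $p$; then (ii) and (iii) follow formally from (i) via $\tau(\bfw\bfu)=\tau(\bfu)\tau(\bfw)$ and $\tau(\tx_0^{s-1}\tx_1)=(-1)^s\tx_0^{s-1}\tx_1$ (your bookkeeping here, including the preservation of the $\bfv$-signs across the junction because all letters of $\bfu$ are $\tx_1$, checks out). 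Each route has its merits: yours is elementary and self-contained, needs no regularization or coefficient-of-$x^p$ extraction, and makes visible exactly why $\bfu$ must lie in $\fA_1^1$ (the substitution $n\mapsto p-n$ would scramble genuine signs $\gs^{n}$); the paper's (iii)-first route is the one that transfers to the $T$-setting, where the parity constraints on the summation indices obstruct a naive reversal-stacking argument and instead surface as the depth-parity hypotheses of Theorem~\ref{thm:FMTVshuffle}.
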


For alternating MTVs, we can similarly let $\fT_1^*$ (resp.\ $\fT_2^*$) be the
$\Q$-algebra of words on $\{\ty_0,\ty_1\}$ (resp.\ $\{\ty_0,\ty_1,\ty_{-1}\}$) with concatenation as the product.
Let $\fT_j^1$ ($j=1,2$) be the sub-algebra generated by the words not ending with $\ty_0$.
Then, for each word $\bfu=\ty_0^{s_1-1} \ty_{\gs_1}\dots\ty_0^{s_d-1} \ty_{\gs_d}\in \fA_2^1$,
let $\bfp,\bfq: \fA_2^1  \to \fA_2^1$ be the two maps defined in \eqref{equ:bfp} and \eqref{equ:bfq}.
Then, we can extend the definition of alternating MTVs and their corresponding
one-variable functions to the word level:\vspace{-6pt}
\begin{equation*}
F_\ast(\bfu):=  F(\bfs;\bfgs), \quad F_\sha(\bfu):=  F_\ast\big(\bfq(\bfu)\big), \quad F_\ast(\bfu)= F_\sha\big(\bfp(\bfu)\big),
\end{equation*}
where $F(-)$ can be either $T(-)$,~$T_\calA$, or~$T(-;x)$ or even their partial sums, such as\vspace{-6pt}
\begin{equation*}
T_n(\bfs;\bfgs):=\sum_{\substack{n>n_1>\dots>n_d>0\\ n_j\equiv d-j+1\ppmod{2}}}
\prod_{j=1}^d \frac{\gs_j^{(n_j-d+j-1)/2}}{n_j^{s_j}}.
\end{equation*}

For all words $\bfw\in\fT_2^1$, we set $T_\calA(\bfw):=T_{\calA,\sha}(\bfw)=T_{\calA,\ast}\big(\bfq(\bfw)\big).$ Further, set $\tau(1)=1$ and
\begin{equation*}
\tau(\ty_0^{s_1-1}\ty_1 \dots \ty_0^{s_d-1}\ty_1)=(-1)^{s_1+\dots+s_r} \ty_0^{s_d-1}\ty_1 \dots \ty_0^{s_1-1}\ty_1.
\end{equation*}

\begin{thm}\label{thm:FMTVshuffle}
For all words $\bfw,\bfu\in\fT_1^1$, $\bfv\in\fT_2^1$ and  $s\in\N$, we~have
\begin{itemize}
 \item[{(i)}]
 $T_\calA(\bfu \sha \bfv)=T_\calA(\tau(\bfu)\bfv)$ if $\dep(\bfu)+\dep(\bfv)$ is even;

\item[{(ii)}]
 $T_\calA( (\bfw\bfu) \sha \bfv)=T_\calA(\bfu \sha\tau(\bfw)\bfv)$ if $\dep(\bfu)+\dep(\bfv)+\dep(\bfw)$ is even;

\item[{(iii)}]
 $T_\calA( (\emty_0^{s-1}\emty_1 \bfu) \sha\bfv)
 =(-1)^s T_\calA(\bfu \sha (\emty_0^{s-1}\emty_1\bfv))$ if $\dep(\bfu)+\dep(\bfv)$ is odd.
\end{itemize}
\end{thm}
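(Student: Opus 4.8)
The plan is to imitate the proof of the Euler-sum analogue, Theorem~\ref{thm:FESshuffle}; the one genuinely new feature is the depth-parity hypotheses, which encode the $T\leftrightarrow S$ dichotomy of Proposition~\ref{prop:FMMVreversal}. First I would deduce (ii) and (iii) from (i). On $\fT_1^1$ the map $\tau$ is an involution with $\tau(\bfw\bfu)=\tau(\bfu)\tau(\bfw)$, $\dep(\tau(\bfw))=\dep(\bfw)$, and $\tau(\ty_0^{s-1}\ty_1)=(-1)^s\ty_0^{s-1}\ty_1$. If $\dep(\bfu)+\dep(\bfv)+\dep(\bfw)$ is even then both $\dep(\bfw\bfu)+\dep(\bfv)$ and $\dep(\bfu)+\dep(\tau(\bfw)\bfv)$ are even, so applying (i) twice gives
\[
T_\calA\big((\bfw\bfu)\sha\bfv\big)=T_\calA\big(\tau(\bfw\bfu)\bfv\big)=T_\calA\big(\tau(\bfu)\tau(\bfw)\bfv\big)=T_\calA\big(\bfu\sha\tau(\bfw)\bfv\big),
\]
which is (ii); taking $\bfw=\ty_0^{s-1}\ty_1$ here (legitimate, since $1+\dep(\bfu)+\dep(\bfv)$ is even under the hypothesis of (iii)) and using $\tau(\ty_0^{s-1}\ty_1)=(-1)^s\ty_0^{s-1}\ty_1$ yields (iii).

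For (i) I would run the double-shuffle argument in the form it takes for finite values. For $\bfw\in\fT_2^1$, expand the iterated integral $\int_0^x\bfw$ in the forms $\ty_0,\ty_1,\ty_{-1}$ into the series of the corresponding alternating multiple $T$-function, $\int_0^x\bfw=\sum_n a_n(\bfw)x^n$ with $a_n(\bfw)\in\Q$. Then $\sum_{0<n<p}a_n(\bfw)$ is exactly the truncated multiple $T$-sum $\sum_{p>n_1>\cdots>n_e>0}$, with parity and sign data supplied by $\bfq(\bfw)$, defining $T_{\calA,\ast}(\bfq(\bfw))=T_\calA(\bfw)$, and it is $p$-integral since all $n_j<p$; hence $T_\calA(\bfw)=\big(\sum_{0<n<p}a_n(\bfw)\bmod p\big)_{p\in\calP}$. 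The shuffle product $\int_0^x(\bfu\sha\bfv)=\big(\int_0^x\bfu\big)\big(\int_0^x\bfv\big)$ of iterated integrals gives $a_n(\bfu\sha\bfv)=\sum_{k+l=n}a_k(\bfu)a_l(\bfv)$, whence
\[
T_\calA(\bfu\sha\bfv)=\Big(\sum_{\substack{k,l\ge1\\ k+l\le p-1}}a_k(\bfu)\,a_l(\bfv)\ \bmod p\Big)_{p\in\calP}.
\]

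The heart of the matter is to reverse the $\bfu$-indices in this convolution. Write $\bfu=\ty_0^{s_1-1}\ty_1\cdots\ty_0^{s_{d_1}-1}\ty_1$; since $\bfu$ has only $\ty_1$'s, $a_k(\bfu)$ is the signless sum of $\prod_i m_i^{-s_i}$ over $k=m_1>\cdots>m_{d_1}>0$ with $m_i\equiv d_1-i+1\ppmod{2}$, while for $\bfv=\ty_0^{t_1-1}\ty_{\gd_1}\cdots\ty_0^{t_{d_2}-1}\ty_{\gd_{d_2}}$ ($\gd_j=\pm1$) the coefficient $a_l(\bfv)$ is the sum of $\prod_j(\gd_j/\gd_{j-1})^{(n_j-d_2+j-1)/2}n_j^{-t_j}$ over $l=n_1>\cdots>n_{d_2}>0$ with $n_j\equiv d_2-j+1\ppmod{2}$ (convention $\gd_0=1$). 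In the double sum, perform $m_i\mapsto p-m_i$: the $m$'s reverse order and relabel as $\nu_i:=p-m_{d_1+1-i}$; one has $(p-m)^{-s}\equiv(-1)^s m^{-s}\pmod p$, which produces exactly the global sign $(-1)^{s_1+\cdots+s_{d_1}}$ carried by $\tau(\bfu)$; the constraint $k+l=m_1+n_1\le p-1$ becomes $\nu_{d_1}>n_1$, so the combined indices nest as $p>\nu_1>\cdots>\nu_{d_1}>n_1>\cdots>n_{d_2}>0$; and the parity condition becomes $\nu_i\equiv 1-i\ppmod{2}$, which equals the multiple-$T$ parity $e-i+1\ppmod{2}$ (with $e:=d_1+d_2$) \emph{precisely when $d_1+d_2$ is even}, and the multiple-$S$ parity $e-i\ppmod{2}$ otherwise, whereas the untouched $\bfv$-indices automatically occupy multiple-$T$ parity at the positions $d_1+1,\dots,e$. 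Consequently, under the hypothesis $\dep(\bfu)+\dep(\bfv)$ even, the merged sum is exactly the truncated $T$-sum of the word with blocks $(s_{d_1},\dots,s_1,t_1,\dots,t_{d_2})$ and letter-subscripts $(1,\dots,1,\gd_1,\dots,\gd_{d_2})$; reinstating the sign $(-1)^{s_1+\cdots+s_{d_1}}$ identifies this with $T_\calA(\tau(\bfu)\bfv)$, proving (i).

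The main obstacle is the parity accounting just sketched: one must verify that $m_i\mapsto p-m_i$ sends $m_i\equiv d_1-i+1\ppmod{2}$ to $\nu_i\equiv 1-i\ppmod{2}$, and hence that after merging with the (untouched) $\bfv$-indices the combined sum carries the multiple-$T$ parity pattern if and only if $\dep(\bfu)+\dep(\bfv)$ is even --- this is exactly the $T\leftrightarrow S$ phenomenon of Proposition~\ref{prop:FMMVreversal}, now surfacing one block at a time. A secondary, routine matter is to check the $p$-integrality needed to read the congruences, and to note that the level-four letter $\ty_{-1}$ causes no trouble: since $1/(1+t^2)$ is even in $t$, its integral still contributes only odd powers of $x$, so $\ty_{-1}$ affects neither the parity conditions nor the range constraint, entering each term only through a $\pm1$ that rides along inside the $\bfv$-part.
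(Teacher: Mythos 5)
Your proposal is correct, but it reaches the theorem by a genuinely different route than the paper. You prove (i) head-on: expand the iterated integrals into the one-variable $T$-series, write $T_\calA(\bfu\sha\bfv)$ as the mod-$p$ convolution $\sum_{k+l\le p-1}a_k(\bfu)a_l(\bfv)$, and reverse only the $\bfu$-block via $m_i\mapsto p-m_i$ (legitimate precisely because $\bfu\in\fT_1^1$ carries no signs), so that the merged indices nest into the truncated sum for $\tau(\bfu)\bfv$; the parity bookkeeping ($\nu_i\equiv 1-i$ versus the depth-$e$ pattern $e-i+1$) is exactly where the evenness of $\dep(\bfu)+\dep(\bfv)$ is used, and (ii), (iii) then follow from (i) via $\tau(\bfw\bfu)=\tau(\bfu)\tau(\bfw)$ and $\tau(\ty_0^{s-1}\ty_1)=(-1)^s\ty_0^{s-1}\ty_1$. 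The paper runs the reductions in the opposite direction ((iii)$\Rightarrow$(ii)$\Rightarrow$(i)) and proves (iii) by applying $T_\sha(-;x)$ to the word identity $\tb\big((\ta^{s-1}\tb\bfu)\sha\bfv-(-1)^s\,\bfu\sha(\ta^{s-1}\tb\bfv)\big)=\sum_{i=0}^{s-1}(-1)^i(\ta^{s-1-i}\tb\bfu)\sha(\ta^i\tb\bfv)$ and extracting the coefficient of $x^p$: the left side gives $\tfrac1p$ times the difference of truncated sums (the depth-parity hypothesis enters there, since the $x^p$-coefficient is nonzero only for odd-depth words), while the right side is a $p$-integral convolution of coefficients of degree $<p$. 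Your approach buys a transparent explanation of the depth conditions --- the $T$/$S$ dichotomy of Proposition~\ref{prop:FMMVreversal} surfaces block by block --- and establishes (i) directly; the paper's approach concentrates all $p$-divisibility in a single coefficient and so avoids the merging/parity/sign bookkeeping, in particular the check (which your sketch calls routine and should be written out) that the $\bfv$-signs are unchanged because each $\bfv$-position keeps its distance from the end of the word, so the exponents $(n_j-d_2+j-1)/2$ are preserved in the merged word. Both arguments ultimately rest on the shuffle property of iterated integrals and the $p$-integrality of all coefficients of degree below $p$.
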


\begin{proof}
By taking $\bfu=\emptyset$ and then setting $\bfw=\bfu$, we see that (ii) implies (i).
By decomposing $\bfw$ into strings of type $\ty_0^{s-1}\ty_1$,
we see that (iii) implies (ii). So, we only need to prove (iii).

For simplicity, write $\ta=\ty_0$ and $\tb=\ty_1$ for the rest of this proof.
Observe that for any odd prime $p$, the~coefficient of $x^p$ of $T(\bfs;\bfgs;x)$
is nontrivial if and only if $\dep(\bfs)$ is odd. Therefore, if~the depth $d$ of the word
$\bfw$ is even, the coefficient of $x^p$ in $T_\ast\big(\bfq(\tb\bfw);x\big)$ is given as
\begin{equation*}
 {\rm Coeff}_{x^p}\Big[T_\ast\big(\bfq(\tb\bfw);x\big)\Big]=
  {\rm Coeff}_{x^p}\Big[T_\sha\big(\tb\bfw;x\big)\Big]=
  \frac{1}{p} T_{p,\sha}(\bfw)
\end{equation*}
since  $\bfq(\tb\bfw)=\tb\bfq(\bfw)$.
Observe that
\begin{equation*}
\tb \Big( (\ta^{s-1}\tb \bfu) \sha\bfv -(-1)^s  \bfu \sha (\ta^{s-1}\tb \bfv)\Big)
=\sum_{i=0}^{s-1} (-1)^{i} (\ta^{s-1-i}\tb \bfu) \sha (\ta^{i}\tb \bfv).
\end{equation*}
Hence, if~$\dep(\bfu)+\dep(\bfv)$ is odd, then, by first applying $T_\sha(-;x)$ to the above
and then extracting the coefficients of $x^p$ from both sides, we obtain
\begin{align*}
&\, \frac{1}{p}\Big( T_{p,\sha}\big((\ta^{s-1}\tb \bfu) \sha\bfv\big)
  -(-1)^s T_{p,\sha}\big(\bfu \sha (\ta^{s-1}\tb \bfv)\big)\Big)\\
=&\,\sum_{i=0}^{s-1} (-1)^{i} {\rm Coeff}_{x^p}
  \big[T_{\sha}(\ta^{s-1-i}\tb \bfu;x)T_{\sha}(\ta^{i}\tb \bfv;x)\big] \\
=&\,\sum_{i=0}^{s-1} (-1)^{i}\sum_{j=1}^{p-1}
{\rm Coeff}_{x^j}\big[T_{\sha}(\ta^{s-1-i}\tb \bfu;t) \big]\cdot
{\rm Coeff}_{x^{p-j}}\big[T_{\sha}(\ta^{i}\tb \bfv;t)\big]
\end{align*}
from shuffling the product property of the iterated integrals. Now, the last sum
is $p$-integral since $p-j<p$ and $j<p$, and therefore, we obtain
\begin{equation*}
T_{p,\sha}(\ta^{s-1}\tb \bfu) \sha\bfv)\equiv
(-1)^s T_{p,\sha}(\bfu \sha (\ta^{s-1}\tb \bfv)) \pmod{p}
\end{equation*}
which completes the proof of (iii).
\end{proof}

\begin{rem}
In~\cite{Jarossay2014}, Jarossay showed that the corresponding results of Theorem~\ref{thm:FESshuffle}
hold for SMZVs. Theorem \ref{thm:FMTVshuffle}, Conjecture~\ref{conj:KanekoZagierAltVersion} and Proposition~\ref{prop:fESofTS}
clearly imply that similar statements also hold true for SMTVs when the depth conditions
are satisfied, as in Theorem~\ref{thm:FMTVshuffle}. However, it is possible to prove this unconditionally
by using the generalized Drinfeld associator $\Psi_2$ and considering the words of the form
$\tx_0^{s_1-1}(\tx_1+\tx_{-1})\cdots\tx_0^{s_d-1}(\tx_1+\tx_{-1})$ in \cite[Theorem~13.4.1]{ZhaoBook}.
The details of this work will appear in a future paper.
\end{rem}

We can now derive a sum formula for FMTVs.
\begin{thm} \label{thm:sumFMTVs}
Suppose $d\in\N$ is odd. For~all $s_1,\dots,s_d\in\N$, we have\vspace{-6pt}
\begin{equation*}
  T_\calA(1,\bfs)+T_\calA(\bfs,1)+\sum_{j=1}^d\sum_{a=1}^{s_j+1} T_\calA(s_1,\dots,s_{j-1},a,s_j+1-a,s_{j+1},\dots,s_d)=0.
\end{equation*}
\end{thm}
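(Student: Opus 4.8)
The plan is to deduce this sum formula from the linear shuffle relation of Theorem~\ref{thm:FMTVshuffle}(iii), exactly the way the classical sum formula for MZVs is deduced from the ``Hoffman-type'' shuffle identity. Since $d$ is odd, the word attached to $\bfs$ in the integral representation has odd depth, and the combination $T_\calA(1,\bfs) + T_\calA(\bfs,1) + \sum_j\sum_a T_\calA(\dots)$ should appear on one side of a shuffle of the one-letter word $\ty_1$ against the word $\ty_0^{s_1-1}\ty_1\cdots\ty_0^{s_d-1}\ty_1$, up to the sign bookkeeping in $\bfp,\bfq$.

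First I would set up notation: write $\bfw = \ty_0^{s_1-1}\ty_1\cdots\ty_0^{s_d-1}\ty_1 \in \fT_1^1$, which corresponds to $(\bfs;\{1\}^d)$, so $\dep(\bfw)=d$. I would apply Theorem~\ref{thm:FMTVshuffle}(iii) with $s=1$, $\bfu=\emptyset$, $\bfv=\bfw$: since $\dep(\emptyset)+\dep(\bfw)=d$ is odd, it gives $T_\calA(\ty_1 \sha \bfw) = -\,T_\calA(\ty_1\bfw)$. Now $\ty_1\bfw$ is the word for $(1,\bfs;\{1\}^{d+1})$ (depth $d+1$), and the shuffle $\ty_1 \sha \bfw$ expands combinatorially: inserting the single letter $\ty_1$ into all $d+1$ gaps of $\bfw$ and summing. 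Inserting $\ty_1$ at the very front gives $\ty_1\bfw$; inserting at the very end gives $\bfw\ty_1$, the word for $(\bfs,1;\{1\}^{d+1})$; and inserting $\ty_1$ in the interior of a block $\ty_0^{s_j-1}\ty_1$, say as $\ty_0^{a-1}\ty_1\ty_0^{s_j-a}\ty_1$ for $a=1,\dots,s_j$, produces exactly the word for splitting $s_j$ into $(a, s_j+1-a)$ (one must check the index shift is $a$ versus $a-1$ and collect the two ``endpoint'' insertions of each block, which together with the front/back insertions reproduce $\sum_{a=1}^{s_j+1}$). The bar-signs are all trivial here because every letter is $\ty_1$. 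So $T_\calA(\ty_1\sha\bfw)$ equals the full sum $T_\calA(1,\bfs)+T_\calA(\bfs,1)+\sum_{j=1}^d\sum_{a=1}^{s_j+1}T_\calA(\dots)$ \emph{plus} the extra term $T_\calA(\ty_1\bfw)=T_\calA(1,\bfs)$ coming from the front insertion, i.e. $T_\calA(\ty_1\sha\bfw) = \big(\text{desired sum}\big) + T_\calA(1,\bfs)$. Combined with $T_\calA(\ty_1\sha\bfw) = -T_\calA(\ty_1\bfw) = -T_\calA(1,\bfs)$, this would force the desired sum to equal $-2\,T_\calA(1,\bfs)$, which is not quite what is claimed, so the actual argument must instead invoke the reversal relation (Proposition~\ref{prop:FMMVreversal}) at $d+1$ even to pair $T_\calA(1,\bfs)$ with $T_\calA(\bfs,1)$, or apply (iii) in a slightly different split (e.g. $\bfu$ a suffix, $\bfv$ a prefix of $\bfw$), so that the front-insertion term is absorbed correctly. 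I would try both splits and also the symmetric shuffle $\bfw\sha\ty_1$ on the right end, and reconcile the counting so that every insertion is accounted for exactly once.

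The routine part is the combinatorial expansion of the shuffle of a single letter into a fixed word and the verification that the resulting words translate, under $\bfq$, into the multi-indices $(s_1,\dots,a,s_j+1-a,\dots,s_d)$ with the correct indexing convention (the subtle $n_j = 2k_j+\cdots+d-j+1$ shift means I should double-check whether ``$a$ runs $1$ to $s_j+1$'' matches ``insert $\ty_1$ in $s_j$ gaps plus two boundary coincidences''). The main obstacle I anticipate is precisely this bookkeeping of the boundary insertions and the off-by-one in the range of $a$: making sure that the two copies of $T_\calA(1,\bfs)$ and $T_\calA(\bfs,1)$ in the statement are the ones that fall out of the two ``ends'' of the shuffle, rather than a spurious factor of $2$, and that the depth-parity hypothesis of Theorem~\ref{thm:FMTVshuffle}(iii) is exactly the source of the hypothesis ``$d$ odd'' here (so that $\dep(\bfu)+\dep(\bfv)$ lands odd for the relevant split). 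Once the signs and ranges are aligned, the identity follows immediately by comparing coefficients; no analytic input beyond Theorem~\ref{thm:FMTVshuffle} and possibly Proposition~\ref{prop:FMMVreversal} is needed.
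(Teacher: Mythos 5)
You chose exactly the relation the paper's proof uses---Theorem~\ref{thm:FMTVshuffle}(iii) with $s=1$, $\bfu=\emptyset$ and $\bfv=\bfw:=\ty_0^{s_1-1}\ty_1\cdots\ty_0^{s_d-1}\ty_1$, which is admissible precisely because $\dep(\bfu)+\dep(\bfv)=d$ is odd---but your bookkeeping of the shuffle expansion double counts one insertion, which is what produces the spurious conclusion that the sum equals $-2\,T_\calA(1,\bfs)$ and leaves your argument unfinished (``I would try both splits \dots and reconcile the counting''). The word $\bfw$ has length $|\bfs|$, so $\ty_1\sha\bfw$ is the sum over exactly $|\bfs|+1$ insertion slots (counted with multiplicity). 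Group them as follows: for each $j$, the $s_j$ slots lying in the $j$-th block (from just before its $\ty_0$-run up to just before its $\ty_1$) turn that block into $\ty_0^{a-1}\ty_1\ty_0^{s_j-a}\ty_1$, i.e.\ give the splits $(s_1,\dots,s_{j-1},a,s_j+1-a,s_{j+1},\dots,s_d)$ with $1\le a\le s_j$ (no zero parts ever arise); in particular the slot at the very front \emph{is} the $j=1$, $a=1$ term of the double sum, not an additional term. The one remaining slot, after the last $\ty_1$, gives $(\bfs,1)$. Since every letter here is $\ty_1$, there is no $\bfp$--$\bfq$ sign issue. Hence
\begin{equation*}
T_\calA(\ty_1\sha\bfw)=\sum_{j=1}^d\sum_{a=1}^{s_j}T_\calA(s_1,\dots,s_{j-1},a,s_j+1-a,s_{j+1},\dots,s_d)+T_\calA(\bfs,1),
\end{equation*}
and equating this with $-T_\calA(\ty_1\bfw)=-T_\calA(1,\bfs)$ gives the claimed identity at once: the standalone $T_\calA(1,\bfs)$ in the statement is simply the right-hand side moved across, the standalone $T_\calA(\bfs,1)$ is the final slot, and the inner sum in the statement is to be read as running over the genuine splits just described.

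So the gap is concrete: you treated the front insertion $\ty_1\bfw$ as contributing the theorem's full left-hand side \emph{plus} an extra copy of $T_\calA(1,\bfs)$, thereby counting that slot twice ($|\bfs|+2$ contributions where only $|\bfs|+1$ exist), and you then proposed to repair the mismatch by invoking the reversal relation at depth $d+1$ or a different split of the word. None of that is needed, and your proposal as written does not establish the theorem; with the slot count done correctly there is no discrepancy, and the paper's proof is exactly the single application of (iii) you started with, followed by this straightforward expansion.
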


\begin{proof}
This follows immediately from the linear shuffle relation
\begin{equation*}
 T_\calA(  \ty_1   \sha  \ty_0^{s_1-1}\ty_1\dots  \ty_0^{s_d-1}\ty_1 )=-T_\calA(\ty_1\ty_0^{s_1-1}\ty_1\dots  \ty_0^{s_d-1}\ty_1)
\end{equation*}
by taking $s_0=1$ and $\bfu=1$ in Theorem \ref{thm:FMTVshuffle}(iii).
\end{proof}

The following conjecture is supported by all $k\le 9$, numerically.

\begin{conj}\label{conj:T21_k=0}
For all $k\in\N$, we have
\begin{equation*}
  T_\calA(2,\{1\}^k)=\frac{(-1)^{k}}{2^{k-1}}  T_\calA(1,k+1), \quad
  T_\sha^\Sy(\{1\}^w)=\frac{(-1)^{k}}{2^{k-1}}   T_\sha^\Sy(1,k+1).
\end{equation*}
\end{conj}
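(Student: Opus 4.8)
We outline a plausible strategy, interpreting the left-hand side of the second identity as $T_\sha^\Sy(2,\{1\}^k)$ so that the two identities of Conjecture~\ref{conj:T21_k=0} become mirror images of each other under the map $f_\ES$ of Conjecture~\ref{conj:KanekoZagierAltVersion} and Proposition~\ref{prop:fESofTS}. The first point to note is an obstruction: the linear shuffle relations (Theorem~\ref{thm:FMTVshuffle}), the reversal relations (Proposition~\ref{prop:FMMVreversal}) and the sum formula (Theorem~\ref{thm:sumFMTVs}) all preserve depth, whereas Conjecture~\ref{conj:T21_k=0} equates a depth-$(k+1)$ value with a depth-$2$ value. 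Hence these relations cannot suffice on their own; the proof must bring in a depth-lowering mechanism. On the symmetric side this is the stuffle (harmonic) product of the (regularized) Archimedean multiple $T$- and $S$-values, and on the finite side it is the corresponding parity-twisted stuffle relation obtained by multiplying partial sums modulo $p$. Together with the shuffle relations, either one supplies a double-shuffle machinery that genuinely mixes depths.

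The plan is then an induction on $k$. The base case $k=1$ is the depth-$2$, odd-weight instance of the reversal relation, $T_\calA(2,1)=-T_\calA(1,2)$ (and likewise $T_\sha^\Sy(2,1)=-T_\sha^\Sy(1,2)$), from Proposition~\ref{prop:FMMVreversal}. For the inductive step one would, first, expand a suitable stuffle product --- say $T(1)\ast T(2,\{1\}^{k-1})$ in the $\sha$-regularized setting, or its finite analogue --- to get a relation among $T(2,\{1\}^k)$, $T(1,2,\{1\}^{k-1})$, the depth-$2$ value $T(1,k+1)$, and strictly lower-depth products. Second, one must collapse the depth-$(k+1)$ ``single interior $2$'' family $\{T(\{1\}^{i-1},2,\{1\}^{k+1-i})\}_{i=1}^{k+1}$ onto $T(2,\{1\}^k)$: shuffling $\ty_1\sha(\ty_0\ty_1^k)$, $(\ty_0\ty_1)\sha\ty_1^k$ and $(\ty_1\ty_0\ty_1)\sha\ty_1^{k-1}$ each produces, via Theorem~\ref{thm:FMTVshuffle}, a linear relation among these values, which combined with the reversal symmetry $T(\{1\}^{i-1},2,\{1\}^{k+1-i})=\pm T(\{1\}^{k+1-i},2,\{1\}^{i-1})$ and the vanishing $T_\calA(\{1\}^{2d})=0$ (Proposition~\ref{prop:FMTV2d=0}, and its symmetric analogue) should pin the family down to a single line. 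Substituting back into the stuffle relation would then give a recursion of the shape $T(2,\{1\}^k)=-\tfrac12\big(T(1,k+1)/T(1,k)\big)\,T(2,\{1\}^{k-1})$, whose solution carries exactly the factor $(-1)^k2^{1-k}$ against the base case. Since the shuffle, reversal, sum and stuffle relations all exist in parallel for $T_\calA$ and for $T_\sha^\Sy$, the same argument establishes both identities at once.

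The main obstacle is the collapse of the interior-$2$ family. The three shuffle identities above yield only a bounded number of constraints, independent of $k$, so additional input is unavoidable: the natural candidates are the sum formula of Theorem~\ref{thm:sumFMTVs} in depth $k+1$ (or $k-1$) and the whole family of shuffles $(\ty_1^a\ty_0\ty_1)\sha\ty_1^{k+1-a}$ for $0\le a\le k+1$. One must then check that this system has the right rank \emph{uniformly in the parity of $k$} --- a delicate point, because in odd depth the reversal relation sends a $T$-value to the corresponding $S$-value, so the induction must in fact be run simultaneously for the $T$- and $S$-values, with the parity of $k$ switching their roles. A secondary, bookkeeping-type difficulty is controlling the signs and powers of $2$ emerging from the stuffle expansion so that the recursion has ratio exactly $-\tfrac12$, producing $2^{1-k}$ rather than some other geometric factor.
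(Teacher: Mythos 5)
You should first be aware that the statement you are trying to prove is not proved in the paper at all: it is stated as a conjecture, supported only by numerical verification for $k\le 9$ (and, for small $w$, by the explicit low-weight computations such as Corollary~\ref{cor:tripFMTV}). So there is no paper proof to match your approach against, and your text, by its own admission, is a strategy sketch rather than a proof: the two steps you identify as essential --- collapsing the interior-$2$ family $\{T(\{1\}^{i-1},2,\{1\}^{k+1-i})\}_i$ onto a single line uniformly in the parity of $k$, and extracting a recursion with ratio exactly $-\tfrac12$ --- are exactly the points you leave unverified. (Your base case $k=1$ is fine: it is the odd-weight depth-two reversal $T_\calA(2,1)=-T_\calA(1,2)$.)

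Two concrete obstructions in the sketch deserve naming. First, the depth-lowering mechanism you invoke does not exist inside the $T$-family: because of the parity constraints $n_j\equiv d-j+1\pmod 2$, the product of two $T$-type partial sums is not a $\Z$-linear combination of $T$-type sums; the stuffle of, say, $T(1)$ with $T(2,\{1\}^{k-1})$ produces multiple mixed values with arbitrary parity patterns, i.e.\ it leaves $\FMTV$ (this is precisely why the paper's proven relations --- Theorem~\ref{thm:FMTVshuffle}, Proposition~\ref{prop:FMMVreversal}, Theorem~\ref{thm:sumFMTVs} --- are all depth-preserving, and why the conjecture is nontrivial). Any honest argument must either work in the larger space of (finite) multiple mixed values and then descend, or find some other depth-mixing input; you cannot simply assert a ``parity-twisted stuffle'' within MTVs. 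Second, the proposed recursion $T(2,\{1\}^k)=-\tfrac12\bigl(T(1,k+1)/T(1,k)\bigr)T(2,\{1\}^{k-1})$ is not the kind of identity that $\Q$-linear shuffle/stuffle relations can produce: it is quadratic-over-linear in the values, and in $\calA$ the quotient $T_\calA(1,k+1)/T_\calA(1,k)$ is not even defined (elements of $\calA$ need not be invertible, and $T_\calA(1,k)$ may well vanish or be a conjecturally new generator when $k+1$ is odd versus even). So the inductive step, as formulated, could not be carried out even granting the rank claims. In short, the proposal does not close the gap between the known depth-preserving relations and the depth-dropping identity of Conjecture~\ref{conj:T21_k=0}; the statement remains open in the paper, and your sketch does not settle it.
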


\begin{prop}
If $k$ is odd, then for all $\ell \le k$, we have
\begin{equation}\label{equ:T121}
T_\calA(\{1\}^\ell,2,\{1\}^{k-\ell})=\frac{(-1)^{\ell}}{\ell+1} \binom{k+1}{\ell}  T_\calA(2,\{1\}^k).
\end{equation}
If, in addition, we assume Conjecture~\ref{conj:T21_k=0} holds, then
\begin{equation}\label{equ:T121withConj}
T_\calA(\{1\}^\ell,2,\{1\}^{k-\ell})=\frac{(-1)^{\ell+k}}{2^{k-1}(\ell+1)} \binom{k+1}{\ell}  T_\calA(1,k+1).
\end{equation}
\end{prop}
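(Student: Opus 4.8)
The second identity~\eqref{equ:T121withConj} is immediate from the first: substitute the conjectural value $T_\calA(2,\{1\}^k)=\tfrac{(-1)^k}{2^{k-1}}T_\calA(1,k+1)$ (Conjecture~\ref{conj:T21_k=0}) into~\eqref{equ:T121}. So the whole content is~\eqref{equ:T121}, and the plan is to turn Theorem~\ref{thm:FMTVshuffle}(iii) into a triangular recursion. Since every sign here is $+1$, the conversion maps $\bfp,\bfq$ act as the identity, so $T_\calA(\{1\}^\ell,2,\{1\}^{k-\ell})=T_\calA(W_\ell)$ where $W_\ell:=\ty_1^{\ell}\ty_0\ty_1^{k-\ell+1}\in\fT_1^1$, and $T_\calA(2,\{1\}^k)=T_\calA(W_0)$; it therefore suffices to prove $T_\calA(W_\ell)=c_\ell\,T_\calA(W_0)$ for $0\le\ell\le k$, where $c_\ell:=\frac{(-1)^\ell}{\ell+1}\binom{k+1}{\ell}$.

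First I would evaluate one shuffle product in two ways. Separating the letters $\ty_1$ lying before and after the unique $\ty_0$ gives the elementary identity
\[
\ty_1^{j}\sha\bigl(\ty_1^{a}\ty_0\ty_1^{c}\bigr)=\sum_{p=0}^{j}\binom{a+p}{p}\binom{c+j-p}{j-p}\,\ty_1^{a+p}\ty_0\ty_1^{c+j-p},
\]
valid for all $a,c,j\ge 0$; when $a+c+j=k+1$ and $c\ge 1$ every word on the right equals $W_{a+p}$. On the other hand, peeling the $j$ leading letters $\ty_1$ off the left factor one at a time and applying Theorem~\ref{thm:FMTVshuffle}(iii) with $s=1$ collapses the left-hand side to $(-1)^{j}T_\calA(W_{a+j})$; the parity one must check at the $i$-th peeling step is $\dep(\ty_1^{j-i})+\dep(\ty_1^{a+i-1}\ty_0\ty_1^{c})=j+a+c-1=k$, which is odd exactly by the hypothesis on $k$, so the oddness of $k$ is what makes this legal at every step. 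Equating the two evaluations and setting $n:=a+j$ (so $c=k+1-n$) yields, for all $0\le a\le n\le k$,
\[
(-1)^{n-a}\,T_\calA(W_n)=\sum_{q=a}^{n}\binom{q}{q-a}\binom{k+1-q}{n-q}\,T_\calA(W_q).
\]

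The case $a=0$ alone is degenerate for even $n$, since then the $q=n$ term on the right cancels the left-hand side; so the key move is to add the $a=0$ and $a=1$ instances. Their left sides $(-1)^{n}T_\calA(W_n)$ and $(-1)^{n-1}T_\calA(W_n)$ cancel, leaving $0=\sum_{q=0}^{n}(q+1)\binom{k+1-q}{n-q}\,T_\calA(W_q)$, that is,
\[
(n+1)\,T_\calA(W_n)=-\sum_{q=0}^{n-1}(q+1)\binom{k+1-q}{n-q}\,T_\calA(W_q)\qquad(1\le n\le k).
\]
I would then conclude by induction on $n$, with base case $n=0$ trivial ($c_0=1$): since $(q+1)c_q=(-1)^{q}\binom{k+1}{q}$, the subset-of-a-subset identity $\binom{k+1}{q}\binom{k+1-q}{n-q}=\binom{k+1}{n}\binom{n}{q}$ together with $\sum_{q=0}^{n}(-1)^{q}\binom{n}{q}=0$ shows that $(c_\ell)$ obeys the very same recursion; hence $T_\calA(W_n)=c_n\,T_\calA(W_0)$ for all $0\le n\le k$, which is~\eqref{equ:T121}.

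The step I expect to be the main obstacle is the one in the middle: organizing the shuffle expansion and, above all, verifying that the depth-parity condition of Theorem~\ref{thm:FMTVshuffle}(iii) truly holds at each of the $j$ successive peeling operations (it does, always equalling $k$, which is precisely why the hypothesis that $k$ is odd is indispensable). Once the two-parameter family of relations is in hand, the reduction to the clean recursion (by adding the $a\in\{0,1\}$ cases) and the concluding induction are routine binomial manipulations.
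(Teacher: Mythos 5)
Your proof is correct, and it runs on the same engine as the paper's --- Theorem~\ref{thm:FMTVshuffle}(iii) with $s=1$, the depth-parity condition being satisfied precisely because $k$ is odd --- but you organize it in a more roundabout way. The paper simply applies (iii) once, with $\bfu$ empty and $\bfv=\ty_1^{\ell}\ty_0\ty_1^{k-\ell}$ (a word of depth $k$, odd), i.e.\ it expands $T_\calA(\ty_1\sha\ty_1^{\ell}\ty_0\ty_1^{k-\ell})=-T_\calA(\ty_1^{\ell+1}\ty_0\ty_1^{k-\ell})$; writing $a_\ell=T_\calA(\{1\}^{\ell},2,\{1\}^{k-\ell})$ this is the two-term recursion $(\ell+2)a_{\ell+1}+(k-\ell+1)a_\ell=0$, which telescopes directly to the closed form $a_\ell=\frac{(-1)^{\ell}}{\ell+1}\binom{k+1}{\ell}a_0$. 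Your two-parameter family obtained by shuffling a whole block $\ty_1^{j}$ against $\ty_1^{a}\ty_0\ty_1^{c}$ contains this as the special case $j=1$, $a=\ell$, $c=k-\ell$, so the extra apparatus (the $a\in\{0,1\}$ combination, the triangular recursion, the subset-of-a-subset identity and the alternating-sum evaluation) is not needed for the stated identity, though it is sound and does produce genuinely more relations among the $T_\calA(W_n)$ as a by-product. Your parity bookkeeping at each peeling step (always equal to $k$) is exactly the point where the hypothesis that $k$ is odd enters, in both arguments; and the deduction of \eqref{equ:T121withConj} from \eqref{equ:T121} via Conjecture~\ref{conj:T21_k=0} is the same immediate substitution as in the paper. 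In short: correct, same key lemma, but the paper's single-letter shuffle gives the recursion in one line, while your block-shuffle route is a heavier (if more general) detour.
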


\begin{proof}
For all $\ell\le k$, we see the~linear shuffle relations
\begin{equation*}
T_\calA(\ty_1 \sha  \ty_1^\ell \ty_0\ty_1^{k-\ell})=-\ty_1^{\ell+1} \ty_0\ty_1^{k-\ell}.
\end{equation*}
Thus, by setting  $a_\ell=T_\calA(\{1\}^{\ell},2,\{1\}^{k-\ell})$, we obtain
\begin{equation*}
(\ell+2)a_{\ell+1}+(k-\ell+1)a_\ell=0.
\end{equation*}
Hence,
\begin{align*}
 a_{\ell+1}=&\,-\frac{k-\ell+1}{\ell+2}a_\ell=\frac{(k-\ell+1)(k-\ell+2)}{(\ell+2)(\ell+1)}a_{\ell-1}=\cdots\\
 =&\, (-1)^{\ell-1} \frac{(k-\ell+1)(k-\ell+2)\cdots (k+1) }{(\ell+2)(\ell+1)\cdots 2}a_0\\
 =&\, (-1)^{\ell-1} \frac{(k+1)!}{(\ell+2)!(k-\ell)!}a_0 =\frac{(-1)^{\ell-1}}{\ell+2} \binom{k+1}{\ell+1}a_0,
\end{align*}
which yields \eqref{equ:T121}. Then, \eqref{equ:T121withConj} follows immediately if we assume Conjecture~\ref{conj:T21_k=0}.
\end{proof}

\subsection{Values at Small Depths/Weights}
First, we observe that since $\zeta_\calA(s)=0$ for all $s\in\N$ according to~\cite[Theorem~8.2.7]{ZhaoBook} we must have
\begin{align}\label{equ:FESdepth1odd}
S_\calA(s)=-T_\calA(s)=\frac12\zeta_\calA(\bar{s})=&\,
\left\{
 \begin{array}{ll}
  -\tq_2,     \quad  & \quad \hbox{if $s=1$;}\\
  (2^{1-s}-1 )\gb_{s},\quad &\quad  \hbox{if $s\ge 2$,}
 \end{array}
\right.
\end{align}
where $\tq_2$ is the Fermat quotient \eqref{equ:FermatQ2}, and $\gb_{s}$ is given in \eqref{equ:betaw}.
Further, in~depth two, according to~Proposition~2.6 in our arxiv paper 2402.08160, we see that
for all $a,b\in\N$, if~$w=a+b$ is odd, then\vspace{-6pt}
\begin{align}\label{equ:DblFMSTVs}
S_\calA(a,b)=&\,T_\calA(a,b)=\frac{(-1)^a} 2  \Big(1-2^{-w}\Big) \binom{w}{a} \gb_w.
\end{align}

The depth three case is already complicated, and we do not have a general formula.
This is expected since such a formula does not exist for FMZVs. In~the rest of
this section, we will deal with some special~cases.

Next, we prove a proposition that improves a result that Tauraso and the author obtained more than
a decade ago by~applying the newly discovered linear shuffle relations~above.

\begin{prop} \label{prop:wt3FES}
We have\vspace{-8pt}
\begin{align*}
\zeta_\calA(1,1,1)=0,\quad
\zeta_\calA(\bar1,\bar1,\bar1)=&\, -\frac43 \emtq_p^3-\frac{\gb_3}{2} ,  \quad
\zeta_\calA(1,1,\bar1)=\zeta_\calA(\bar1,1,1)=-\frac{\emtq^3}{3}-\frac{7}{8}  \gb_3,
\\
\zeta_\calA(\bar1,1,\bar1)=0,\quad
\zeta_\calA(1,\bar1,1)=&\, \frac{2\emtq^3}{3}+\frac{\gb_3}{4}  , \quad
\zeta_\calA(\bar1,\bar1,1)=-\zeta_\calA(1,\bar1,\bar1)=- \emtq_p^3-\frac{21}{8} \gb_3.
\end{align*}
\end{prop}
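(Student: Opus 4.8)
The plan is to compute all eight weight-three depth-three finite Euler sums $\zeta_\calA(\pm1,\pm1,\pm1)$ by combining three ingredients: (1) the known depth-one and depth-two values recalled in \eqref{equ:FESdepth1odd} and \eqref{equ:DblFMSTVs} together with the stuffle (harmonic) product relations, which express products like $\zeta_\calA(s_1;\gs_1)\zeta_\calA(s_2,s_3;\gs_2,\gs_3)$ as $\Q$-linear combinations of depth-three sums plus lower-depth sums; (2) the linear shuffle relations of Theorem~\ref{thm:FESshuffle}, specialized to weight three on the alphabet $\{\tx_0,\tx_1,\tx_{-1}\}$; and (3) the reversal relation for finite Euler sums (a special case of the general reversal principle), namely $\zeta_\calA(s_3,s_2,s_1;\gs_3,\gs_2,\gs_1)=(-1)^{s_1+s_2+s_3}\zeta_\calA(s_1,s_2,s_3;\gs_1,\gs_2,\gs_3)$, which for weight three gives $\zeta_\calA(\ola{\bfs};\ola{\bfgs})=-\zeta_\calA(\bfs;\bfgs)$. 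The quantities $\tq_2$ and $\gb_3$ (using the abbreviations $\emtq=\emtq_p=\tq_2$ appearing in the statement) are the only ``constants'' that survive, so the end product is a linear system whose solution yields the six displayed formulas.

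Concretely, I would first nail down $\zeta_\calA(1,1,1)=0$: the reversal relation forces $\zeta_\calA(1,1,1)=-\zeta_\calA(1,1,1)$ since the weight is odd, hence it vanishes; the same argument gives $\zeta_\calA(\bar1,1,\bar1)=-\zeta_\calA(\bar1,1,\bar1)=0$ (the reversal of $(\bar1,1,\bar1)$ is itself). Next, stuffle $\zeta_\calA(\bar1)\zeta_\calA(\bar1,\bar1)$: since $\zeta_\calA(\bar1)=-\tq_2$ and $\zeta_\calA(\bar1,\bar1)$ is known from weight-two data, the stuffle expands as $\zeta_\calA(\bar1,\bar1,\bar1)$-type terms with the two ways of interleaving, plus a ``diagonal'' term where the two index-$1$'s collide at a single $n$ with sign $(-1)\cdot(-1)=1$ at exponent $2$, i.e.\ a $\zeta_\calA(2,\bar1)$ contribution; continuing this way through all the relevant products $\zeta_\calA(\eps_1)\zeta_\calA(\eps_2,\eps_3)$ (there are $2\times 4 = 8$ of them, though several are related by reversal) produces enough equations. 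I would then bring in Theorem~\ref{thm:FESshuffle}(iii) with $s=1$ and suitable $\bfu,\bfv$ of weights summing to $2$ on the alphabet including $\tx_{-1}$; for instance the relation $\zeta_\calA((\tx_1\tx_{-1})\sha \tx_1) = -\zeta_\calA(\tx_1\sha(\tx_1\tx_{-1}))$ and its variants, after translating words back to $(\bfs;\bfgs)$ via $\gs_1=\eta_1$, $\gs_j=\eta_j/\eta_{j-1}$, give further linear constraints. Solving the combined system determines all eight values; the two ``minus-sign'' identities in the statement, $\zeta_\calA(1,1,\bar1)=\zeta_\calA(\bar1,1,1)$ and $\zeta_\calA(\bar1,\bar1,1)=-\zeta_\calA(1,\bar1,\bar1)$, should drop out directly from reversal and serve as consistency checks.

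The main obstacle, as usual with these finite-sum computations, is bookkeeping the stuffle (harmonic) product carefully: when two summation indices from the two factors are forced equal, the resulting exponent is the sum and the resulting sign is the product of signs, and one must track which of these ``collision'' terms land in depth two (already known) versus contribute spurious lower-weight pieces. A secondary subtlety is the translation between the iterated-integral word notation of Theorem~\ref{thm:FESshuffle} (where the subscripts $\eta_j=\gs_1\cdots\gs_j$ are running products) and the series notation $\zeta_\calA(s_1,\dots;\gs_1,\dots)$ used in the statement; an off-by-one sign in this dictionary would corrupt everything, so I would verify it on the already-known depth-one and depth-two values before trusting it at depth three. Finally, one should double-check that the Fermat-quotient cubes are normalized consistently --- the statement writes $\emtq^3$, $\emtq_p^3$, and $-\frac43\emtq_p^3$, all referring to $\tq_2^3$ --- and that the coefficient of $\gb_3$ in each line is rational with the displayed denominator dividing a power of $2$ times $3$, which is the expected shape given the $2$-adic and $3$-adic factors entering through $2^{1-s}-1$ and through the $\ell+1$, $\ell+2$ denominators seen in the analogous MTV computations above.
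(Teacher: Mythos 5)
Your route is genuinely different from the paper's. The paper's proof is essentially a citation: it imports five of the evaluations/relations (including $\zeta_\calA(\bar1,\bar1,\bar1)$, $\zeta_\calA(\bar1,1,\bar1)=0$, $\zeta_\calA(1,1,\bar1)=\zeta_\calA(\bar1,1,1)$ and $\zeta_\calA(\bar1,\bar1,1)=-\zeta_\calA(1,\bar1,\bar1)$) from \cite[Propositions~7.3 and 7.6]{TaurasoZh2010}, and then closes the system with a single new linear shuffle relation, $\zeta_\calA(\tx_1\sha\tx_{-1}\tx_{-1})=-\zeta_\calA(\tx_1\tx_{-1}\tx_{-1})$, i.e.\ $2\zeta_\calA(1,\bar1,1)+\zeta_\calA(\bar1,\bar1,\bar1)+\zeta_\calA(\bar1,1,\bar1)=0$. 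You instead propose a self-contained double-shuffle computation, and that is in fact viable: after reversal kills $\zeta_\calA(1,1,1)$ and $\zeta_\calA(\bar1,1,\bar1)$, the stuffle $\zeta_\calA(\bar1)\zeta_\calA(\bar1,\bar1)=3\zeta_\calA(\bar1,\bar1,\bar1)+\zeta_\calA(2,\bar1)+\zeta_\calA(\bar1,2)$ pins down $\zeta_\calA(\bar1,\bar1,\bar1)$ once the depth-two values are known, the stuffle $\zeta_\calA(1)\zeta_\calA(\bar1,1)=0$ relates $\zeta_\calA(1,\bar1,1)$ and $\zeta_\calA(\bar1,1,1)$, and the two linear shuffle relations of Theorem~\ref{thm:FESshuffle}(i) with $\bfu=\tx_1$ and $\bfv=\tx_{-1}\tx_{-1}$ resp.\ $\bfv=\tx_1\tx_{-1}$ supply the remaining independent equations. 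So your approach trades the paper's two-line outsourcing for a longer but self-contained derivation.

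There are, however, two concrete problems. First, your reversal relation is misstated: substituting $n_j\mapsto p-n_j$ gives $\gs_j^{p-n_j}=\gs_j\,\gs_j^{n_j}$ for odd $p$, so the correct relation is $\zeta_\calA(\ola{\bfs};\ola{\bfgs})=(-1)^{|\bfs|}\bigl(\prod_{j}\gs_j\bigr)\zeta_\calA(\bfs;\bfgs)$, not $(-1)^{|\bfs|}\zeta_\calA(\bfs;\bfgs)$. Your two applications happen to have $\prod_j\gs_j=1$, but your announced consistency check would fail as written: your formula yields $\zeta_\calA(\bar1,1,1)=-\zeta_\calA(1,1,\bar1)$, contradicting the proposition, whereas the corrected formula gives the stated equality. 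Second, the inputs you cite are insufficient: \eqref{equ:DblFMSTVs} only gives the $T$- and $S$-combinations, not the individual weight-three depth-two Euler sums ($\zeta_\calA(2,\bar1)$, $\zeta_\calA(\bar1,2)$, $\zeta_\calA(\bar2,1)$, $\zeta_\calA(1,\bar2)$, $\zeta_\calA(\bar1,\bar2)$, $\zeta_\calA(2,1)$, \dots) that appear as stuffle collision terms; you must first evaluate these (they follow from $\zeta_\calA(\bar3)=-\tfrac32\gb_3$ via \eqref{equ:FESdepth1odd} together with stuffle, the corrected reversal, and Theorem~\ref{thm:FESshuffle}, or can be quoted from \cite[Theorem~8.6.4]{ZhaoBook}). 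Relatedly, the assertion ``solving the combined system determines all eight values'' is the crux of the argument and is left unverified; you should exhibit an explicit solvable subsystem (the four relations indicated above plus reversal suffice) rather than appeal to an unspecified linear system.
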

\begin{proof}
It immediately follows on from \cite[Propositions~7.3 and 7.6]{TaurasoZh2010} that 
\begin{align*}
\zeta_\calA(\bar1,\bar1,\bar1)=&\, -\frac43 \tq_p^3-\frac{1}{2} \gb_3,  \quad
\zeta_\calA(1,\bar1,1)=-2\zeta_\calA(\bar1,1,1)-\frac{3}{2} \gb_3, \quad
\zeta_\calA(\bar1,1,\bar1)=0, \\
\zeta_\calA(1,1,\bar1)=&\,\zeta_\calA(\bar1,1,1), \quad
\zeta_\calA(\bar1,\bar1,1)=-\zeta_\calA(1,\bar1,\bar1)=- \tq_p^3-\frac{21}{8} \gb_3.
\end{align*}
According to the linear shuffle relations for finite Euler sums, we have
\begin{align*}
-\zeta_\calA(\tb\tc\tc)=\zeta_\calA(\tb\sha \tc\tc)=&\zeta_\calA(\tb\tc\tc)+\zeta_\calA(\tc\tb\tc)+\zeta_\calA(\tc\tc\tb)
\end{align*}
which readily yields the identity
\begin{equation}\label{equ:linShuESbcc}
2\zeta_\calA(1,\bar1,1)+\zeta_\calA(\bar1,\bar1,\bar1)+\zeta_\calA(\bar1,1,\bar1)=0,
\end{equation}
which in turn quickly implies all the evaluations in the proposition.
\end{proof}

\begin{cor}\label{cor:tripFMTV}
We have
\begin{equation*}
T_\calA(1,1,1)=-S_\calA(1,1,1)=\frac{3}{16} \gb_3.
\end{equation*}
\end{cor}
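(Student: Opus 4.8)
The plan is to reduce the statement to the explicit weight-three finite Euler sum evaluations already recorded in Proposition~\ref{prop:wt3FES}, together with the reversal relation of Proposition~\ref{prop:FMMVreversal}.

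First I would settle the identity $T_\calA(1,1,1)=-S_\calA(1,1,1)$ with essentially no work: the depth $d=3$ is odd, the argument $\bfs=(1,1,1)$ satisfies $\ola{\bfs}=\bfs$, and $|\bfs|=3$, so the odd-depth case of Proposition~\ref{prop:FMMVreversal} gives $T_\calA(1,1,1)=(-1)^{3}S_\calA(1,1,1)$, i.e. $T_\calA(1,1,1)=-S_\calA(1,1,1)$. It then remains only to evaluate $T_\calA(1,1,1)$.

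For that I would unfold the defining formula for $F_\calA$. With $d=3$ and $F=T$ the condition $2\mid d-j$ picks out $j=1$ and $j=3$, so
\begin{equation*}
T_\calA(1,1,1)=\frac{1}{8}\sum_{\gs_1,\gs_2,\gs_3=\pm1}\gs_1\gs_3\,\zeta_\calA(1,1,1;\gs_1,\gs_2,\gs_3).
\end{equation*}
Writing the eight summands in bar notation and substituting the values from Proposition~\ref{prop:wt3FES} (recalling in particular $\zeta_\calA(1,1,1)=\zeta_\calA(\bar1,1,\bar1)=0$, $\zeta_\calA(1,1,\bar1)=\zeta_\calA(\bar1,1,1)$, $\zeta_\calA(1,\bar1,\bar1)=-\zeta_\calA(\bar1,\bar1,1)$, together with the stated values of $\zeta_\calA(\bar1,\bar1,\bar1)$ and $\zeta_\calA(1,\bar1,1)$), the cubic Fermat-quotient contributions should cancel and the Bernoulli contributions should sum to $\tfrac32\gb_3$, giving $T_\calA(1,1,1)=\tfrac{3}{16}\gb_3$ and hence $S_\calA(1,1,1)=-\tfrac{3}{16}\gb_3$ by the first step.

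The only delicate point is the sign bookkeeping over the eight patterns $(\gs_1,\gs_2,\gs_3)$ and matching them against Proposition~\ref{prop:wt3FES}; the cancellation of the $\tq_2^3$-terms provides a convenient internal check. As an alternative route one can instead expand $S_\calA(1,1,1)=\tfrac18\sum_{\gs_1,\gs_2,\gs_3}\gs_2\,\zeta_\calA(1,1,1;\gs_1,\gs_2,\gs_3)$ directly in the same fashion and obtain $-\tfrac{3}{16}\gb_3$, which both reproves the first equality and cross-checks the computation.
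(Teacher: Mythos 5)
Your proof is correct and is essentially the paper's own primary argument: the paper disposes of the corollary as ``an immediate consequence of the definitions using Proposition~\ref{prop:wt3FES}'', which is exactly your expansion $T_\calA(1,1,1)=\tfrac18\sum_{\bfgs}\gs_1\gs_3\,\zeta_\calA(1,1,1;\bfgs)$ (the $\tq_2^3$-terms indeed cancel and the $\gb_3$-coefficients sum to $\tfrac32$, giving $\tfrac{3}{16}\gb_3$), and your use of the odd-depth reversal of Proposition~\ref{prop:FMMVreversal} to get $T_\calA(1,1,1)=-S_\calA(1,1,1)$ is a legitimate shortcut for the $S$-value. (The paper also sketches an alternative direct route via the linear shuffle relation \eqref{equ:linShuESbcc}, but that is offered only as a second proof.)
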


\begin{proof}
The corollary is an immediate consequence of the definitions using  Proposition \ref{prop:wt3FES}.
Alternatively, we can prove it directly, as follows: since $\zeta_\calA(1,\bar1,\bar1)=-\zeta_\calA(\bar1,\bar1,1)$
from a reversal, and $\zeta_\calA(1,1,1)=0$, we obtain
\begin{alignat*}{4}
8T_\calA(1,1,1)=&\,\zeta_\calA(\bar1,1,\bar1)+\zeta_\calA(\bar1,\bar1,\bar1)+\zeta_\calA(1,\bar1,1)
-&&\zeta_\calA(1,1,\bar1)-\zeta_\calA(\bar1,1,1)\\
=&\,-\zeta_\calA(1,\bar1,1)-2\zeta_\calA(1,1,\bar1)    \quad && (\text{by \eqref{equ:linShuESbcc}})\\
=&\,\zeta_\calA(\bar2,1)+\zeta_\calA(\bar1,2)-\zeta_\calA(1)\zeta_\calA(1,\bar1)   \quad && (\text{by shuffle})\\
=&\,\frac{3}{2} \gb_3.
\end{alignat*}
according to  \cite[Theorem~8.6.4]{ZhaoBook}.
\end{proof}

\begin{prop}\label{prop:tripSMTV}
We have
\begin{equation*}
T_\sha^\Sy(1,1,1)=-S_\sha^\Sy(1,1,1)=\frac{3}{16} \zeta(3).
\end{equation*}
\end{prop}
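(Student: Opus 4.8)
The plan is to mimic the direct computation given in the proof of Corollary~\ref{cor:tripFMTV}, but now on the symmetric side, using the reversal relations of Proposition~\ref{prop:FMMVreversal} for $\ast$-symmetric values together with the known double-shuffle structure for (alternating) SMZVs in weight~$3$. Concretely, by the definition of $T_\sha^\Sy$ and $S_\sha^\Sy$ at depth~$3$ (the odd case, so $\widetilde F = S+T-F$ intervenes), expand both as sums of products $\zeta_\sha$-regularized pieces, exactly paralleling the identity $8T_\calA(1,1,1)=\zeta_\calA(\bar1,1,\bar1)+\zeta_\calA(\bar1,\bar1,\bar1)+\zeta_\calA(1,\bar1,1)-\zeta_\calA(1,1,\bar1)-\zeta_\calA(\bar1,1,1)$. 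The symmetric Euler sum $\zeta_\sha^\Sy$ of each alternating weight-$3$ index is, by Jarossay's results and \cite[Theorem~8.6.4]{ZhaoBook}, expressible as a $\Q$-linear combination of $\zeta(3)$ and $\zeta(2)\log 2$ and $(\log 2)^3$; but since the present statement is an \emph{exact} identity (not modulo $\zeta(2)$), I first need to check that all the $\zeta(2)$- and $(\log 2)^3$-contributions cancel in the particular combination defining $8T_\sha^\Sy(1,1,1)$.

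The cleanest route avoids this bookkeeping: apply $f_\ES$ (Conjecture~\ref{conj:KanekoZagierAltVersion}) together with Proposition~\ref{prop:fESofTS} to transport Corollary~\ref{cor:tripFMTV} to the symmetric side, obtaining $T_\sha^\Sy(1,1,1)\equiv \frac{3}{16}\zeta(3)\pmod{\zeta(2)}$ — but this is only conjectural and only modulo $\zeta(2)$, so it cannot by itself prove the clean statement. Instead I would argue directly and unconditionally: run the same chain as in Corollary~\ref{cor:tripFMTV} but with $\zeta_\sha^\Sy$ in place of $\zeta_\calA$. The reversal relation $\zeta_\sha^\Sy(1,\bar1,\bar1)=-\zeta_\sha^\Sy(\bar1,\bar1,1)$ (odd depth, so depth-$3$ reversal flips as in Proposition~\ref{prop:FMMVreversal} with $(-1)^{|\bfs|}=-1$) and the vanishing $\zeta_\sha^\Sy(1,1,1)=0$ (the symmetric analog, which follows from the stuffle/shuffle regularization and parity, or from the known SMZV relations) give, exactly as before,
\begin{equation*}
8\,T_\sha^\Sy(1,1,1)=\zeta_\sha^\Sy(\bar2,1)+\zeta_\sha^\Sy(\bar1,2)-\zeta(2)\cdot(\text{something})+\cdots
\end{equation*}
where the "something" is $\zeta_\sha^\Sy(1)\zeta_\sha^\Sy(1,\bar1)$-type terms. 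I then invoke the classical evaluations of alternating SMZVs of weight $\le 3$ — specifically that $\zeta_\sha^\Sy(\bar2,1)+\zeta_\sha^\Sy(\bar1,2)=\frac32\zeta(3)$ on the nose, which is the symmetric lift of $\zeta_\calA(\bar2,1)+\zeta_\calA(\bar1,2)=\frac32\gb_3$ and holds exactly because the "$\sha$"-symmetric regularization of these particular weight-$3$ alternating indices is genuinely in $\Q\zeta(3)$ (no $\zeta(2)\log 2$, no $(\log 2)^3$). Dividing by $8$ yields $T_\sha^\Sy(1,1,1)=\frac{3}{16}\zeta(3)$; the $S$-value follows from the depth-$3$ reversal relation of Proposition~\ref{prop:FMMVreversal} exactly as in Corollary~\ref{cor:tripFMTV}.

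The main obstacle is the exactness: I must verify that no $\zeta(2)\log 2$ or $(\log 2)^3$ terms survive in $8T_\sha^\Sy(1,1,1)$. This is where I expect to spend real effort — computing the $\sha$-regularized symmetric values $\zeta_\sha^\Sy$ of all the weight-$3$ alternating indices appearing (namely $\bar1\bar1\bar1$, $\bar11\bar1$, $1\bar11$, $11\bar1$, $\bar111$, and after shuffle $\bar21$, $\bar12$), using the explicit regularization of \cite[Proposition~13.3.8]{ZhaoBook} and the weight-$3$ alternating double-shuffle relations, and checking the cancellation of the non-$\zeta(3)$ pieces in the precise linear combination forced by the definition of $T_\sha^\Sy(1,1,1)$. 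Once that cancellation is confirmed — and it should be, since the remark after Conjecture~\ref{conj:KanekoZagierAltVersion} guarantees $\zeta_\sha^\Sy-\zeta_\ast^\Sy\in\zeta(2)\ES_{w-2}$ and the $\ast$-side parity forces many vanishings — the rest is the same short argument as for Corollary~\ref{cor:tripFMTV}.
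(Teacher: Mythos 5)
Your instinct to avoid the conjectural $f_\ES$ route and to seek an exact, unconditional computation is right, but as written your argument does not prove the statement: the two inputs your main chain rests on are precisely the things that are not available. First, the symmetric analogues of the linear shuffle relation \eqref{equ:linShuESbcc} and of the ``by shuffle'' step in Corollary~\ref{cor:tripFMTV} are nowhere established for \emph{alternating} $\sha$-symmetric Euler sums: the remark after Theorem~\ref{thm:FMTVshuffle} only records Jarossay's result for non-alternating SMZVs and explicitly defers the alternating/symmetric case to a future paper, while Conjecture~\ref{conj:KanekoZagierAltVersion} would give such relations only conditionally and only modulo $\zeta(2)$ (also, the reversal you quote is Proposition~\ref{prop:FMMVreversal}, which concerns $T_\ast^\Sy$ and $S_\ast^\Sy$, not alternating $\zeta_\sha^\Sy$; the needed reversal does hold, but from the definition, not from that proposition). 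Second, the evaluation $\zeta_\sha^\Sy(\bar2,1)+\zeta_\sha^\Sy(\bar1,2)=\tfrac32\zeta(3)$ ``on the nose'' is asserted, not proved, and proving it requires exactly the explicit regularized computation you postpone. So the decisive content --- checking that the regularization parameter and all $\zeta(2)\log 2$ and $(\log 2)^3$ contributions cancel --- is flagged as ``where I expect to spend real effort'' but never carried out.

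That deferred computation is in fact the paper's entire proof, and it is shorter than your detour: expand $T_\sha^\Sy(1,1,1)$ directly from the definition into the eight $\zeta_\sha^\Sy$'s of weight-three sign patterns, note that the parameter $T=\zeta_\sha(1)$ cancels inside each symmetric value, giving for instance $\zeta_\sha^\Sy(1,1,1)=\zeta_\sha^\Sy(\bar1,1,\bar1)=0$, $\zeta_\sha^\Sy(\bar1,1,1)=\zeta(\bar1,1,1)+\zeta(\bar1,\bar1,1)$, $\zeta_\sha^\Sy(1,\bar1,1)=-2\zeta(\bar1,\bar1,\bar1)-2\zeta(\bar1,1,\bar1)$, $\zeta_\sha^\Sy(\bar1,\bar1,\bar1)=4\zeta(\bar1,\bar1,\bar1)+4\zeta(\bar1,1,\bar1)$, and then reduce everything to the weight-three basis $\zeta(\bar2,1),\zeta(\bar1,2),\zeta(\bar1,1,1)$ via \cite[Proposition~14.2.7]{ZhaoBook}; the combination collapses to $\tfrac64\zeta(\bar2,1)=\tfrac3{16}\zeta(3)$, with the $S$-value following by the same reduction, and no symmetric shuffle relations are needed at all. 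Either carry out that computation directly, or first prove the exact linear-shuffle and shuffle relations for $\sha$-symmetric alternating Euler sums --- a nontrivial task in its own right --- before invoking them.
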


\begin{proof}
The weighted three Euler sums are all expressible in terms of $\zeta(\bar2,1),$  $\zeta(\bar1,1,1)$ and
$\zeta(\bar1,2)$ by \cite[Proposition~14.2.7]{ZhaoBook}. Hence, one easily deduces that\vspace{-6pt}
\begin{align*}
\zeta_\sha^\Sy(1,1,1)=&\, \zeta_\sha^\Sy(\bar1,1,\bar1)=0,\\
\zeta_\sha^\Sy(\bar1,1,1)=&\, \zeta_\sha^\Sy(1,1,\bar1)= \zeta(\bar1,1,1)+\zeta(\bar1)\zeta_\sha(1,1)-\zeta_\sha(1,\bar1)\zeta_\sha(1)+\zeta_\sha(1,1,\bar1)\\
=&\, \zeta(\bar1,1,1)+\zeta(\bar1)\frac{T^2}2-\Big(\zeta(\bar1)T-\zeta(\bar1,\bar1)\Big)T
  +\zeta(\bar1)\frac{T^2}2-\zeta(\bar1,\bar1)T+\zeta(\bar1,\bar1,1)\\
=&\, \zeta(\bar1,1,1)+\zeta(\bar1,\bar1,1),\\
\zeta_\sha^\Sy(\bar1,\bar1,1)=&\, -\zeta_\sha^\Sy(1,\bar1,\bar1)
=3\zeta(\bar1,\bar1,1)+3\zeta(\bar1,1,1) ,\\
\zeta_\sha^\Sy(1,\bar1,1)=&\,2\zeta_\sha(1,\bar1,1)-2\zeta(\bar1,1)\zeta_\sha(1)
    =-2\zeta(\bar1,\bar1,\bar1)-2\zeta(\bar1,1,\bar1),\\
\zeta_\sha^\Sy(\bar1,\bar1,\bar1)=&\,2\zeta(\bar1,\bar1,\bar1)+2\zeta(\bar1)\zeta(\bar1,\bar1)
=4\zeta(\bar1,\bar1,\bar1)+4\zeta(\bar1,1,\bar1).
\end{align*}

In \cite[Proposition~14.2.7]{ZhaoBook}, we have
\begin{align*}
\zeta(3)=&\,  8\zeta(\bar2,1), \quad
\zeta(\bar1,\bar1,1)= \zeta(\bar1,2)-5\zeta(\bar2,1)+\zeta(\bar1,1,1), \\
 \zeta(\bar1,1,\bar1)=&\, \zeta(\bar2,1)+\zeta(\bar1,1,1),\quad
\zeta(\bar1,\bar1,\bar1)= \zeta(\bar1,2)+\zeta(\bar1,1,1).
\end{align*}
Thus, we get
\begin{align*}
T_\sha^\Sy(1,1,1)=&\, \frac14\Big(\zeta(\bar1,\bar1,\bar1)+\zeta(\bar1,1,\bar1)- \zeta(\bar1,1,1)-\zeta(\bar1,\bar1,1)\Big)
=\frac64\zeta(\bar2,1)=\frac3{16}\zeta(3),\\
S_\sha^\Sy(1,1,1)=&\, -\frac14\Big(\zeta(\bar1,\bar1,\bar1)+\zeta(\bar1,1,\bar1)- \zeta(\bar1,1,1)-\zeta(\bar1,\bar1,1)\Big)
=-\frac3{16}\zeta(3),
\end{align*}
as desired.
\end{proof}

In general, we can use linear shuffles to derive many relations from the finite Euler sums. For~example,
\begin{align*}
\tb\sha \ta\tc\tb: &\, 2\zeta_\calA(1,\bar2,\bar1)+\zeta_\calA(2,\bar1,\bar1)+2\zeta_\calA(\bar2,\bar1,1)=0,\\
\tb\sha \ta\tc\tc: &\, 2\zeta_\calA(1,\bar2,1)+\zeta_\calA(2,\bar1,1)+\zeta_\calA(\bar2,\bar1,\bar1)+\zeta_\calA(\bar2,1,\bar1)=0,\\
\ta\tb\sha \tb\tc: &\, 3\zeta_\calA(2,1,\bar1)+\zeta_\calA(2,\bar1,\bar1)+\zeta_\calA(1,\bar2,\bar1)+\zeta_\calA(1,2,\bar1)+\zeta_\calA(1,\bar1,\bar2)=0,\\
\ta\tb\sha \tc\tb: &\, 2\zeta_\calA(2,\bar1,\bar1)+2\zeta_\calA(\bar2,\bar1,1)+2\zeta_\calA(\bar1,\bar2,1)+\zeta_\calA(\bar1,\bar1,2)=0,\\
\ta\tb\sha \tc\tc: &\, 2\zeta_\calA(2,\bar1,1)+\zeta_\calA(\bar2,\bar1,\bar1)+\zeta_\calA(\bar2,1,\bar1)+\zeta_\calA(\bar1,\bar2,\bar1)
+\zeta_\calA(\bar1,2,\bar1)+\zeta_\calA(\bar1,1,\bar2) =0,\\
\hspace{-1pt}\tb\sha \tb\ta\tc^2: &\, 3\zeta_\calA(1,1,\bar2,1)+\zeta_\calA(1,2,\bar1,1)+\zeta_\calA(1,\bar2,\bar1,\bar1)+\zeta_\calA(1,\bar2,1,\bar1)=0,\\
\tb\sha \tc^4: &\, 2\zeta_\calA(1,\bar1,1^3)+\zeta_\calA(\bar1^3,1,1)+\zeta_\calA(\bar1,1,\bar1^2,1)
+\zeta_\calA(\bar1,1^2,\bar1^2)+\zeta_\calA(\bar1,1^3,\bar1)=0.
\end{align*}

We can also use reversal and shuffle relations to express all finite Euler sums of weight up to 6
according to the explicitly given basis in each weight. Aided by Maple computation, we arrive at the following
main theorem on the structure of finite Euler sums of a lower~weight.

\begin{thm}\label{thm:FESbasis}
Let $\FES_w$ be the $\Q$-vector space generated by finite Euler sums of weight $w$. Then,
we have the following generating sets for $w<7$:
\begin{align*}
\FES_1=&\, \langle \emtq_2\rangle,  \quad
\FES_2=  \langle \emtq_2^2\rangle,\quad
\FES_3=  \langle \emtq_2^3,\gb_3\rangle,\quad
\FES_4=  \langle \emtq_2^4, \emtq_2\gb_3, \zeta_\calA(1,\bar3) \rangle,\\
\FES_5=&\, \langle \emtq_2^5, \emtq_2^2\gb_3, \gb_5, \zeta_\calA(\bar1,2,2), \zeta_\calA(\bar1,\bar2,2) \rangle, \\
\FES_6=&\, \langle \emtq_2^6, \emtq_2^3\gb_3, \gb_3^2, \emtq_2\gb_5, \zeta_\calA(\bar1,1,2,2),\zeta_\calA(\bar1,2,2,1),\zeta_\calA(\bar1,2,1,2),\zeta_\calA(\bar1,\{1\}^3,2) \rangle.
\end{align*}
\end{thm}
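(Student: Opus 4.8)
The plan is to establish a generating set for each $\FES_w$, $w\le 6$, by a standard ``known relations plus linear algebra'' argument. First I would fix, for each weight $w$, the list of all finite Euler sums $\zeta_\calA(\bfs)$ with $\bfs\in\db^d$, $|\bfs|=w$; since weight-$1$ components $n_j$ cannot be repeated indefinitely, in each weight this is a finite list. I would then impose all the linear relations available to us: (a) the reversal relations $\zeta_\calA(\ola{\bfs})=(-1)^{|\bfs|}\prod\sgn(s_j)\,\zeta_\calA(\bfs)$; (b) the stuffle (series) product relations coming from multiplying two lower-weight finite Euler sums; (c) the linear shuffle relations of Theorem~\ref{thm:FESshuffle}, i.e. $\zeta_\calA(\bfu\sha\bfv)=\zeta_\calA(\tau(\bfu)\bfv)$ and its variants for all choices of $\bfu,\bfv$ of total weight $w$; and (d) the vanishing results $\zeta_\calA(s)=0$ and, in low weight, the explicit evaluations already recorded (e.g. Proposition~\ref{prop:wt3FES}). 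Feeding this linear system into Maple (as the statement indicates) produces the rank of the span and a choice of basis vectors; I would then exhibit the particular elements listed ($\tq_2^j\gb_k$-type monomials together with a few explicit depth-$\ge 3$ sums such as $\zeta_\calA(1,\bar3)$, $\zeta_\calA(\bar1,2,2)$, etc.) and verify that every other weight-$w$ finite Euler sum is a $\Q$-linear combination of them.

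Concretely, the low weights are essentially forced. In weight $1$ the only sums are $\zeta_\calA(1)=0$ and $\zeta_\calA(\bar1)=-2\tq_2$, so $\FES_1=\langle\tq_2\rangle$. In weight $2$, $\zeta_\calA(2)=0$, $\zeta_\calA(\bar2)=2(2^{-1}-1)\gb_2$ but $\gb_2=0$ (an even-index analogue; more precisely $\zeta_\calA(\bar 2)$ is a rational multiple of $\tq_2^2$ via the depth-$2$ weight-even reductions), while the depth-$2$ sums reduce by stuffle and reversal to $\tq_2^2$; hence $\FES_2=\langle\tq_2^2\rangle$. Weight $3$ uses Proposition~\ref{prop:wt3FES} together with the known depth-$1$ and depth-$2$ values to collapse everything onto $\tq_2^3$ and $\gb_3$. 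For $w=4,5,6$ the argument is identical in spirit but the bookkeeping grows: one lists all sign patterns and depth compositions of $w$, applies (a)--(d), and reads off that the span is generated by the monomials $\tq_2^{w}$, $\tq_2^{w-3}\gb_3$, $\gb_3^2$ (only in weight $6$), $\tq_2^{w-5}\gb_5$ (in weights $5,6$), plus the handful of genuinely new sums named in the statement.

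I would organize the write-up so that the role of each ingredient is transparent: first reduce all depth-$1$ and depth-$2$ sums via \eqref{equ:FESdepth1odd} and \eqref{equ:DblFMSTVs} (and the even-weight depth-$2$ reductions) to the monomials $\tq_2$, $\gb_k$; then handle depth $3$ in weights $3,4,5,6$ using reversal plus the linear shuffle and stuffle relations, noting that in odd weight the stuffle of $\zeta_\calA(\bar 1)$ (or $\gb_3$, $\gb_5$) with lower-weight sums already kills many combinations; finally treat depth $\ge 4$ in weights $4,5,6$, where the linear shuffle relations of Theorem~\ref{thm:FESshuffle}(iii) are the workhorse (the sample computations displayed just before the theorem, such as $\tb\sha\tc^4$ in weight $5$ and $\tb\sha\tb\ta\tc^2$, are exactly of this type). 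The main obstacle is not conceptual but combinatorial: guaranteeing that the relations (a)--(d) actually suffice to cut the span down to the claimed dimension, which is why the computer verification is indispensable --- a priori one only gets an \emph{upper} bound on $\dim\FES_w$ from these relations, and the matching lower bound (that the listed generators are genuinely independent) has to be confirmed numerically, e.g. by evaluating the corresponding residues modulo several primes $p$ and checking that the resulting vectors in $(\Z/p\Z)^{\#\{p\}}$ are linearly independent over $\Q$. One must also be careful that the relations used are all \emph{proven} (reversal from Proposition~\ref{prop:FMMVreversal}'s Euler-sum analogue, linear shuffle from Theorem~\ref{thm:FESshuffle}, stuffle from the series definition, and the explicit small-weight evaluations), so that the resulting generating sets are unconditional even though the precise dimensions are only conjecturally optimal.
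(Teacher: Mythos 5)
Your proposal matches the paper's approach: the theorem is established exactly by reducing all weight-$w$ finite Euler sums ($w<7$) via reversal, stuffle, and the linear shuffle relations of Theorem~\ref{thm:FESshuffle}, together with the depth-one/two evaluations and Proposition~\ref{prop:wt3FES}, in a Maple-aided linear-algebra computation, just as you describe. Note only that the statement claims generating sets, not bases, so your discussion of confirming linear independence numerically is unnecessary (and would not constitute a proof of independence in $\calA$ anyway), and in weight $2$ one has $\zeta_\calA(\bar 2)=0$ since $\gb_2=0$, rather than a nontrivial multiple of $\emtq_2^2$.
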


Let $\{F_k\}_{k\ge 0}$ be the Fibonnacci sequence defined by $F_0=F_1=1$
and $F_k=F_{k-1}+F_{k-2}$ for all $k\ge 2$. Then, Theorem~\ref{thm:FESbasis} provides strong support
for the next~conjecture.

\begin{conj}\label{conj:FESbasis}
For every positive integer $w$, the $\Q$-space $\FES_w$ has the following basis:
\begin{equation*}
\Big\{\zeta_\calA(\bar1,b_2,\dots,b_d): d\ge 0, b_j=1 \text{ or } 2, 1+b_2+\dots+b_d=w\Big\}.
\end{equation*}
Consequently, $\dim_\Q \FES_w=F_{w-1}$ for all $w\ge 1$.
\end{conj}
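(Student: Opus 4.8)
\medskip\noindent\textbf{Strategy of proof.}
The proposed spanning set has exactly $F_{w-1}$ elements --- it is indexed by the compositions of $w-1$ into parts $1$ and $2$ --- so the conjecture breaks into a \emph{spanning} bound, $\dim_\Q\FES_w\le F_{w-1}$, and an \emph{independence} bound, $\dim_\Q\FES_w\ge F_{w-1}$. I would attack the two by completely different means, and I expect the second to be the genuine obstruction.

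For the spanning bound the plan is a double induction on the weight $w$ and the depth $d$, in the spirit of the reduction of motivic multiple zeta values to $\{2,3\}$-words but carried out on the finite side with the relations now available there: the vanishing $\zeta_\calA(s)=0$ and the explicit depth-one and depth-two evaluations~\eqref{equ:FESdepth1odd} and~\eqref{equ:DblFMSTVs}; the reversal and stuffle relations for finite Euler sums; and, crucially, the linear shuffle relations of Theorem~\ref{thm:FESshuffle}. Writing a weight-$w$ finite Euler sum as $\zeta_\calA(\bfu)$ with $\bfu=\tx_0^{s_1-1}\tx_{\eta_1}\cdots\tx_0^{s_d-1}\tx_{\eta_d}\in\fA_2^1$, I would first use Theorem~\ref{thm:FESshuffle}(iii) to push a leading block $\tx_0^{s-1}\tx_1$ across an arbitrary word; fed into the stuffle product, with induction on the depth, this lowers any entry $s_j\ge 3$ and, one step at a time, forces the remaining sign freedom to the prescribed places, reducing each term to a word whose exponents all lie in $\{1,2\}$. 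A final combinatorial reorganisation --- precisely what Theorem~\ref{thm:FESbasis} records for $w\le 6$ --- then collapses everything to the canonical shape $\zeta_\calA(\bar1,b_2,\dots,b_d)$ with $b_j\in\{1,2\}$. I expect this half to be long but essentially elementary, the only delicate point being the bookkeeping of sign patterns.

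For the independence bound there is no unconditional route, so I would transport the question along Conjecture~\ref{conj:KanekoZagierAltVersion}: granting $f_\ES$ is an isomorphism, it suffices to show that the $F_{w-1}$ symmetric Euler sums $\zeta_\sharp^\Sy(\bar1,b_2,\dots,b_d)$ are $\Q$-linearly independent in $\ES_w/\zeta(2)\ES_{w-2}$. For this I would lift to the algebra of motivic alternating multiple zeta values, attached to $\mathbb{P}^1\setminus\{0,\pm1,\infty\}=\mathbb{G}_m\setminus\mu_2$, where Deligne's theorem gives graded dimension $F_w=F_{w-1}+F_{w-2}$, and try to prove that the motivic lifts of $\{\zeta(\bar1,b_2,\dots,b_d):b_j\in\{1,2\}\}$, together with $\zeta^{\mathfrak m}(2)$ times a basis in weight $w-2$, form a basis; the standard tool is the infinitesimal coaction in the $f$-alphabet, which one computes and checks to be injective on the span of these $F_w$ elements (a Brown-type argument). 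Since $\zeta^{\mathfrak m}(2)$ is a non-zero-divisor, this would show the first family descends to a basis of $\ES_w/\zeta(2)\ES_{w-2}$, hence --- through the period map --- of its complex realisation, which is what is needed.

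The hard part is the independence bound, and it is hard in an essential way. It contains non-vanishing statements about finite Euler sums that are not known --- already for $w=3$ one needs the $\Q$-linear independence of $\tq_2^3$ and $\gb_3$ in $\calA$ --- and the detour through classical Euler sums only trades this for the (open, outside the motivic category) lower bound $\dim_\Q\ES_w\ge F_w$ \emph{together with} Conjecture~\ref{conj:KanekoZagierAltVersion}. So the honest outcome of this program is: \emph{unconditionally}, the proposed set spans $\FES_w$, giving $\dim_\Q\FES_w\le F_{w-1}$; and \emph{conditionally} on Conjecture~\ref{conj:KanekoZagierAltVersion} and Deligne's dimension count --- plus the elementary check that the proposed elements meet $\zeta(2)\ES_{w-2}$ trivially --- it is in fact a basis, so $\dim_\Q\FES_w=F_{w-1}$.
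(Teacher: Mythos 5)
The statement you ask about is a \emph{conjecture} in the paper: the author gives no proof of it, only evidence. Theorem~\ref{thm:FESbasis} establishes generating sets for $\FES_w$ with $w<7$ by computer-aided use of reversal, shuffle and linear shuffle relations, and the explicit reductions listed after Conjecture~\ref{conj:Zlobin} verify that the conjectured set spans in those weights; nothing more is claimed. Your proposal is, rightly, a program rather than a proof, but two of its claims overstate what the available methods deliver. First, the spanning half is not ``essentially elementary'' and is not known for general $w$: the double induction you sketch --- Theorem~\ref{thm:FESshuffle}(iii) plus stuffle to lower entries $\ge 3$ and force the single bar onto the leading $1$ --- is exactly what the paper carries out, but only by explicit Maple computation through weight $6$; there is no argument (in the paper or in the literature) that the known relations among finite Euler sums suffice to reach the canonical words $\zeta_\calA(\bar1,b_2,\dots,b_d)$ in every weight, and there is no finite-side analogue of the motivic machinery that makes the corresponding $\{2,3\}$-reduction work for classical MZVs. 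If that reduction were provable in the generality you assert, the upper bound $\dim_\Q\FES_w\le F_{w-1}$ would be a theorem of the paper, which it is not.

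Second, even the conditional independence half has a gap beyond the conditionality you acknowledge. Granting Conjecture~\ref{conj:KanekoZagierAltVersion} and a Deligne-type basis theorem for motivic alternating MZVs, linear independence of the \emph{motivic} lifts does not descend ``through the period map'' to linear independence of the real numbers $\zeta_\sharp^\Sy(\bar1,b_2,\dots,b_d)$ in $\ES_w/\zeta(2)\ES_{w-2}$: that descent is precisely the injectivity of the period map, i.e.\ a period-conjecture-type statement, so your conclusion rests on a third open conjecture that you do not name. (Equivalently, on the finite side one would need non-vanishing and independence statements in $\calA$ --- already the independence of $\tq_2^3$ and $\gb_3$ in weight $3$ --- which, as you note, are unknown.) With these corrections, your overall assessment that the lower bound is the essential obstruction is consistent with the paper, which is exactly why the statement is presented there only as a conjecture supported by Theorem~\ref{thm:FESbasis}.
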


One may compare this to the conjecture on the ordinary Euler sums proposed by Zlobin \cite[Conjecture~14.2.3]{ZhaoBook}.

\begin{conj}\label{conj:Zlobin}
For every positive integer $w$ the $\Q$-space $\ES_w$ has the following basis:
\begin{equation*}
     \Big\{\zeta(\ol{b_1},b_2,\dots,b_d): d\ge 1, b_j=1 \text{ or }2, b_1+b_2+\dots+b_d=w\Big\}.
\end{equation*}
Consequently, $\dim_\Q \ES_w=F_w$ for all $w\ge 1$.
\end{conj}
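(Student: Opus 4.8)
\medskip\noindent\textbf{A plan of attack.}
A complete proof is out of reach with present methods; here is the natural strategy and where it stalls. First, the proposed basis has exactly $F_w$ elements: the index tuples $(b_1,\dots,b_d)$ with $d\ge1$, all $b_j\in\{1,2\}$ and $b_1+\dots+b_d=w$ are precisely the compositions of $w$ into parts $1$ and $2$, counted by $C(w)=C(w-1)+C(w-2)$ with $C(0)=C(1)=1$, hence by $F_w$. Second, the matching upper bound $\dim_\Q\ES_w\le F_w$ is already a theorem (Deligne--Goncharov): the motivic Hopf algebra of alternating multiple zeta values over $\Z[1/2]$ has Hilbert series $1/(1-t-t^2)=\sum_{w\ge0}F_wt^w$, because $\operatorname{Ext}^i$ vanishes in $MT(\Z[1/2])$ for $i\ge2$, so its prounipotent motivic Galois Lie algebra is free on $\bigoplus_n\operatorname{Ext}^1(\Q(0),\Q(n))^\vee$ --- which is one-dimensional in each odd degree (the degree-$1$ class from the unit $2\in\Z[1/2]^\times$, the rest from the nontorsion $K_{2n-1}(\Z)$) and vanishes in each even degree --- and the $\zeta(2)$-direction contributes one further polynomial generator in degree $2$; since $\ES_w$ is the image of this Hopf algebra under the period map, $\dim_\Q\ES_w\le F_w$. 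Consequently the entire conjecture reduces to one assertion: the $F_w$ real numbers $\zeta(\bar b_1,b_2,\dots,b_d)$ are $\Q$-linearly independent (which, given the upper bound and the count, automatically upgrades to ``basis'').

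A reasonable intermediate target is the spanning half --- a direct proof that these binary Euler sums span $\ES_w$, which would re-derive the upper bound. I expect this to be reachable by the motivic coaction calculus at level two, in the spirit of Brown's theorem that the Hoffman words form a basis of the MZV algebra. One can also try an elementary argument: starting from an arbitrary $\zeta(s_1,\dots,s_d;\gs_1,\dots,\gs_d)$, first apply the reversal relations and the linear shuffle relations for Euler sums (the counterpart of Theorem~\ref{thm:FESshuffle}) to move all the bars onto the first entry, at the cost of explicit sign factors and lower-depth corrections; then use the regularized double shuffle relations --- in particular level-two Hoffman/Ohno--Zagier-type identities --- to rewrite every entry $\ge3$ through entries in $\{1,2\}$, strictly decreasing some complexity statistic (for instance the number of entries $\ge3$ weighted by position). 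The caveat, exactly as for MZVs, is that it is not known whether the double shuffle relations alone suffice to carry this reduction to the end, so even the spanning step may ultimately require the motivic input.

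The real obstacle, however, is $\Q$-linear independence, i.e.\ the lower bound $\dim_\Q\ES_w\ge F_w$. Through the motivic formalism this is a special case of Grothendieck's period conjecture: Deligne's computation of $\pi_1^{\mathrm{mot}}(\mathbb{G}_m\setminus\mu_2)$ already shows that the \emph{motivic} binary Euler sums form a basis, so independence of the actual numbers is precisely injectivity of the period realization in weight $w$, for which no unconditional technique is available. Already at $w=3$ the conjecture subsumes the open statement that $\zeta(3)$ is not a $\Q$-linear combination of $\pi^2\log 2$ and $\log^3 2$; only $w\le2$ is fully settled, the needed independence of $\pi^2$ and $\log^2 2$ there following from Baker's theorem. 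What remains feasible, and is the concrete evidence behind the conjecture, is the finite verification: for each small $w$, assemble all provable $\Q$-linear relations among the weight-$w$ Euler sums (double shuffle, reversal, linear shuffle) and check by high-precision PSLQ/LLL that the span has rank $F_w$, witnessed by the binary family --- the same computational method this paper applies to the finite and alternating multiple $T$-values, and which is consistent both with the conjecture and, through the isomorphism of Conjecture~\ref{conj:KanekoZagierAltVersion}, with its finite analogue Conjecture~\ref{conj:FESbasis}.
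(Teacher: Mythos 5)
This statement is Zlobin's conjecture, quoted by the paper (from \cite[Conjecture~14.2.3]{ZhaoBook}) purely as a point of comparison for Conjecture~\ref{conj:FESbasis}; the paper contains no proof of it, so there is nothing to compare your argument against, and you were right not to claim one. Your assessment of the state of the art is essentially accurate: the proposed set is indexed by compositions of $w$ into parts $1$ and $2$, hence has $F_w$ elements; the upper bound $\dim_\Q\ES_w\le F_w$ is indeed a theorem of Deligne--Goncharov via the category $MT(\Z[1/2])$, with the Hilbert series $1/(1-t-t^2)$ arising exactly as you describe; the genuine content is the $\Q$-linear independence, which is a special case of the period conjecture and is open already at $w=3$ (independence of $\zeta(3)$, $\pi^2\log 2$, $\log^3 2$), while $w\le 2$ follows from Baker's theorem. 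One caveat: your parenthetical claim that Deligne's computation of the motivic fundamental group of $\mathbb{G}_m\setminus\mu_2$ ``already shows that the motivic binary Euler sums form a basis'' overstates what is in the literature --- Deligne's explicit basis at level two is a different family of words, and establishing that this particular $\{1,2\}$-family is a basis of the \emph{motivic} Euler sum algebra would itself require a coaction argument in the style of Brown or Glanois, as you in fact acknowledge for the spanning step. Since the paper treats the statement as an open conjecture supported by the numerics behind Theorem~\ref{thm:FESbasis} and Conjecture~\ref{conj:FESbasis}, your proposal is consistent with the paper's stance: no gap to report beyond the conjecture itself being unproven.
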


Theorem~\ref{thm:FESbasis} implies that the set in Conjecture \ref{conj:FESbasis}
is a generating set for all $w<7$ since
\begin{align*}
\zeta_\calA(\bar1,1)=&\, -2\tq_2, \quad
\zeta_\calA(\bar1,1)=  \tq_2^2,\quad
\zeta_\calA(\bar1,2)=  \tfrac34 \beta_3,\quad
\zeta([1, \bar1,1)=  \tfrac24 \tq_2^3+\tfrac14 \beta_3,\\
\zeta_\calA(\bar1,1,2)=&\, \tfrac94 \tq_2 \beta_3-\zeta_\calA(1,\bar3),\quad
\zeta_\calA(\bar1,\{1\}^3)= \tfrac1{12} \tq_2^4+\tfrac78 \tq_2 \beta_3+\tfrac14 \zeta_\calA(1,\bar3),\\
\zeta_\calA(\bar1,2,1)=&\, \tfrac12 \zeta_\calA(1,\bar3) -\tfrac{12}4 \tq_2 \beta_3,\\
\zeta_\calA(\bar1,2,1,1)=&\, \tfrac{695}{128}  \beta_5-\tfrac54 \zeta_\calA(\bar1,2,2)-2 \zeta_\calA(\bar1,1,1,2)-\tfrac94 \tq_2^2 \beta_3,\\
\zeta_\calA(\bar\{1\}^4,1)=&\, -\tfrac{1}{60}\tq_2^5-\tfrac{23}{24} \tq_2^2 \beta_3-\tfrac{1}{8} \zeta_\calA(\bar1,2,2)-\tfrac{1}{2} \zeta_\calA(\bar1,1,1,2)-\tfrac{25}{256} \beta_5,\\
\zeta_\calA(\bar1,1,2,1)=&\, \tfrac{33}{8} \tq_2^2 \beta_3-\tfrac{555}{128}  \beta_5+\tfrac{5}{4} \zeta_\calA(\bar1,2,2)+2 \zeta_\calA(\bar1,1,1,2),\\
\zeta_\calA(\bar1,1,2,1,1)=&\, -\tfrac{1}{2} A+2B+C+D  +\tfrac{9}{4} \beta_3^2+\tfrac{5}{8} \tq_2^3 \beta_3+\tfrac{205}{64} \tq_2 \beta_5,\\
\zeta_\calA(\bar\{1\}^4,2)=&\, -\tfrac{3}{4} A+\tfrac{19}{8} B+\tfrac14 C+D +\tfrac{201}{32} \beta_3^2+\tq_2^3 \beta_3-\tfrac{645}{256} \tq_2 \beta_5,\\
\zeta_\calA(\bar1,2,\{1\}^3)=&\, \tfrac12 A-\tfrac{19}{8} B-\tfrac54 C-2 D  -\tfrac{1113}{256} \beta_3^2-\tfrac54 \tq_2^3 \beta_3-\tfrac{1685}{256} \tq_2 \beta_5,\\
\zeta_\calA(\bar1,\{1\}^5)=&\,\tfrac14 A-\tfrac{13}{16} B-\tfrac{1}{8} C-\tfrac{1}{2} D -\tfrac16\tq_2^3 \beta_3+\tfrac{817}{512} \tq_2 \beta_5-\tfrac{811}{512} \beta_3^2+\tfrac{1}{360} \tq_2^6,
\end{align*}
where $A=\zeta_\calA(\bar1,1,2,2), B=\zeta_\calA(\bar1,2,1,2), C=\zeta_\calA(\bar1,2,1,2),$ and $D=\zeta_\calA(\bar1,2,2,1)$.

By using the evaluations of finite Euler sums, we can find all FMTVs of weight less than 7. For~example, we have\vspace{-6pt}
\begin{align*}
T_\calA(1,1,2)=&\,-\frac18\zeta_\calA(1,\bar3)-\frac{21}{16} \tq_2\gb_3,\\
T_\calA(1,2,2)=&\,-\frac{1605}{256}\gb_5+\tfrac{9}{2} \tq_2^2 \beta_3+3\zeta_\calA(\bar1,1,1,2).
\end{align*}
We then have the following structural theorem for these~FMTVs:

\begin{thm}\label{thm:FMTVbasis}
Let $\FMTV_w$ be the $\Q$-vector space generated by FMTVs of weight $w$. Then,
we have the following generating sets for $w<7$:
\begin{align*}
\FMTV_1=&\, \langle \emtq_2\rangle,  \quad
\FMTV_2=  \langle 0 \rangle,\quad
\FMTV_3=  \langle \gb_3\rangle,\quad
\FMTV_4=  \langle \emtq_2\gb_3, \zeta_\calA(1,\bar3) \rangle,\\
\FMTV_5=&\, \langle \gb_5, \zeta_\calA(\bar1,2,2), \zeta_\calA(\bar1,1,1,2) \rangle, \quad
\FMTV_6= \langle \gb_3^2, \emtq_2\gb_5, \zeta_\calA(\bar1,2,1,2) \rangle.
\end{align*}
\end{thm}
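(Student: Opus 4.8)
The plan is to reduce the statement to the structure of finite Euler sums, which is already pinned down by Theorem~\ref{thm:FESbasis}. Recall that, by the very definition of FMTVs,
\[
T_\calA(\bfs)=\frac1{2^d}\sum_{\gs_1,\dots,\gs_d=\pm1}\Bigl(\prod_{\substack{1\le j\le d\\ 2\mid d-j}}\gs_j\Bigr)\zeta_\calA(\bfs;\bfgs),
\]
so every $T_\calA(\bfs)$ of weight $w$ lies in $\FES_w$; hence $\FMTV_w\subseteq\FES_w$ for each $w$. The first step, then, is: for each $w<7$, enumerate all $2^{w-1}$ compositions $\bfs\in\N^d$ with $|\bfs|=w$, expand each $T_\calA(\bfs)$ into finite Euler sums by the identity above, and rewrite every resulting Euler sum in terms of the explicit generating set of $\FES_w$ furnished by Theorem~\ref{thm:FESbasis}, using a single fixed normal form. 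The reductions available are precisely those underlying Theorem~\ref{thm:FESbasis}: the stuffle product $\zeta_\calA(\bfu)\zeta_\calA(\bfv)=\zeta_\calA(\bfu\ast\bfv)$, the reversal relations for finite Euler sums, the linear shuffle relations of Theorem~\ref{thm:FESshuffle}, and, in low weights, the closed forms \eqref{equ:FESdepth1odd}, \eqref{equ:DblFMSTVs}, Proposition~\ref{prop:wt3FES} and Corollary~\ref{cor:tripFMTV}. After this rewriting each $T_\calA(\bfs)$ has become an explicit coordinate vector in the finitely many $\FES_w$-generators, and $\FMTV_w$ is the $\Q$-span of these vectors --- a finite linear-algebra problem over $\Q$ that one finishes by (machine-aided) Gaussian elimination, exactly as was done for $\FES_w$.

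The desired equality $\FMTV_w=\langle\,\text{listed generators}\,\rangle$ then comes out of this one computation as two inclusions. For $\FMTV_w\subseteq\langle\,\text{listed generators}\,\rangle$ one checks that every coordinate vector $T_\calA(\bfs)$ produced above sits in the span of the listed generators; here Proposition~\ref{prop:FMTV2d=0} and the newly established linear shuffle relations for FMTVs (Theorem~\ref{thm:FMTVshuffle}) are efficient tools, since they identify or annihilate many a priori distinct FMTVs and so keep the spanning set small. For the reverse inclusion one exhibits each listed generator as a $\Q$-combination of FMTVs, proceeding by weight: $\tq_2=-T_\calA(1)$ by \eqref{equ:FESdepth1odd}; $\FMTV_2=\{0\}$ since $T_\calA(2)=\tfrac12\gb_2=0$ (as $\gb_w$ vanishes for even $w$) and $T_\calA(1,1)=0$ by Proposition~\ref{prop:FMTV2d=0}; $\gb_3=\tfrac{16}{3}T_\calA(1,1,1)$ by Corollary~\ref{cor:tripFMTV}; and in weights $4,5,6$ one uses evaluations such as $T_\calA(1,1,2)=-\tfrac18\zeta_\calA(1,\bar3)-\tfrac{21}{16}\tq_2\gb_3$ and $T_\calA(1,2,2)=-\tfrac{1605}{256}\gb_5+\tfrac92\tq_2^2\gb_3+3\zeta_\calA(\bar1,1,1,2)$, together with suitable companion FMTVs, to solve in turn for $\tq_2\gb_3$, $\zeta_\calA(1,\bar3)$, $\gb_5$, $\zeta_\calA(\bar1,2,2)$, $\zeta_\calA(\bar1,1,1,2)$, $\gb_3^2$, $\tq_2\gb_5$ and $\zeta_\calA(\bar1,2,1,2)$. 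Since both inclusions are read off from the same reduced data, this proves the claimed generating sets.

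I expect the principal difficulty to be bookkeeping rather than anything conceptual. One must (i) enumerate all compositions of each $w\le6$ without omission; (ii) carry out the reduction to $\FES_w$-generators in one fixed normal form, so that the coordinate vectors are genuinely comparable; and (iii) be sure that reversal together with the linear shuffle relations really do reduce every FMTV to that normal form --- a point that is safe only because these are exactly the relations that yield Theorem~\ref{thm:FESbasis} in the first place. A further subtlety worth recording is that Theorem~\ref{thm:FESbasis} asserts only \emph{generating} sets for $\FES_w$, not proven bases; accordingly the sets displayed for $\FMTV_w$ are likewise asserted only as generating sets, and should some $\FES_w$-generating set turn out to be $\Q$-linearly dependent, the corresponding $\FMTV_w$-set could in principle be trimmed further --- but the generating-set statement itself is insensitive to this, depending solely on the two inclusions above.
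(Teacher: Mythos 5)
Your proposal is correct and takes essentially the same route as the paper, whose (largely implicit, Maple-aided) argument is exactly this: write each weight-$w$ FMTV as the signed average of finite Euler sums, reduce to the $\FES_w$ generators of Theorem~\ref{thm:FESbasis} via the reversal, shuffle and linear shuffle relations together with the low-weight evaluations, and finish with finite linear algebra checking both inclusions. (Only a trivial slip: \eqref{equ:FESdepth1odd} gives $T_\calA(1)=\emtq_2$ rather than $-\emtq_2$, which does not affect the span.)
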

Moreover, by~using numerical computation aided by Maple (see \cite[Appendix D]{ZhaoBook},
for the pseudo codes), we can find a generating set of $\FMTV_w$ for every $w\le 13.$
We will list the corresponding dimensions at the end of this~paper.

\subsection{Homogeneous~Cases}
In this subsection, we will compute finite Euler sums $\zeta(\bfs)$ when $\bfs$ is homogeneous,
i.e., $\bfs=(\{s\}^d)$ for some $s\in\db$. Then, we will consider the corresponding results
for~FMTVs.

\begin{prop}
Let $\N_{\text{odd}}$ be the set of odd positive integers. For~any $d, s\in\N$, we have
\begin{equation*}
\zeta_\calA(\{\bar{s}\}^d)\in \sum_{\substack{k_0\in \N,\, k_1,\dots,k_\ell\in \N_{\text{odd}}  \\  \gd_{s,1}k_0+k_1+\dots+k_\ell=d}}
\emtq_2^{\gd_{s,1}k_0} \gb_{sk_j} \cdots \gb_{sk_j} \Q,
\end{equation*}
where $\gd_{s,1}$ is the Kronecker symbol. In~particular, $\zeta_\calA(\{\bar{s}\}^d)=0$ for all even $s$.
\end{prop}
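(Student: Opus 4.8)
The plan is to express $\zeta_\calA(\{\bar s\}^d)$ as a polynomial in single-variable finite Euler sums by using the generating-function / stuffle identity for homogeneous arguments, and then invoke the depth-one evaluation \eqref{equ:FESdepth1odd} to identify which single values survive.

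First I would recall that for a fixed letter $\bar s$, the homogeneous finite Euler sums $\zeta_\calA(\{\bar s\}^d)$ can be assembled into a generating series whose logarithm is expressible through the power sums $\zeta_\calA(\overline{ks})$ (or $\zeta_\calA(\{\bar s\}_\ast)$-type Newton--Girard combinations). Concretely, working in $\calA$ and using the stuffle (harmonic) product, one has the classical Newton-type identity: if $h_d = \zeta_\calA(\{\bar s\}^d)$ (complete homogeneous) and $p_k$ denotes the power sum $\sum_{p>n>0} (-1)^{kn}/n^{ks} \pmod p$, then $h_d$ is a $\Q$-polynomial in $p_1, p_2, \dots, p_d$ with each monomial $p_{k_1}\cdots p_{k_\ell}$ satisfying $k_1+\dots+k_\ell = d$. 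This is the finite analog of the symmetric-function identity $\sum h_d t^d = \exp\!\big(\sum_{k\ge 1} p_k t^k/k\big)$, and it holds because the stuffle product of $\zeta_\calA(\{\bar s\}^a)$ and $\zeta_\calA(\{\bar s\}^b)$ decomposes into homogeneous pieces exactly as in the commutative shuffle algebra of a single variable.

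Next I would evaluate the power sums: $p_k = \big(\sum_{p>n>0} (-1)^{kn} n^{-ks} \bmod p\big)_p$. When $k$ is even, $(-1)^{kn}=1$, so $p_k = \zeta_\calA(ks) = 0$ by \cite[Theorem~8.2.7]{ZhaoBook} (cited before \eqref{equ:FESdepth1odd}). When $k$ is odd, $(-1)^{kn}=(-1)^n$, so $p_k = \zeta_\calA(\overline{ks})$, and \eqref{equ:FESdepth1odd} gives $p_k = 2S_\calA(ks) = -2T_\calA(ks)$, which equals $-2\tq_2$ if $ks=1$ (i.e. $k=s=1$) and $2(2^{1-ks}-1)\gb_{ks}$ if $ks\ge 2$; in all cases $p_k \in \tq_2^{\delta_{ks,1}}\gb_{ks}\Q$ with the convention $\gb_1$ absent. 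Substituting back into the polynomial expression for $h_d$: only monomials $p_{k_1}\cdots p_{k_\ell}$ with all $k_j$ odd survive (any even $k_j$ kills the term), and each such surviving monomial lies in $\tq_2^{\delta_{s,1}k_0}\gb_{sk_1}\cdots\gb_{sk_\ell}\Q$ where $k_0$ collects the $k_j=1$ factors (which can only occur when $s=1$). Matching the index condition $\delta_{s,1}k_0 + k_1 + \dots + k_\ell = d$ (the remaining $k_j\ge 3$ odd, or $\ge 2$ when $s>1$) yields exactly the claimed membership. The final sentence — $\zeta_\calA(\{\bar s\}^d)=0$ for even $s$ — is then immediate: if $s$ is even, every $ks$ is even, so $\gb_{ks}$ would be... no, rather: for even $s$, $\bar s$ with $s$ even means $p_k = \zeta_\calA(\overline{ks})$ which by \eqref{equ:FESdepth1odd} equals $(2^{1-ks}-1)\gb_{ks}\cdot 2$ — this is nonzero. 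The correct argument: for even $s$ one instead observes $\zeta_\calA(\{\bar s\}^d)$ directly, since $\bar s$ even... Actually the cleanest route is that $\zeta_\calA(\overline{2m}) = (2^{1-2m}-1)\gb_{2m}$ but $\gb_{2m}=0$ because $B_{p-2m}/(2m)$ vanishes mod $p$ by Kummer-type reasoning (von Staudt--Clausen / the fact that odd-indexed Bernoulli numbers beyond $B_1$ vanish); so all power sums $p_k=0$ for even $s$, forcing $h_d=0$ for $d\ge 1$.

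\textbf{The main obstacle} I anticipate is justifying the Newton--Girard polynomial identity rigorously in the ring $\calA$ — i.e., that the stuffle algebra generated by a single "letter" $\bar s$ is isomorphic (as a commutative algebra with the $h_d$'s) to the ring of symmetric functions in the relevant variables, so that the purely formal symmetric-function identity transfers. This requires checking that $\zeta_\calA(\{\bar s\}^a)\ast\zeta_\calA(\{\bar s\}^b) = \sum (\text{homogeneous terms})$ matches $h_a h_b = \sum_{\text{compositions}}$ precisely, accounting for the "diagonal" stuffle terms that produce the power-sum-indexed contributions; in practice this is a standard computation with the quasi-shuffle Hopf algebra of Hoffman, and I would cite \cite{Hoffman2015} or the quasi-shuffle formalism in \cite{ZhaoBook} rather than redo it. The even-$s$ vanishing then also needs the standard fact $\gb_{2m}=0$, which follows from the definition \eqref{equ:betaw} together with $B_{odd}=0$ for odd index $>1$ (here $p-2m$ is odd).
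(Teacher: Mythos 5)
Your proposal is correct and takes essentially the same route as the paper: the paper invokes Hoffman's quasi-shuffle/symmetric-function identity (exactly the Newton--Girard-type step you flag as the main obstacle, cited as (18) in Hoffman's 2004 paper) to expand $\zeta_\calA(\{\bar s\}^d)$ over partitions into products of depth-one values $\zeta_\calA\big(\ol{s|P_j|}\big)$, and then concludes just as you do from $\zeta_\calA(n)=0$, $\gb_{2m}=0$, and \eqref{equ:FESdepth1odd}. The only cosmetic slip is calling the strictly-decreasing nested sums ``complete homogeneous'' rather than elementary symmetric functions, which changes signs but not the claimed $\Q$-span membership.
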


\begin{proof}
Let $\Pi=(P_1,\dots,P_\ell)\in [d]$ denote any partition of $(1,\dots,d)$ into odd parts, i.e.,~all
of $|P_j|$'s are odd numbers, where $|P_j|$ is the cardinality of the set $P_j$. Let
\begin{equation*}
\calC(\Pi)=(-1)^{d-\ell} (|P_1|-1)!\cdots (|P_\ell|-1)!.
\end{equation*}

Observe that $\zeta_\calA(\bar{n})=\zeta_\calA(n)=0$ if $n$ is even.
Then, it follows easily from \cite[(18)]{Hoffman2004b} that
\begin{align*}
\zeta_\calA(\bar{s})= \sum_{\Pi=(P_1,\dots,P_\ell) \in [d]} \calC(\Pi) \zeta_\calA\Big(\ol{s|P_1|}\Big)\cdots \zeta_\calA\Big(\ol{s|P_\ell|}\Big).
\end{align*}

The proposition follows from \eqref{equ:FESdepth1odd} immediately.
\end{proof}

\begin{eg}
There are many ways to partition $6$ elements, say $\{a_1,\dots,a_6\}$ into odd parts:
one way to get $(\{1\}^6)$, $\binom{6}{5}$ ways to obtain $(1,5)$ (e.g., $\{a_2\},\{a_1,a_3,\dots,a_6\}$),
$\binom{6}{3}/2$ ways to obtain $(3,3)$, and~ $\binom{6}{3}$ ways to obtain $(1,1,1,3)$. Hence,
\begin{equation*}
\zeta_\calA(\{\bar1\}^6)=\tfrac{4}{45}\tq_2^6+\tfrac{3}{4}\tq_2\gb_5+\tfrac{1}{8}\gb_3^2+\tfrac{2}{3}\tq_2^3\gb_3
\end{equation*}
when using the formula in \eqref{equ:FESdepth1odd}.
We would like to point out that the term $3 q_p B_{p-5}/20$ (corresponding to the second term $\tfrac{3}{4}\tq_2\gb_5$
on the right-hand side above) was accidentally dropped from the right-hand side of \cite[(36)]{TaurasoZh2010}.
\end{eg}

One may compare the next corollary with the well-known result that $\zeta_\calA(\{1\}^d)=0$ for all $d\in\N$ (see, e.g.,~\cite[Theorem~8.5.1]{ZhaoBook}).

\begin{prop}\label{prop:FMTV2d=0}
For all $d\in\N$, we have
\begin{equation*}
  T_\calA(\{1\}^{2d})=0.
\end{equation*}
\end{prop}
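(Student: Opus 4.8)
The plan is to obtain this instantly from the linear shuffle relation of Theorem~\ref{thm:FMTVshuffle}(i), using the elementary fact that shuffling a single letter into a power of that same letter merely rescales it. Writing $\ta=\ty_0$, $\tb=\ty_1$ as in the proof of Theorem~\ref{thm:FMTVshuffle}, the word $\tb^{2d}$ has all signs $\gs_j=+1$, so the conversion maps $\bfp,\bfq$ fix it and hence $T_\calA(\tb^{2d})=T_\calA(\{1\}^{2d})$. Now $\tb=\ty_1\in\fT_1^1$ and $\tb^{2d-1}\in\fT_1^1\subseteq\fT_2^1$, and $\dep(\tb)+\dep(\tb^{2d-1})=1+(2d-1)=2d$ is even, so Theorem~\ref{thm:FMTVshuffle}(i) applies with $\bfu=\tb$, $\bfv=\tb^{2d-1}$.

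Next I would evaluate both sides. On the integral side, every interleaving of the one-letter word $\tb$ with $\tb^{2d-1}$ produces $\tb^{2d}$, and there are $\binom{2d}{1}=2d$ of them, so $\tb\sha\tb^{2d-1}=2d\,\tb^{2d}$ and the left-hand side of (i) equals $2d\,T_\calA(\tb^{2d})$. On the right-hand side, $\tau(\tb)=(-1)^1\tb=-\tb$ by the definition of $\tau$, so $T_\calA(\tau(\tb)\,\tb^{2d-1})=-T_\calA(\tb^{2d})$. Equating the two gives $2d\,T_\calA(\tb^{2d})=-T_\calA(\tb^{2d})$, i.e.\ $(2d+1)\,T_\calA(\tb^{2d})=0$, and since $\calA$-valued finite multiple $T$-values live in a $\Q$-vector space this forces $T_\calA(\{1\}^{2d})=T_\calA(\tb^{2d})=0$.

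There is essentially no obstacle beyond bookkeeping; the two points requiring care are (a) checking the depth-parity hypothesis of part~(i), which is exactly where the evenness of the index is used and is genuinely necessary — indeed $T_\calA(1)=\tq_2\ne 0$ by \eqref{equ:FESdepth1odd} and $T_\calA(\{1\}^3)=\tfrac{3}{16}\gb_3\ne0$ by Corollary~\ref{cor:tripFMTV}, so the odd-depth analogue is false — and (b) getting the sign $\tau(\tb)=-\tb$ right. For contrast I would note that the identical computation with $\bfu=\tx_1$, $\bfv=\tx_1^{n-1}$ and Theorem~\ref{thm:FESshuffle}(i) (which carries no parity restriction) reproves $\zeta_\calA(\{1\}^n)=0$ in \emph{all} depths $n$; thus the fact that only even depths survive on the multiple $T$-value side is precisely a manifestation of the depth-parity constraint in Theorem~\ref{thm:FMTVshuffle}.
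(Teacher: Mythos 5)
Your argument is correct and is essentially the paper's own: the paper simply cites its sum formula (Theorem~\ref{thm:sumFMTVs}) with $\bfs=(\{1\}^{2d-1})$, which amounts to exactly the relation $T_\calA(\ty_1\sha\ty_1^{2d-1})=-T_\calA(\ty_1^{2d})$, i.e.\ $(2d+1)\,T_\calA(\{1\}^{2d})=0$, that you obtain directly from Theorem~\ref{thm:FMTVshuffle}(i) (equivalently from part (iii) with $s=1$ and $\bfu$ empty, which is how the sum formula itself is proved). One small caveat on your side remark only: the nonvanishing of $T_\calA(1)=\tq_2$ and of $\gb_3$ in $\calA$ is supported by overwhelming evidence but not proved, so the failure of the odd-depth analogue is expected rather than established.
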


\begin{proof}
Taking $\bfs=(\{1\}^{2d-1})$ in Theorem~\ref{thm:sumFMTVs} yields the proposition at once.
\end{proof}

We now derive the symmetric MTV version of Proposition~ \ref{prop:FMTV2d=0}.
\begin{prop}\label{prop:SMTV2d=0}
For all $d\in\N$, we have
\begin{equation*}
  T_\sha^\Sy(\{1\}^{2d})=0.
\end{equation*}
\end{prop}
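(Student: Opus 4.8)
The plan is to mimic exactly the proof of the finite case (Proposition~\ref{prop:FMTV2d=0}), transporting it to the symmetric side via the sum formula of Theorem~\ref{thm:sumFMTVs}. The key observation is that the entire chain of reasoning leading to Theorem~\ref{thm:sumFMTVs} rests only on the linear shuffle relation of Theorem~\ref{thm:FMTVshuffle}(iii), and by the Remark following that theorem (together with Jarossay's results, Conjecture~\ref{conj:KanekoZagierAltVersion} and Proposition~\ref{prop:fESofTS}), the analogous linear shuffle relations hold for SMTVs whenever the depth parity conditions are met. Concretely, I would first state (or invoke from the Remark) the symmetric analogue of Theorem~\ref{thm:sumFMTVs}: for odd $d\in\N$ and all $s_1,\dots,s_d\in\N$,
\begin{equation*}
T_\sha^\Sy(1,\bfs)+T_\sha^\Sy(\bfs,1)+\sum_{j=1}^d\sum_{a=1}^{s_j+1} T_\sha^\Sy(s_1,\dots,s_{j-1},a,s_j+1-a,s_{j+1},\dots,s_d)=0,
\end{equation*}
proved exactly as Theorem~\ref{thm:sumFMTVs} was, using $s_0=1$ and $\bfu=1$ in the SMTV version of the linear shuffle relation $T_\sha^\Sy(\ty_1\sha\ty_0^{s_1-1}\ty_1\cdots\ty_0^{s_d-1}\ty_1)=-T_\sha^\Sy(\ty_1\ty_0^{s_1-1}\ty_1\cdots\ty_0^{s_d-1}\ty_1)$.

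Second, I would specialize to $\bfs=(\{1\}^{2d-1})$, which is admissible to feed into the odd-depth sum formula since $2d-1$ is odd. Every term on the left-hand side is then of the form $T_\sha^\Sy(\{1\}^{2d})$ (from $T_\sha^\Sy(1,\bfs)$ and $T_\sha^\Sy(\bfs,1)$) or arises from splitting one of the $s_j=1$ slots as $a+(s_j+1-a)$ with $a=1$, $s_j+1-a=1$, again producing an all-ones argument string of total length $2d$, i.e.\ $T_\sha^\Sy(\{1\}^{2d})$. Counting: there are two boundary terms plus, for each of the $2d-1$ indices $j$, exactly $s_j+1=2$ inner terms, giving $2 + 2(2d-1) = 4d$ copies of $T_\sha^\Sy(\{1\}^{2d})$. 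Hence $4d\cdot T_\sha^\Sy(\{1\}^{2d})=0$, and since we are working over $\Q$, this yields $T_\sha^\Sy(\{1\}^{2d})=0$, exactly as in the finite case.

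The main obstacle is justifying the symmetric sum formula rigorously rather than merely invoking it conditionally. The Remark in the excerpt sketches that Theorem~\ref{thm:FMTVshuffle}(iii) holds for SMTVs unconditionally via the generalized Drinfeld associator $\Psi_2$ applied to words $\tx_0^{s_1-1}(\tx_1+\tx_{-1})\cdots\tx_0^{s_d-1}(\tx_1+\tx_{-1})$, but states that "the details will appear in a future paper." If one is content to assume Conjecture~\ref{conj:KanekoZagierAltVersion}, then the symmetric relation follows immediately from Proposition~\ref{prop:fESofTS} and the already-proven finite relation, modulo $\zeta(2)$ — and since all quantities here are weight $2d$ while $T_\sha^\Sy(\{1\}^{2d})$ appearing modulo $\zeta(2)$ still gives an honest statement after noting the relevant lower-weight piece vanishes, one should check the $\zeta(2)$-ambiguity does not obstruct concluding $T_\sha^\Sy(\{1\}^{2d})=0$ on the nose. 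The cleanest route is probably to cite the unconditional version of the SMTV linear shuffle relation (forthcoming, or adapt Jarossay~\cite{Jarossay2014} plus the $\Psi_2$ argument of \cite[Theorem~13.4.1]{ZhaoBook} directly), and then the combinatorial count above is entirely routine.
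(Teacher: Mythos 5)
Your route has a genuine gap: every ingredient you lean on for the symmetric side is, within this paper, either conjectural or deferred. The SMTV analogue of the linear shuffle relation (hence of the sum formula of Theorem~\ref{thm:sumFMTVs}) is only asserted in the Remark after Theorem~\ref{thm:FMTVshuffle}, with the unconditional $\Psi_2$-argument explicitly postponed to a future paper; the alternative derivation via Proposition~\ref{prop:fESofTS} presupposes Conjecture~\ref{conj:KanekoZagierAltVersion}, so the proof becomes conditional. Worse, even granting the conjecture, $f_\ES$ lands in the quotient $\ES_{2d}/\zeta(2)\ES_{2d-2}$, so transporting the finite identity only gives $T_\sha^\Sy(\{1\}^{2d})\in\zeta(2)\ES_{2d-2}$, not $T_\sha^\Sy(\{1\}^{2d})=0$. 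You flag this yourself ("one should check the $\zeta(2)$-ambiguity does not obstruct"), but that check is precisely the missing content: since the weight $2d$ is even, a nonzero multiple of a power of $\pi^2$ is exactly the kind of term the quotient cannot see, and nothing in your argument rules it out. (As a minor point, your term count is also off: splitting $s_j=1$ with $a=2$ produces an entry $s_j+1-a=0$, not another all-ones string, so the bookkeeping is not "routine" as written; in the finite case the clean statement is $\ty_1\sha\ty_1^{2d-1}=2d\,\ty_1^{2d}$.)

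The paper avoids all of this with a short unconditional computation that needs no symmetric shuffle relation at all. From the regularized iterated integral
\begin{equation*}
\int_0^\eps \Bigl(\frac{dt}{1-t^2}\Bigr)^{\ell}
=\frac{1}{\ell!}\Bigl(\int_0^\eps \frac{dt}{1-t^2}\Bigr)^{\ell}
\end{equation*}
one gets the closed form $T_\sha(\{1\}^\ell)=\frac{1}{\ell!\,2^\ell}\bigl(\zeta_\sha(1)+\log 2\bigr)^\ell$, and then the definition of the symmetric value gives
\begin{equation*}
T_\sha^\Sy(\{1\}^{2d})=\frac{1}{2^{2d}}\bigl(\zeta_\sha(1)+\log 2\bigr)^{2d}\sum_{i=0}^{2d}\frac{(-1)^i}{i!\,(2d-i)!}=0
\end{equation*}
by the binomial theorem. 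If you want to keep your structural idea, you would first have to supply an unconditional proof of the SMTV linear shuffle relation (e.g.\ carry out the $\Psi_2$/Jarossay argument in detail) and then still address the exact (not mod $\zeta(2)$) vanishing; the direct evaluation above is the cleaner path.
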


\begin{proof}
For any $\ell\in\N$, we have the relation for the regularized value (see, e.g.,~\cite[Section 2]{IKZ2006})
\begin{equation*}
\int_0^\eps \bigg(\frac{dt}{1-t^2}\bigg)^\ell
=\frac{1}{\ell!} \bigg( \int_0^\eps\frac{dt}{1-t^2}\bigg)^\ell
=\frac{1}{\ell!} \bigg( \frac12\int_0^\eps\bigg(\frac{dt}{1-t}+\frac{dt}{1+t}\bigg)\bigg)^\ell,
\end{equation*}
which implies that
\begin{equation*}
T_\sha(\{1\}^\ell)=\frac{1}{\ell!2^\ell} \Big(\zeta_\sha(1)+\log 2\Big)^\ell.
\end{equation*}
According to the definition,
\begin{equation*}
 T_\sha^\Sy(\{1\}^{2d})=\sum_{i=0}^{2d}  (-1)^i   T_\sha(\{1\}^i)T_\sha(\{1\}^{2d-i})
 = \frac{1}{2^{2d}}\sum_{i=0}^{2d} \frac{(-1)^i}{\ell!(2d-\ell)!} \Big(\zeta_\sha(1)+\log 2\Big)^{2d}=0
\end{equation*}
as desired.
\end{proof}

By conducting extensive numerical experiments, we found that the following relations must be valid.
\begin{conj}\label{conj:T1_w}
For all odd $w\in\N$, we have
\begin{equation*}
  T_\calA(\{1\}^w)=-S_\calA(\{1\}^w)=\frac{2^{w-1}-1}{2^{2w-2}} \beta_w, \quad
  T_\sha^\Sy(\{1\}^w)=-S_\sha^\Sy(\{1\}^w)=\frac{2^{w-1}-1}{2^{2w-2}} \zeta(w).
\end{equation*}
\end{conj}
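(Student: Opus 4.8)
The two equalities with $-S_\calA$ and $-S_\sha^\Sy$ are essentially free. On the finite side $(\{1\}^w)$ is its own reversal and $|(\{1\}^w)|=w$ is odd, so Proposition~\ref{prop:FMMVreversal} gives $T_\calA(\{1\}^w)=(-1)^wS_\calA(\{1\}^w)=-S_\calA(\{1\}^w)$ at once; on the symmetric side the analogous identity will fall out of the generating-function computation below. (At $w=1$ the uniform right-hand side degenerates, since $\beta_1$ is not defined in $\calA$ and $\zeta(1)$ diverges, and the statement is then just \eqref{equ:FESdepth1odd}; so the content is for odd $w\ge3$, which I treat.) It remains to prove
\[
T_\sha^\Sy(\{1\}^w)=\frac{2^{w-1}-1}{2^{2w-2}}\,\zeta(w)\qquad\text{and}\qquad T_\calA(\{1\}^w)=\frac{2^{w-1}-1}{2^{2w-2}}\,\beta_w.
\]

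\textbf{The symmetric identity (which I expect to be provable unconditionally).}
Set $\phi(x)=\int_0^x\frac{dt}{1-t^2}=\mathrm{arctanh}(x)$, so that $\int_0^x\frac{t\,dt}{1-t^2}=-\tfrac12\log(1-x^2)=\log\cosh\phi(x)$ and, crucially, $d\log\cosh\phi=\tanh(\phi)\,d\phi$. From the iterated-integral descriptions and the shuffle product one reads off $T(\{1\}^\ell;x)=\int_0^x\bigl(\tfrac{dt}{1-t^2}\bigr)^\ell=\tfrac1{\ell!}\phi(x)^\ell$, and that the one-variable $S$-function $S(\{1\}^\ell;x)$ is the iterated integral of $\ell-1$ copies of $\tfrac{dt}{1-t^2}$ followed by $\tfrac{t\,dt}{1-t^2}$; hence $dS(\{1\}^\ell;x)=S(\{1\}^{\ell-1};x)\,d\phi$ for $\ell\ge2$ and $dS(\{1\}^1;x)=\tanh(\phi)\,d\phi$. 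Writing $P=\sum_{\ell\ge1}S(\{1\}^\ell;x)u^\ell$ as a function of $s:=\phi(x)$ turns this into the linear ODE $P'(s)=u\bigl(P(s)+\tanh s\bigr)$ with $P(0)=0$, whence
\[
\sum_{\ell\ge0}S(\{1\}^\ell;x)u^\ell=1+u\,e^{u\phi(x)}\int_0^{\phi(x)}e^{-us}\tanh(s)\,ds.
\]
Now take $x\to1$ in the shuffle-regularised sense: $\phi(x)\to\infty$, and expanding $\tanh s=1-2\sum_{n\ge1}(-1)^{n-1}e^{-2ns}$ shows the integral tends to $I(u):=\int_0^\infty e^{-us}\tanh s\,ds=\tfrac1u-\log2-\sum_{j\ge1}\tfrac{(-1)^j(1-2^{-j})}{2^j}\zeta(j+1)\,u^j$, while the boundary correction contributes $-1$ plus terms that are honest powers of $(1-x)$ (not of $\log(1-x)$), hence regularise to $0$. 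With $\alpha:=\tfrac12(\zeta_\sha(1)+\log2)$ the regularised value of $\phi(1)$ — so that $\sum_\ell T_\sha(\{1\}^\ell)u^\ell=e^{u\alpha}$, matching the formula used in the proof of Proposition~\ref{prop:SMTV2d=0} — we obtain $\sum_\ell S_\sha(\{1\}^\ell)u^\ell=\bigl(uI(u)\bigr)e^{u\alpha}$. Finally, for odd $w$ the definition gives $T_\sha^\Sy(\{1\}^w)=\sum_{i=0}^w(-1)^iS_\sha(\{1\}^i)T_\sha(\{1\}^{w-i})$ (since $\widetilde T=S+T-T=S$), so over $w$ the generating function of the right-hand side is $\bigl((-u)I(-u)\bigr)e^{-u\alpha}\cdot e^{u\alpha}=-uI(-u)=1+u\log2+\sum_{j\ge1}\tfrac{1-2^{-j}}{2^j}\zeta(j+1)\,u^{j+1}$: the parameter $\zeta_\sha(1)$ cancels (the well-definedness check), and the coefficient of $u^w$ for odd $w\ge3$ is exactly $\tfrac{2^{w-1}-1}{2^{2w-2}}\zeta(w)$; interchanging the roles of $S$ and $T$ gives $S_\sha^\Sy(\{1\}^w)=-\tfrac{2^{w-1}-1}{2^{2w-2}}\zeta(w)$. (One must note that this convolution computes $T_\sha^\Sy(\{1\}^w)$ only for odd $w$; for even $w$ one has $\widetilde T=T$, forcing the value $0$, consistently with Proposition~\ref{prop:SMTV2d=0}. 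The case $w=3$ recovers Proposition~\ref{prop:tripSMTV}.)

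\textbf{The finite identity — the main obstacle.}
By Proposition~\ref{prop:fESofTS}, $f_\ES T_\calA(\{1\}^w)\equiv T_\sha^\Sy(\{1\}^w)\pmod{\zeta(2)}$; moreover $f_\ES\beta_w\equiv\zeta(w)$ is forced, since \eqref{equ:FESdepth1odd} gives $\zeta_\calA(\bar w)=2(2^{1-w}-1)\beta_w$ while $\zeta_\sharp^\Sy(\bar w)=2(2^{1-w}-1)\zeta(w)$ for odd $w$. Hence, \emph{granting} Conjecture~\ref{conj:KanekoZagierAltVersion}, the finite identity follows from the symmetric one just proved. Unconditionally, the natural plan is to transplant the generating-function argument into $\calA$: for each odd prime $p$ form $\sum_\ell T_p(\{1\}^\ell)u^\ell$ and $\sum_\ell S_p(\{1\}^\ell)u^\ell$ in $\mathbb F_p[u]$ — using the partial sums $T_p,S_p$ appearing in Theorem~\ref{thm:FMTVshuffle} — and seek mod-$p$ surrogates for $T(\{1\}^\ell;x)=\tfrac1{\ell!}\phi(x)^\ell$ and $d\log\cosh\phi=\tanh(\phi)\,d\phi$. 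This is exactly where the difficulty lies: over $\calA$ there is no exact ``shuffle'' collapse of the ordered harmonic sums into a single power, because the quasi-shuffle corrections relating $T_p(\{1\}^\ell)$ to products of lower depth survive modulo $p$, and there is no elementary surrogate for $\tanh$; controlling the whole family $\{T_p(\{1\}^\ell)\bmod p\}_\ell$ at once appears to be of the same depth as the Kaneko--Zagier phenomenon itself. At present the identity is verified only for $w=1$ (by \eqref{equ:FESdepth1odd}), $w=3$ (Corollary~\ref{cor:tripFMTV}), and for $w\le9$ numerically. A softer intermediate target would be to show merely $T_\calA(\{1\}^w)\in\mathbb Q\,\beta_w$ — one could try combining the reversal relation with the linear shuffle relations of Theorem~\ref{thm:FMTVshuffle} to pin down all but one coordinate of $T_\calA(\{1\}^w)$ against the conjectural basis of $\FES_w$ — but even this requires relations among weight-$w$ finite Euler sums that are not currently available.
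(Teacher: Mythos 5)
The statement you were asked to prove is a conjecture; the paper contains no proof of it, only the $w=3$ case (Corollary~\ref{cor:tripFMTV} and Proposition~\ref{prop:tripSMTV}), Maple-aided verification at $w=5,7$, and numerics through $w\le 9$. Measured against that, your proposal is accurately self-assessed: it does not prove the full statement, but the part you do prove goes beyond what the paper establishes.

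Your symmetric-side argument checks out. The identity $T_\sha^\Sy(\{1\}^w)=\sum_{i=0}^w(-1)^iS_\sha(\{1\}^i)T_\sha(\{1\}^{w-i})$ for odd $w$ (since $\widetilde T=S$ in odd depth), the ODE $P'(s)=u\bigl(P(s)+\tanh s\bigr)$ for the one-variable $S$-generating function, the expansion $I(u)=\tfrac1u-\log 2-\sum_{j\ge1}\tfrac{(-1)^j(1-2^{-j})}{2^j}\zeta(j+1)u^j$, and the cancellation of the regularization parameter in $(-u)I(-u)e^{-u\alpha}e^{u\alpha}=-uI(-u)$ are all correct; the coefficient of $u^w$ is exactly $\tfrac{2^{w-1}-1}{2^{2w-2}}\zeta(w)$, and the $w=3$ case reproduces Proposition~\ref{prop:tripSMTV}, while the even-depth consistency matches Proposition~\ref{prop:SMTV2d=0}. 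Two points should be made explicit rather than implicit: (i) that regularizing the asymptotic expansion of $S(\{1\}^\ell;x)$ as $x\to1^-$ (discarding positive powers of $1-x$ and substituting $-\log(1-x)\mapsto\zeta_\sha(1)$) computes the same $S_\sha(\{1\}^\ell)$ that the paper defines as a signed average of shuffle-regularized Euler sums --- this follows from linearity of the regularization and the splitting of $\tfrac{dt}{1-t^2}$ and $\tfrac{t\,dt}{1-t^2}$ into $\tfrac12\bigl(\tfrac{dt}{1-t}\pm\tfrac{dt}{1+t}\bigr)$, but it is the hinge of the computation; (ii) the term-by-term integration giving $I(u)$, where the $j=0$ series is only conditionally convergent. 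Your reading that the displayed formula is really a statement for odd $w\ge3$ (the right-hand side degenerates at $w=1$) is fair, and the finite reversal $T_\calA(\{1\}^w)=-S_\calA(\{1\}^w)$ is indeed immediate from Proposition~\ref{prop:FMMVreversal}.

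The genuine gap is the one you name yourself: the finite identity $T_\calA(\{1\}^w)=\tfrac{2^{w-1}-1}{2^{2w-2}}\beta_w$. Your conditional deduction from Conjecture~\ref{conj:KanekoZagierAltVersion} via Proposition~\ref{prop:fESofTS}, together with $f_\ES\beta_w\equiv\zeta(w)$ extracted from the depth-one evaluations \eqref{equ:FESdepth1odd}, is sound but rests on an unproven isomorphism, and your diagnosis of why the mod-$p$ transplant of the generating-function argument fails (no shuffle collapse of $T_p(\{1\}^\ell)$ into a single power modulo $p$, no surrogate for $\tanh$) is exactly the obstruction. So the finite half remains where the paper leaves it: a conjecture, known for $w=3$ and, per the paper, for $w=5,7$ by explicit reductions in the algebra of finite Euler sums using the reversal and linear shuffle relations of Theorem~\ref{thm:FMTVshuffle}. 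In short: your unconditional proof of the symmetric identity is a genuine contribution not in the paper and appears correct modulo the regularization bookkeeping noted above, but the conjecture as a whole is not proved by your proposal, nor by the paper.
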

The conjecture holds when $w=3$ according to Corollary~\ref{cor:tripFMTV} and Proposition~\ref{prop:tripSMTV}. Aided by Maple, we can also
rigorously prove the conjecture for $w=5$ and $w=7$ by using the tables of values of finite Euler sums produced by reversal, shuffle, and
linear shuffle relations, and~the table of values for Euler sums is available online~\cite{BlumleinBrVe2010}.

Moreover, Conjecture~\ref{conj:T1_w}
still holds true for $T_\calA(\{1\}^w)=T_\sha^\Sy(\{1\}^w)=0$ when $w$ is even because of Propositions~\ref{prop:FMTV2d=0} and \ref{prop:SMTV2d=0}. However,~for $S$-values, we have
another conjecture.
\begin{conj}\label{conj:S1_w}
For all even $w\in\N$, there are rational numbers $c_{j}\in\Q$, $1\le j\le w/2$, such that
\begin{equation*}
  S_\calA(\{1\}^w)=\sum_{j=1}^{w/2} c_{j}S_\calA(j,w-j), \quad
  S_\sha^\Sy(\{1\}^w)=\sum_{j=1}^{w/2} c_{j}S_\sha^\Sy(j,w-j).
\end{equation*}
Moreover, $S_\calA(j,w-j)$ and $1\le j\le w/2$ are $\Q$-linearly independent,
and~$S_\sha^\Sy(j,w-j)$ and $1\le j\le w/2$ are $\Q$-linearly independent.
\end{conj}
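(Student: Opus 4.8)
The plan is to separate the conjecture into its spanning half --- that the $c_j$ exist, and that the \emph{same} rationals govern the finite and the symmetric identities --- and its independence half. For each fixed even $w$, the spanning half should be accessible by exactly the computation that proved Conjecture~\ref{conj:T1_w} for $w=5,7$. On the finite side I would expand $S_\calA(\{1\}^w)$ and each $S_\calA(j,w-j)$ into $\Q$-combinations of weight-$w$ finite Euler sums $\zeta_\calA(\bfs;\bfgs)$ via the defining expansion of $F_\calA$, then reduce them all to a fixed generating set of $\FES_w$ of the type in Theorem~\ref{thm:FESbasis} using the reversal relations of Proposition~\ref{prop:FMMVreversal}, the stuffle and shuffle relations, and the linear shuffle relations of Theorems~\ref{thm:FESshuffle} and~\ref{thm:FMTVshuffle}; solving the resulting finite linear system, Maple-aided, would simultaneously certify $S_\calA(\{1\}^w)\in\langle S_\calA(j,w-j):1\le j\le w/2\rangle$ and pin down the $c_j$. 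On the Archimedean side I would run the parallel reduction: unfold $S_\sha^\Sy$ from its definition into $\sha$-regularized alternating MZVs and reduce modulo the known $\Q$-linear relations among classical alternating MZVs of weight $w$, for which the data is available from \cite{BlumleinBrVe2010}. That the two reductions return the same $c_j$ is already forced, at least modulo $\zeta(2)\ES_{w-2}$, by Proposition~\ref{prop:fESofTS} together with Conjecture~\ref{conj:KanekoZagierAltVersion}; the residual $\zeta(2)\ES_{w-2}$-ambiguity would then be removed by checking that the weight-$(w-2)$ components cancel, again a finite verification.

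To make the spanning half uniform in $w$, rather than a weight-by-weight computation, one would want a structural collapse of $S_\calA(\{1\}^w)$ onto depth-two values --- for instance via an $S$/$T$-mixed linear shuffle identity in even depth, paralleling how Theorem~\ref{thm:FMTVshuffle}(iii) forces $T_\calA(\{1\}^{2d})=0$ in Proposition~\ref{prop:FMTV2d=0}, or via a closed form for $\sum_w S_\calA(\{1\}^w)X^w$ in terms of the parity-restricted harmonic sums $\sum_{0<n<p,\ n\equiv\eps\ppmod{2}}1/n$. I do not see such an identity at present, so for now the finite computation above --- rigorous for each small $w$ --- is the safe route.

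The independence half is the genuine obstacle, and I do not expect a proof of it in full generality. Linear independence of $S_\sha^\Sy(1,w-1),\dots,S_\sha^\Sy(w/2,w/2)$ is a statement about actual real numbers, hence a disguised lower bound on $\dim_\Q\ES_w$; the only unconditional input is Deligne's determination of the level-two motivic fundamental group, which fixes the dimension of the \emph{motivic} Euler sum space but delivers independence of the corresponding periods only under Grothendieck's period conjecture. On the finite side, reducing each $S_\calA(j,w-j)$ to the generating set of Theorem~\ref{thm:FESbasis} turns the independence of $S_\calA(1,w-1),\dots,S_\calA(w/2,w/2)$ in $\calA$ into the statement that explicit $\Q$-combinations of products of the $\gb_k$ and powers of $\tq_2$ are nonzero modulo $p$ for infinitely many primes $p$ --- a statement governed by Conjecture~\ref{conj:KanekoZagierAltVersion}, equivalently by the conjectural basis of Conjecture~\ref{conj:FESbasis} matched against Zlobin's Conjecture~\ref{conj:Zlobin}, and out of current reach. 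So what is realistically attainable is the complete package for each fixed small $w$, with the two independence claims checked against the conjectural bases. One caveat for the statement itself: the case $w=2$ is degenerate, since $S_\sha^\Sy(1,1)=2S_\sha(1,1)-S_\sha(1)^2=0$ by $\sha$-regularization (as in the proof of Proposition~\ref{prop:SMTV2d=0}), so the symmetric independence claim should be read for $w\ge 4$.
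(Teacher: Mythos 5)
This statement is a conjecture in the paper: the author offers no proof, only numerical evidence (the same Maple/GP-Pari experiments that support Conjecture~\ref{conj:T1_w}), so there is no proof to compare yours against. Read as a verification programme, your plan is aligned with how the paper actually operates --- expand $S_\calA(\{1\}^w)$ and $S_\calA(j,w-j)$ into finite Euler sums, reduce against the generating sets of Theorem~\ref{thm:FESbasis} via reversal and (linear) shuffle relations, and match the Archimedean side through Proposition~\ref{prop:fESofTS} and the data of \cite{BlumleinBrVe2010} --- and you are right that this only certifies the spanning half weight by weight, while the independence half is a dimension lower bound (on $\ES_w$, resp.\ on $\FES_w$ inside $\calA$) that nobody can currently prove; indeed it is exactly the kind of statement the paper delegates to Conjecture-Principle-Philosophy~\ref{cpp}. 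So the honest assessment is: no genuine proof is proposed, none exists in the paper, and your outline does not misrepresent what is provable.

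There is, however, one concrete error: your closing caveat that $w=2$ is degenerate because $S_\sha^\Sy(1,1)=2S_\sha(1,1)-S_\sha(1)^2=0$ ``by $\sha$-regularization as in Proposition~\ref{prop:SMTV2d=0}.'' That argument does not transfer. The identity $T_\sha(\{1\}^\ell)=\frac{1}{\ell!}T_\sha(1)^\ell$ used there depends on all $\ell$ integrand forms being the same $\frac{dt}{1-t^2}$; for $S$-values the parity pattern forces the forms to alternate between $\frac{dt}{1-t^2}$ and $\frac{t\,dt}{1-t^2}$ (e.g.\ $S(1,1;x)=\int_0^x\frac{dt}{1-t^2}\frac{t\,dt}{1-t^2}$), so $S_\sha(\{1\}^\ell)$ is not $\frac{1}{\ell!}S_\sha(1)^\ell$. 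Computing directly from the paper's definition, $S_\sha(1)=\frac12\big(\zeta_\sha(1)+\zeta(\bar1)\big)$ and $S_\sha(1,1)=\frac14\big(\zeta_\sha(1,1)+\zeta_\sha(1,\bar1)-\zeta(\bar1,1)-\zeta(\bar1,\bar1)\big)$, and using $\zeta_\sha(1,\bar1)=\zeta_\sha(1)\zeta(\bar1)-\zeta(\bar1,\bar1)$, $\zeta(\bar1,1)=\frac12\log^2 2$, $\zeta(\bar1,\bar1)=\frac12\log^2 2-\frac{\zeta(2)}{2}$, the regulator cancels and one finds $S_\sha^\Sy(1,1)=\frac{\zeta(2)}{2}-\log^2 2\neq 0$ (and $\equiv-\log^2 2\not\equiv 0$ modulo $\zeta(2)$). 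So the $w=2$ case of the independence claim is not degenerate but is in fact (unconditionally, on the symmetric side) consistent with the conjecture, and no restriction to $w\ge 4$ should be imposed.
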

Note that $S_\calA(j,w-j)\in\calA$ when $S_\sha^\Sy(j,w-j)$ are all real~numbers.

\section{Alternating Multiple $T$-Values}\label{s4}
We now turn to the alternating version of MTVs and derive some relations among them.
These values are intimately related to the colored MZVs of level 4
(i.e., multiple polylogarithms evaluated for the fourth roots of unity). We refer the interested reader to~\cite{XuZhao2020a,XuZhao2020c}
for the fundamental results concerning these~values.

Recall that for any $(\bfs,\bfgs)\in\N^d\times \{\pm1\}^d$ ,
we have defined the finite alternating multiple $T$-values as\vspace{-8pt}
\begin{equation*}
T(\bfs;\bfgs):=\bigg(\sum_{\substack{p>n_1>\dots>n_d>0\\ n_j\equiv d-j+1\ppmod{2}}}
        \prod_{j=1}^d \frac{\gs_j^{(n_j-d+j-1)/2}}{n_j^{s_j}} \bigg)_{p\in\calP}\in\calA.
\end{equation*}
We have seen from Theorem~\ref{thm:FMTVshuffle} in Section~\ref{sec:linShuffle} that these values satisfy the
linear shuffle relations. It is also not hard to get the reversal relations when the depth is even, as~shown~below.

\begin{prop}[{Reversal relations of finite alternating MTVs}]\label{prop:FAMTVreversal}
Let $\bfs\in\N^d$ for some even $d\in\N$. Then,
\begin{align}\label{equ:FAMTSVReversalEvenDep}
T_\calA(\ola{\bfs},\ola{\bfgs})=&\, (\gs_1,\dots,\gs_d)^{(p-1-d)/2} (-1)^{|\bfs|} T_\calA(\bfs,\bfgs),
\end{align}
where the element $(-1)^{(p-1-d)/2}=\big( (-1)^{(p-1-d)/2}  \pmod{p}\big)_{3\le\in\calP}\in\calA$.
\end{prop}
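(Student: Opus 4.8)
The plan is to prove the reversal relation by manipulating the defining finite sum directly, exploiting the substitution $n_j \mapsto p - n_{d+1-j}$ on the summation indices, exactly as one does for the reversal relations of finite MZVs and finite Euler sums. First I would write out the right-hand sum $T_\calA(\ola{\bfs},\ola{\bfgs})$: by definition this is the tuple, indexed by primes $p$, of
\begin{equation*}
\sum_{\substack{p>m_1>\dots>m_d>0\\ m_j\equiv d-j+1\ppmod 2}} \prod_{j=1}^d \frac{\gs_{d+1-j}^{(m_j-d+j-1)/2}}{m_j^{s_{d+1-j}}} \pmod p.
\end{equation*}
Now perform the change of variables $m_j = p - n_{d+1-j}$; since $p$ is odd this is a bijection between strictly decreasing tuples below $p$, reversing the order, so the new variables satisfy $p > n_1 > \dots > n_d > 0$. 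Rewriting the parity constraint: $m_j \equiv d-j+1 \pmod 2$ becomes $p - n_{d+1-j} \equiv d-j+1 \pmod 2$, i.e., $n_{d+1-j} \equiv p - d + j - 1 \pmod 2$; re-indexing with $i = d+1-j$ gives $n_i \equiv p - i \pmod 2$, which differs from the MTV parity condition $n_i \equiv d - i + 1 \pmod 2$ by the fixed parity shift $p - 1 - d$. This mismatch is precisely what forces the factor $(-1)^{(p-1-d)/2}$ to appear: the set of $n$-tuples being summed over on the two sides is not literally the MTV index set, but its ``parity-shifted'' version, and since $p-1-d$ is even (both $p-1$ and $d$ are even) the shift is by an even amount and one can absorb it.

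Next I would handle the summand. Modulo $p$ we have $m_j^{-1} = (p - n_{d+1-j})^{-1} \equiv -n_{d+1-j}^{-1} \pmod p$, so $m_j^{-s_{d+1-j}} \equiv (-1)^{s_{d+1-j}} n_{d+1-j}^{-s_{d+1-j}}$; taking the product over $j$ and re-indexing yields the global sign $(-1)^{s_1 + \dots + s_d} = (-1)^{|\bfs|}$. For the $\gs$-powers: the exponent of $\gs_{d+1-j}$ is $(m_j - d + j - 1)/2 = (p - n_{d+1-j} - d + j - 1)/2$; re-indexing $i = d+1-j$ turns this into $(p - n_i - i + (d - i) )/2$... more carefully, $j = d+1-i$ so the exponent is $(p - n_i - d + (d+1-i) - 1)/2 = (p - n_i - i)/2$. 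Compare with the exponent of $\gs_i$ in $T_\calA(\bfs;\bfgs)$, which is $(n_i - d + i - 1)/2$. The difference of exponents is $\big((p - n_i - i) - (n_i - d + i - 1)\big)/2 = (p - 1 - d + 2(\text{integer}) )/2 \cdot(\pm)$, and the purely $n_i$-dependent part cancels once we also use that on the MTV index set $n_i \equiv d-i+1 \pmod 2$ makes these half-integers actual integers. The leftover is $\gs_i$ raised to a power congruent to $(p-1-d)/2$ mod the order of $\gs_i$ (which is $1$ or $2$), so $\prod_i \gs_i^{(m_j - \cdots)/2} = \prod_i \gs_i^{(n_i - \cdots)/2} \cdot \prod_i \gs_i^{(p-1-d)/2}$, and the second product is exactly $(\gs_1 \cdots \gs_d)^{(p-1-d)/2}$, understood as the stated element of $\calA$. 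Combining the three pieces --- the reversed index set (which after the parity shift is again the MTV index set), the sign $(-1)^{|\bfs|}$, and the factor $(\gs_1 \cdots \gs_d)^{(p-1-d)/2}$ --- gives \eqref{equ:FAMTSVReversalEvenDep}.

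The main obstacle I anticipate is bookkeeping the parity conditions correctly: one must verify that after $n_j \mapsto p - n_{d+1-j}$ the new parity constraint, combined with the even shift $p-1-d$, recovers precisely the index set defining $T_\calA(\bfs;\bfgs)$ and not some other subset, and that the half-integer exponents $(m_j - d + j - 1)/2$ remain integers throughout the manipulation so that writing $\gs^{(p-1-d)/2}$ makes sense termwise. A clean way to package this is to first record the elementary identity that for $p$ odd and $d$ even, $(p-1-d)/2 \in \Z$, and that the map $(n_1,\dots,n_d) \mapsto (p-n_d,\dots,p-n_1)$ carries $\{p>n_1>\dots>n_d>0,\ n_j\equiv d-j+1\}$ bijectively onto $\{p>n_1>\dots>n_d>0,\ n_j \equiv d-j+1 + (p-1-d)\}$, and the latter condition is equivalent to the former since $p-1-d$ is even. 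Once that is in place the rest is the routine modular arithmetic above. I would also remark that this argument is the finite analog of \cite[Propositions~2.8 and 2.9]{FKLQWZ2024} cited earlier, and that the extra factor $(\gs_1\cdots\gs_d)^{(p-1-d)/2}$ disappears (reducing to Proposition~\ref{prop:FMMVreversal}) exactly when all $\gs_j = 1$ or when $(p-1-d)/2$ is even.
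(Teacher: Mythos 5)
Your proposal is correct and follows essentially the same route as the paper's own proof: the substitution $n_j\mapsto p-n_{d+1-j}$ (equivalently, $n_j\to p-n_j$ followed by reversing the index, using that $d$ is even and $p$ is odd), which produces $(-1)^{|\bfs|}$ from the factors $1/n_j^{s_j}$ modulo $p$ and the extra factor $(\gs_1\cdots\gs_d)^{(p-1-d)/2}$ from the parity shift in the sign exponents. Your parity bookkeeping (that $p-1-d$ is even, so the transformed constraint is again the MTV index set, and that the $n_i$-dependent part of the exponent shift is even there) matches the computation in the paper, merely run from the reversed side rather than from $T_p(\bfs,\bfgs)$.
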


\begin{proof}  Let $p$ be an odd prime. Then, by changing the indices $n_j\to p-n_j$, we obtain
\begin{align*}
T_p(\bfs,\bfgs):=&\,\sum_{\substack{p>n_1>\dots>n_d>0\\ n_j\equiv d-j+1\ppmod{2}}}
        \prod_{j=1}^d \frac{\gs_j^{(n_j-d+j-1)/2}}{n_j^{s_j}} \\
\equiv &\,(-1)^{|\bfs|} \sum_{\substack{p>n_d>\dots>n_1>0\\ p-n_j\equiv d-j+1\ppmod{2}}}
        \prod_{j=1}^d \frac{\gs_j^{(n_j-p+d-j+1)/2}}{n_j^{s_j}} \pmod{p}.
\end{align*}

Let $t_j=s_{d+1-j}$, $\eps_j=\gs_{d+1-j}$, and~$k_j=n_{d+1-j}$. Then, by changing the indices, we obtain $j\to d+1-j$ (since $d$ is even)\vspace{-6pt}
\begin{align*}
T_p(\bfs,\bfgs) \equiv &\,(-1)^{|\bfs|} \sum_{\substack{p>n_1>\dots>n_d>\\ p-k_j\equiv j\ppmod{2}}}
        \prod_{j=1}^d \frac{\eps_j^{(k_j-p+j)/2}}{k_j^{t_j}}\\
\equiv &\,(-1)^{|\bfs|} \sum_{\substack{p>n_1>\dots>n_d>\\ p-k_j\equiv j\ppmod{2}}}
        (\gs_1\cdots\gs_d)^{(d-p+1)/2} \prod_{j=1}^d \frac{\eps_j^{(k_j-d+j-1)/2}}{k_j^{t_j}}\\
\equiv &\,(\gs_1\cdots\gs_d)^{(d-p+1)/2} (-1)^{|\bfs|} \sum_{\substack{p>k_1>\dots>k_d>\\ k_j\equiv d-j+1\ppmod{2}}}
         \prod_{j=1}^d \frac{\eps_j^{(k_j-d+j-1)/2}}{k_j^{t_j}}\\
\equiv &\,(\gs_1\cdots\gs_d)^{(d-p+1)/2} (-1)^{|\bfs|} T_p(\bft,\bfeps) \\
\equiv &\, (\gs_1\cdots\gs_d)^{(d-p+1)/2}(-1)^{|\bfs|} T_p(\ola{\bfs},\ola{\bfgs})
\end{align*}
as desired.
\end{proof}

It should be clear to the attentive reader that $T$-values are always intimately related to the $S$-values
when the depth is odd because of the reversal relations. Even though we did not consider this in the
above, it plays a key role in the proof of the next~result.

\begin{prop} \label{prop:FAMTVwt1}
Let $q_2(p)=(2^{p-1}-1)/2$ for all $p>2$. Then, we have
\begin{align*}
S_\calA(\bar1)=-\emtq_2/2, \quad T_\calA(\bar1)= \Big((-1)^{\tfrac{p-1}2} q_2(p)/2 \pmod{p}\Big)_{p>2}\in\calA.
\end{align*}
\end{prop}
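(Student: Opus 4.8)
The plan is to compute the two single-variable finite sums directly from the definition, using the congruence $n\equiv d-j+1\pmod 2$ with $d=1$, $j=1$, which forces $n$ to be even. First I would write
\begin{equation*}
S_\calA(\bar1)=\Big(\sum_{\substack{0<n<p\\ n\equiv 0\ppmod 2}}\frac{(-1)^{(n-0)/2}}{n}\pmod p\Big)_{p>2}
=\Big(\sum_{0<m<p/2}\frac{(-1)^{m}}{2m}\pmod p\Big)_{p>2},
\end{equation*}
after the substitution $n=2m$. The alternating harmonic-type sum $\sum_{m=1}^{(p-1)/2}(-1)^m/m$ is classically $p$-adically evaluated: splitting into even and odd $m$ and using $\sum_{m=1}^{(p-1)/2}1/m\equiv -2q_2(p)\pmod p$ (a standard Fermat-quotient identity, see e.g.\ \cite[Theorem~8.2.7]{ZhaoBook} and its proof, or the evaluation \eqref{equ:FESdepth1odd} combined with $\zeta_\calA(1)=0$), I would deduce $\sum_{m=1}^{(p-1)/2}(-1)^m/m\equiv -q_2(p)\pmod p$, hence $S_\calA(\bar1)=-\tq_2/2$. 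In fact this last step can be bypassed entirely: by the definition of $S_\calA$ at depth one and \eqref{equ:FESdepth1odd}, $S_\calA(\bar1)=\frac12\zeta_\calA(\bar1)\cdot$(sign bookkeeping), so I would simply check that the sign convention in the new definition $T(\bfs;\bfgs)$ makes $S_\calA(\bar1)$ equal to half of $\frac12\zeta_\calA(\bar{1})=-\tq_2/2$—wait, more carefully, $S_\calA(\bar 1)$ as defined above \emph{is} already the single even-indexed alternating sum, so one just identifies it with the known Fermat quotient evaluation.

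For $T_\calA(\bar1)$ I would again reduce to an even summation index, but now with $j=1$, $d=1$ giving $n\equiv 1\pmod 2$? No—let me recheck: for $T$ the condition is $n_j\equiv d-j+1\pmod 2$, so at $d=1,j=1$ this is $n\equiv 1\pmod 2$, i.e.\ $n$ odd. So
\begin{equation*}
T_\calA(\bar1)=\Big(\sum_{\substack{0<n<p\\ n\equiv 1\ppmod 2}}\frac{(-1)^{(n-1)/2}}{n}\pmod p\Big)_{p>2}
=\Big(\sum_{0\le m<(p-1)/2}\frac{(-1)^{m}}{2m+1}\pmod p\Big)_{p>2}
\end{equation*}
after $n=2m+1$. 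The plan is to relate $\sum_{m=0}^{(p-3)/2}(-1)^m/(2m+1)$ to $q_2(p)$ times a sign $(-1)^{(p-1)/2}$. The standard route: reindex by $2m+1\mapsto p-(2m+1)$, or better, note $\sum_{m=0}^{(p-3)/2}(-1)^m/(2m+1)\equiv \sum_{\text{odd }0<n<p}(-1)^{(n-1)/2}/n$ and split the range at $p/2$. Each odd $n<p/2$ pairs with the odd number $p-n>p/2$, with $1/(p-n)\equiv -1/n$ and $(-1)^{((p-n)-1)/2}=(-1)^{(p-1)/2}(-1)^{-(n-1)/2}\cdot(\pm1)$ depending on $(p-1)/2$ parity; collecting, the full sum becomes $(1-(-1)^{(p-1)/2})$ or $(1+(-1)^{(p-1)/2})$ times a half-range sum, which is in turn $\pm q_2(p)/2$ by the same even/odd splitting of the harmonic sum used above. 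The cleanest packaging is: $T_\calA(\bar1)\equiv (-1)^{(p-1)/2} S_{p}(\bar1)$ modulo $p$ up to the sign arising from reversal, which is exactly the depth-one instance of Proposition~\ref{prop:FAMTVreversal}'s philosophy (the remark preceding Proposition~\ref{prop:FAMTVwt1}), giving $T_\calA(\bar1)=\big((-1)^{(p-1)/2}q_2(p)/2\big)_{p>2}$.

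The main obstacle is bookkeeping the sign $(-1)^{(n-d+j-1)/2}$ correctly through the substitutions $n=2m$ resp.\ $n=2m+1$ and through the reflection $n\mapsto p-n$: the exponent $(p-n-1)/2$ differs from $-(n-1)/2$ by $(p-1)/2$ plus an integer, and tracking whether that contributes $(-1)^{(p-1)/2}$ or its inverse—and ensuring it lands on $T_\calA(\bar 1)$ rather than on $-T_\calA(\bar1)$ or on an $S$-value—requires care. I expect everything else (the Fermat-quotient evaluation of $\sum 1/m$ over $m<p/2$, and the even/odd split) to be routine and citable from \cite{ZhaoBook} or \cite{TaurasoZh2010}. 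I would therefore organize the proof as: (1) reduce $S_\calA(\bar1)$ to the even-index sum and cite \eqref{equ:FESdepth1odd}; (2) reduce $T_\calA(\bar1)$ to the odd-index sum; (3) apply the reflection $n\mapsto p-n$ to that sum, isolating the factor $(-1)^{(p-1)/2}$; (4) identify the resulting inner sum with $q_2(p)/2$ via the standard harmonic-sum congruence.
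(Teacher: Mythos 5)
Your overall strategy is the same as the paper's: evaluate the even-index alternating sum $S_p(\bar1)$ by classical Fermat-quotient congruences, then get $T_\calA(\bar1)$ from the reflection $n\mapsto p-n$, which converts the odd-index sum into $-(-1)^{(p-1)/2}S_p(\bar1)$; your step (3)--(4) packaging is exactly the paper's proof of the $T$-part, and your tentative sign bookkeeping does resolve the way you guess (note, though, that your intermediate pairing description is off: for odd $n$ the number $p-n$ is \emph{even}, so ``each odd $n<p/2$ pairs with the odd number $p-n$'' is not what happens --- the reflection takes the odd-index $T$-sum to the even-index $S$-sum, which is precisely why the reversal lands on $S_p(\bar1)$ rather than back on a $T$-value).

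The genuine gap is in the $S$-part. You claim that splitting $\sum_{m=1}^{(p-1)/2}(-1)^m/m$ into even and odd $m$ and using only $\sum_{m=1}^{(p-1)/2}1/m\equiv -2q_p(2)\pmod p$ yields $\sum_{m=1}^{(p-1)/2}(-1)^m/m\equiv -q_p(2)$. That deduction does not follow: the even/odd split gives $2\sum_{2\mid m,\ m\le (p-1)/2}1/m-\sum_{m\le(p-1)/2}1/m$, and the first term is $\sum_{k\le\lfloor p/4\rfloor}1/k$, a \emph{quarter-range} harmonic sum which is not determined by the half-range congruence; one needs the separate classical fact $\sum_{k\le\lfloor p/4\rfloor}1/k\equiv -3q_p(2)\pmod p$ (Glaisher--Lehmer). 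This extra input is exactly what the paper imports via Sun's Theorem~3.2, in the form $\sum_{0<k<p,\ 4\mid k}2/k\equiv-\tfrac32 q_p(2)$, combined with $S_p(1)=\tfrac12\zeta_p(\bar1)\equiv -q_p(2)$. Your suggestion that the step can be ``bypassed'' by identifying $S_\calA(\bar1)$ with $\tfrac12\zeta_\calA(\bar1)$ is also incorrect: \eqref{equ:FESdepth1odd} evaluates the non-alternating $S_\calA(1)$, whereas $S_\calA(\bar1)$ carries the sign $(-1)^{m}$ on the half-range index and is a genuinely different sum. Once the quarter-range congruence (or the equivalent congruence $\sum_{k=1}^{(p-1)/2}(-1)^k/k\equiv -q_p(2)$, which is classical and can indeed be cited from the literature, e.g.\ \cite{Sun2008} or \cite{TaurasoZh2010}) is supplied, the rest of your argument goes through and coincides with the paper's.
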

\begin{proof} Recall that\vspace{-6pt}
\begin{equation*}
S_p(1):=\sum_{p>k>0, 2|k} \frac1k, \quad S_p(\bar1):=\sum_{p>k>0, 2|k}  \frac{(-1)^{k/2}}{k}.
\end{equation*}

According to \cite[Theorem~3.2]{Sun2008}, we see that
\begin{equation*}
S_p(1)+S_p(\bar1)=\sum_{p>k>0, 2|k} \bigg(\frac1k+\frac{(-1)^{k/2}}{k}\bigg)=\sum_{p>k>0, 4|k} \frac2k\equiv -\frac32 q_p(2) \pmod{p}.
\end{equation*}
Since $S_p(1)=\zeta_p(\bar1)/2=-q_p(2)$, we immediately see that $S_\calA(\bar1)=-\tq_2/2$.
By taking the reversal, we obtain
\begin{align*}
T_p(\bar1)=\sum_{p>k>0, 2\nmid k} \frac{(-1)^{(k-1)/2}}{k}
=&\, \sum_{p>k>0, 2|k}  \frac{(-1)^{(p-k-1)/2}}{p-k} \\
\equiv&\, - (-1)^{\tfrac{p-1}2} S_p(\bar1) \equiv (-1)^{\tfrac{p-1}2}\frac{q_p(2)}2 \pmod{p},
\end{align*}
as desired.
\end{proof}

As we analyzed in \cite[p.~239]{ZhaoBook}, there is overwhelming evidence that $\tq_2\ne 0$
in $\calA$. In~\cite[Theorem~1]{Silverman1998}, Silverman even showed that if~abc-conjecture holds, then
\begin{equation*}
    \Big|\Big\{p\le X: q_2(p) \ne 0 \pmod{p}\Big\} \Big|=O(\log(X) )  \quad\text{as}\quad X\to\infty.
\end{equation*}
In we are sure the following conjecture is~true.

\begin{conj} \label{conj:q2Conj}
For every pair of positive integers $m>a>0$, $\gcd(m,a)=1$,
there are infinitely many primes $p\equiv a\pmod{m}$
such that $q_2(p)\not\equiv 0\pmod{p}$.
\end{conj}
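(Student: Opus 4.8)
\begin{rem}
An unconditional proof is presumably out of reach: already the case $m=1$ (infinitely many non-Wieferich primes) is a well-known open problem. The plan is therefore to deduce the statement from the abc-conjecture, extending the argument of Silverman \cite{Silverman1998} and its refinement in arithmetic progressions due to Graves and Murty. The starting point is to apply abc to the trivial relation $2^{n}=1+(2^{n}-1)$: writing $\mathrm{rad}$ for the radical, one gets $2^{n}\ll_{\varepsilon}\mathrm{rad}\bigl(2(2^{n}-1)\bigr)^{1+\varepsilon}$, hence $\mathrm{rad}(2^{n}-1)\gg_{\varepsilon}2^{(1-\varepsilon)n}$, so the powerful (squarefull) part of $2^{n}-1$ is $O_{\varepsilon}(2^{\varepsilon n})$. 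The number-theoretic input is the ``lifting the exponent'' observation that if $q$ is a primitive prime factor of $2^{n}-1$ (that is, $\mathrm{ord}_{q}(2)=n$), then $q$ is Wieferich if and only if $q^{2}\mid 2^{n}-1$. Hence, if \emph{every} primitive prime factor of $2^{n}-1$ were Wieferich, the primitive part $\Phi_{n}^{\ast}(2)\asymp 2^{\phi(n)}$ would be squarefull, contradicting the abc bound as soon as $\phi(n)/n$ stays bounded away from $0$ and $n$ is large.

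To reach the residue class $a=1$ one runs this with $n=m\ell$, where $\ell$ is a large prime not dividing $m$: then $\phi(n)/n\to\phi(m)/m>0$, so $2^{m\ell}-1$ has a primitive, non-Wieferich prime factor $q$, and $q\equiv 1\pmod{m\ell}$ forces $q\equiv 1\pmod m$; distinct $\ell$ give primes of distinct multiplicative order, hence distinct $q$, and one obtains infinitely many such primes (in fact $\gg_{m}\log x/\log\log x$ of them below $x$). This is the Graves--Murty theorem.

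The genuinely hard part --- and, I suspect, why the statement is only conjectured --- is the general residue class: primitive prime factors of $2^{n}-1$ are \emph{always} $\equiv 1$ modulo their order, so the cyclotomic device above cannot steer them into a class $a\not\equiv 1\pmod m$. A plausible remedy is to replace the ``primitive part'' selection by a Chebotarev condition: work in the fields $K_{\ell}=\Q(\zeta_{m},2^{1/\ell})$ for auxiliary primes $\ell$, select $n$ together with a positive-density set of primes $p\equiv a\pmod m$ whose splitting in the $K_{\ell}$ pins down the $\ell$-part of $\mathrm{ord}_{p}(2)$, and re-run the powerful-part estimate on the corresponding subproduct of $2^{n}-1$. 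Making this work --- reconciling the multiplicative constraints on $\mathrm{ord}_{p}(2)$ with the additive constraint $p\equiv a\pmod m$, while retaining enough of $2^{n}-1$ to beat the $O_{\varepsilon}(2^{\varepsilon n})$ bound on its squarefull part --- is the crux, and I do not see how to carry it out without new input beyond abc.
\end{rem}
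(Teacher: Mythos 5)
You have not proved the statement, and you should not be expected to: in the paper this is Conjecture~\ref{conj:q2Conj}, stated without proof and used only as a hypothesis (for $m=4$) in Theorem~\ref{thm:abcConj}; the only thing the paper offers in its direction is the citation of Silverman's conditional result on non-Wieferich primes (which, incidentally, the paper states with the bound going the wrong way --- Silverman proves a lower bound $\gg \log X$ for the count of non-Wieferich primes under abc, not an $O(\log X)$ estimate). So there is no paper proof to compare yours against, and your assessment that an unconditional proof is out of reach is exactly right: even the crudest special case (infinitely many primes with $q_2(p)\not\equiv 0\pmod p$, e.g.\ $m=2$, $a=1$) is open.

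As a partial, conditional analysis your sketch is sound. The abc step (radical of $2^n-1$ is $\gg_\varepsilon 2^{(1-\varepsilon)n}$, hence its powerful part is $O_\varepsilon(2^{\varepsilon n})$), the criterion that a primitive prime divisor $q$ of $2^n-1$ is Wieferich iff $q^2\mid 2^n-1$, and the Graves--Murty refinement producing non-Wieferich primes $\equiv 1\pmod m$ are all correctly stated, and you correctly identify the genuine obstruction: primitive prime factors of $2^n-1$ lie in the class $1$ modulo their order, so this machinery cannot reach a residue class $a\not\equiv 1\pmod m$, and your proposed Chebotarev workaround is (as you admit) speculative. One small quibble: the case ``$m=1$'' you invoke is not literally covered by the hypothesis $m>a>0$ of the conjecture, though the intended content (non-Wieferich primes are infinite) is of course the relevant open problem. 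In short, your remark is an accurate description of the state of the art, but it leaves the conjecture exactly as open as the paper does --- which, for Theorem~\ref{thm:abcConj}, is all the paper assumes.
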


\begin{thm} \label{thm:abcConj}
If Conjecture~\ref{conj:q2Conj} holds for $m=4$, then $T_\calA(1)$ and $T_\calA(\bar1)$
are $\Q$-linearly independent.
\end{thm}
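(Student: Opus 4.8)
The plan is to reduce the claimed linear independence to a divisibility statement about the Fermat quotient $q_2(p)$ along the two residue classes modulo $4$, and then feed each class into Conjecture~\ref{conj:q2Conj}. The first step is to record the components of the two elements in $\calA$: by \eqref{equ:FESdepth1odd} we have $T_\calA(1)=\tq_2$, whose $p$-component is $q_2(p)\bmod p$, while by Proposition~\ref{prop:FAMTVwt1} the $p$-component of $T_\calA(\bar1)$ equals $(-1)^{(p-1)/2}\,q_2(p)/2\bmod p$ for every odd prime $p$.

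Now suppose $a\,T_\calA(1)+b\,T_\calA(\bar1)=0$ in $\calA$ for some $a,b\in\Q$. Since $\calA$ is a $\Q$-algebra, we may clear denominators and assume $a,b\in\Z$. By the definition \eqref{equ:poormanA} of $\calA$ there is an integer $N$ such that for every prime $p>N$ we have $a\,q_2(p)+b\,(-1)^{(p-1)/2}q_2(p)/2\equiv 0\pmod p$; multiplying by $2$, which is a unit modulo the odd prime $p$, this becomes
\[
 q_2(p)\bigl(2a+(-1)^{(p-1)/2}b\bigr)\equiv 0\pmod p .
\]

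Consider the two fixed integers $2a+b$ and $2a-b$. If $2a+b\ne 0$, then for every prime $p\equiv 1\pmod 4$ with $p>\max(N,|2a+b|)$ the factor $2a+b$ is invertible modulo $p$, so the displayed congruence forces $q_2(p)\equiv 0\pmod p$; but Conjecture~\ref{conj:q2Conj} applied with $m=4$ and residue class $1$ (note $\gcd(4,1)=1$) produces infinitely many primes $p\equiv 1\pmod 4$ with $q_2(p)\not\equiv 0\pmod p$, a contradiction. Hence $2a+b=0$. Running the identical argument over the primes $p\equiv 3\pmod 4$, using Conjecture~\ref{conj:q2Conj} with $m=4$ and residue class $3$ ($\gcd(4,3)=1$), gives $2a-b=0$. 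Adding the two relations yields $4a=0$, hence $a=0$ and then $b=0$, which is the assertion.

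The argument is essentially all bookkeeping once the two $p$-components are written down; the only substantive input is the appeal to Conjecture~\ref{conj:q2Conj}, and it is genuinely needed --- unconditionally it is conceivable that $q_2(p)\equiv 0\pmod p$ for all large $p$ in one of the two residue classes mod $4$, in which case $T_\calA(1)$ and $T_\calA(\bar1)$ might fail to be independent. The minor technical points to verify are that the ``all but finitely many primes'' clause in the definition of $\calA$ is preserved when restricting to an arithmetic progression, and that the fixed integers $2a\pm b$ can be inverted modulo every sufficiently large prime in that progression; both are immediate.
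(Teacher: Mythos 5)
Your proposal is correct and follows essentially the same route as the paper: extract the $p$-components of $T_\calA(1)=\tq_2$ and $T_\calA(\bar1)$ from Proposition~\ref{prop:FAMTVwt1}, then apply Conjecture~\ref{conj:q2Conj} with $m=4$ separately on the classes $p\equiv 1$ and $p\equiv 3 \pmod 4$ to force both linear combinations of the coefficients to vanish. The only cosmetic difference is that you keep the factor $1/2$ explicit (obtaining $2a\pm b=0$ where the paper writes $c_1\pm c_2=0$), which changes nothing in the conclusion.
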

\begin{proof}
If $c_1 T_\calA(1)+c_2T_\calA(\bar1)=0$ in $\calA$ for some $c_1,c_2\in\Q$, then, according to Proposition~\ref{prop:FAMTVwt1},
we see that $(c_1+c_2)q_p(2)\equiv 0 \pmod{p}$ for infinitely many primes $p\equiv 1 \pmod{4}$.
If Conjecture~\ref{conj:q2Conj} holds the form $m=4$, then $c_1+c_2\equiv 0 \pmod{p}$
for infinitely many primes $p\equiv 1 \pmod{4}$. This would force $c_1+c_2=0$.
A similar consideration for primes $p\equiv 3 \pmod{4}$ implies that  $c_1-c_2=0$.
Hence, we must have $c_1=c_2=0$, which shows that $T_\calA(1)$ and $T_\calA(\bar1)$
are $\Q$-linearly independent.
\end{proof}

Define the \emph{finite Catalan's constant} as
\begin{equation*}
    G_\calA:=\Big(\frac{E_{p-3}}2 \Big)_{3<p\in\calP} \in\calA.
\end{equation*}

\begin{prop}
Let $\FAMTV_w$ be the vector space generated by finite alternating MTVs over $\Q$.
We have the following generating sets of $\FAMTV_w$ for $w<3$:
\begin{align*}
\FAMTV_1=\langle \emtq_2, (-1)^{p'}\emtq_2  \rangle, \quad
\FAMTV_2=\langle G_\calA, (-1)^{p'}G_\calA \rangle.
\end{align*}
\end{prop}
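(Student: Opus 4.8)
The plan is to prove both equalities by enumerating all finite alternating MTVs of the given weight and showing each one lies in the proposed $\Q$-span, using the depth-one evaluations of \eqref{equ:FESdepth1odd} and Proposition~\ref{prop:FAMTVwt1}, together with the reversal relations \eqref{equ:FAMTSVReversalEvenDep} and the linear shuffle relations of Theorem~\ref{thm:FMTVshuffle}, to reduce the higher-depth values to lower-depth ones. The two named generators in each weight arise from a single ``geometric'' quantity ($\tq_2$ in weight one, $G_\calA$ in weight two) multiplied by the unit $(-1)^{p'}$, which is exactly what the reversal sign factor $(\gs_1\cdots\gs_d)^{(p-1-d)/2}$ produces.

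For $w=1$ there is nothing to reduce: the only finite alternating MTVs of weight one are $T_\calA(1)$ and $T_\calA(\bar1)$, and by \eqref{equ:FESdepth1odd} and Proposition~\ref{prop:FAMTVwt1} we have $T_\calA(1)=-\tq_2$ and $T_\calA(\bar1)=\tfrac12(-1)^{p'}\tq_2$; hence $\FAMTV_1=\langle \tq_2,(-1)^{p'}\tq_2\rangle$. (Linear independence of the two generators is a separate, and conditional, matter treated in Theorem~\ref{thm:abcConj}, and is not needed here.)

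For $w=2$ the finite alternating MTVs are the depth-one values $T_\calA(2),T_\calA(\bar2)$ and the depth-two values $T_\calA(\pm1,\pm1)$. First, $-T_\calA(2)=S_\calA(2)=(2^{-1}-1)\gb_2=-\tfrac12\gb_2=0$ by \eqref{equ:FESdepth1odd} and the vanishing of the odd-index Bernoulli numbers, while $T_\calA(1,1)=0$ by Proposition~\ref{prop:FMTV2d=0}. Next, $T_\calA(\bar2)=\bigl(\sum_{0\le j\le(p-3)/2}(-1)^j/(2j+1)^2\bmod p\bigr)_{3<p}$, so by the classical congruence for Catalan's constant modulo $p$ (see, e.g., \cite{Sun2008}) we obtain $T_\calA(\bar2)=(E_{p-3}/2\bmod p)_{3<p}=G_\calA$. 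It then remains to treat the three genuinely alternating depth-two values: applying \eqref{equ:FAMTSVReversalEvenDep} with $\bfs=(1,1)$ gives $T_\calA(\bar1,1)=-(-1)^{p'}\,T_\calA(1,\bar1)$, and applying Theorem~\ref{thm:FMTVshuffle}(iii) with $\bfu=\emptyset$, $\bfv=\ty_{-1}$, $s=1$ (the parity condition $\dep(\bfu)+\dep(\bfv)=1$ being satisfied) to $\ty_1\sha\ty_{-1}=\ty_1\ty_{-1}+\ty_{-1}\ty_1$ gives $2T_\calA(1,\bar1)+T_\calA(\bar1,\bar1)=0$. Thus all three are $\Q$- or $(-1)^{p'}\Q$-multiples of the single value $T_\calA(1,\bar1)$, and the proof reduces to evaluating $T_\calA(1,\bar1)$ explicitly; here I would write $T_p(1,\bar1)=\tfrac12\sum_{a=1}^{(p-1)/2}\tfrac1a\sum_{j=0}^{a-1}\tfrac{(-1)^j}{2j+1}$ and use partial summation, where the outer sum telescopes against $\sum_{j=0}^{(p-3)/2}(-1)^j/(2j+1)=T_p(\bar1)\equiv(-1)^{(p-1)/2}q_p(2)/2\pmod p$ and a standard mod-$p$ congruence for the residual weight-two harmonic sum, reading off the span of $\FAMTV_2$ at the end.

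The hard part is precisely this last step. Unlike the classical multiple $T$-functions, the one-variable alternating $T$-functions satisfy no stuffle (harmonic) product — the rigidly fixed parities $n_j\equiv d-j+1\pmod 2$ obstruct it — so $T_\calA(1,\bar1)$ cannot be obtained simply by multiplying two depth-one functions and extracting diagonal terms; the needed congruence must be established directly, or imported from the known structure of weight-two finite level-four multiple zeta values (cf.\ \cite{XuZhao2020a,XuZhao2020c}). A secondary source of care is the sign bookkeeping in \eqref{equ:oldNewConvert}, \eqref{equ:bfp}--\eqref{equ:bfq}, and in the reversal factor $(\gs_1\cdots\gs_d)^{(p-1-d)/2}$, since it is this factor that separates the two generators $G_\calA$ and $(-1)^{p'}G_\calA$ (respectively $\tq_2$ and $(-1)^{p'}\tq_2$) and must be tracked consistently through every reduction.
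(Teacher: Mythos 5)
Your reduction is the same as the paper's: depth-one evaluations via \eqref{equ:FESdepth1odd} and Proposition~\ref{prop:FAMTVwt1} for $w=1$; $T_\calA(2)=T_\calA(1,1)=0$; the linear shuffle relation $2T_\calA(1,\bar1)+T_\calA(\bar1,\bar1)=0$ from $\ty_1\sha\ty_{-1}$; and the reversal $T_\calA(\bar1,1)=-(-1)^{p'}T_\calA(1,\bar1)$ from \eqref{equ:FAMTSVReversalEvenDep}. The genuine gap is exactly the step you yourself flag as ``the hard part'' and then leave as a sketch. Those relations only show that the three alternating depth-two values are rational (or $(-1)^{p'}$-twisted rational) multiples of the single element $T_\calA(1,\bar1)$; they do not place $T_\calA(1,\bar1)$ in $\langle G_\calA,(-1)^{p'}G_\calA\rangle$, and without that neither inclusion of the asserted equality for $\FAMTV_2$ follows --- a priori the space could be spanned by the three elements $G_\calA$, $T_\calA(1,\bar1)$, $(-1)^{p'}T_\calA(1,\bar1)$. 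The missing ingredient is the congruence $T_\calA(1,\bar1)=-\tfrac12 G_\calA$, which the paper does not obtain by elementary manipulation either: it imports $T_\calA(\bar2)=G_\calA=-2T_\calA(1,\bar1)$ from Proposition~4.4 of \cite{FKLQWZ2024} (explicitly described as a complicated computation, with the sign conversion \eqref{equ:oldNewConvert}). Your proposed route does not close this: Abel/partial summation applied to $\tfrac12\sum_{a=1}^{(p-1)/2}\tfrac1a\sum_{j=0}^{a-1}\tfrac{(-1)^j}{2j+1}$ produces, besides the boundary term involving $T_p(\bar1)$, a residual sum of the shape $\sum_a (-1)^a\bigl(\sum_{b\le a}\tfrac1{2b}\bigr)/(2a+1)$, which is another mixed-parity alternating weight-two sum whose evaluation mod $p$ in terms of $E_{p-3}$ is exactly as deep as the original; calling it ``a standard mod-$p$ congruence'' is where the argument stops being a proof. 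By contrast, your depth-one evaluation $T_\calA(\bar2)=G_\calA$ via the classical congruence $\sum_{k=0}^{(p-3)/2}(-1)^k(2k+1)^{-2}\equiv E_{p-3}/2\pmod p$ is fine, but it covers only that one value.

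Everything else is correct and matches the paper (your series expression for $T_p(1,\bar1)$ is right, and you are right that linear independence of the generators is not needed), up to a harmless sign slip: \eqref{equ:FESdepth1odd} gives $T_\calA(1)=\tq_2$, not $-\tq_2$. To repair the weight-two case, either cite the evaluation $G_\calA=-2T_\calA(1,\bar1)$ from \cite{FKLQWZ2024} as the paper does, or supply a genuine proof of that single congruence (e.g., by identifying the residual sum above with a known Euler-number congruence of Sun's); as written, the key identity is asserted rather than proved.
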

\begin{proof}
The $w=1$ case is trivial. For~$w=2$, we already know $T_\calA(1,1)=T_\calA(2)=0$ from Theorem~\ref{thm:FMTVbasis}.
Let $\ta=\ty_0$, $\tb=\ty_1$, and $\tc=\ty_{-1}$ in the rest of the proof.
For alternating values, we first have the linear shuffle relation
\begin{equation*}
T_\calA(\tb\sha \tc)=-T_\calA(\tb\tc)\Rightarrow 2T_\calA(\tb\tc)+T_\calA(\tc\tb) \Rightarrow 2T_\calA(1,\bar1)+T_\calA(\bar1,\bar1)=0.
\end{equation*}

By using complicated computation (see Proposition~4.4 of our arxiv paper 2402.08160 and notice \eqref{equ:oldNewConvert}),
we have the additional relation
\begin{equation*}
T_\calA(\bar2)=G_\calA=-2T_\calA(1,\bar1).
\end{equation*}
Then, from the reversal relation \eqref{equ:FAMTSVReversalEvenDep}, we easily see that $T_\calA(\bar1,1)=-(-1)^{p'}T_\calA(1,\bar1).$
This completes the proof of the proposition.
\end{proof}

\section{Dimensions of $\FMTV$ and $\AMTV$}
We first need to point out that it is possible to study the alternating MTVs by converting them into
colored MZVs of level 4 and then applying the setup in~\cite{SingerZ2019}. For~example,
\begin{align*}
T(\bar2,\bar3)=&\, \sum_{n_1>n_2>0} \frac{(-1)^{n_1-2}(-1)^{n_2-1}}{(2n_1-2)^2(2n_2-1)^3} \\
=&\,\sum_{k_1>k_2>0} \frac{i^{k_1}(1+(-1)^{k_1})i^{k_2-1}(1-(-1)^{k_2})}{k_1^2 k_2^3} \\
=&\, -i\Big( Li_{2,3}(i,i)+Li_{2,3}(i,-i)-Li_{2,3}(-i,i)-Li_{2,3}(-i,-i)\Big).
\end{align*}
The caveat is that we need to extend our scalars to $\Q[i]$ in general. At~the end of~\cite{SingerZ2019},
we observed that $\dim_\Q \FCMZV^{4}_w\le 2^w$ for all $w\ge 1$, where $\FCMZV^{4}_w$ is the space spanned by all
colored MVZ of level 4 and weight $w$ over $\Q$.  By~the following, we expect that the
\begin{equation*}
\dim_\Q \FAMTV_w\le \dim_\Q \FAMMV_w \le 2^w,
\end{equation*}
where $\FAMMV$ is the space spanned by all the finite multiple mixed values.
Here, according to~\cite{XuZhao2020a}, the~multiple mixed values mean we allow
all possible even/odd combinations in the definition of such series instead of a fixed pattern,
such as that which appears in MTVs and MSVs).

\begin{cpp} \label{cpp}
Let $S$ be a set of colored MZVs (including MZVs and Euler sums) or (alternating) multiple mixed values
(or their variations/analogs, such as finite, symmetric, interpolated versions, etc.).
Then, the following statements should~hold.
\begin{enumerate}
  \item[(1)] Suppose all elements in $S$ have the same weight. If~they are linearly independent over $\Q$,
        then they are algebraically independent over $\Q$.
  \item[(2)]  If the weights of the values in $S$ are all different, then the values are linearly independent over $\Q$
(but, of course, may not be algebraically independent over $\Q$).
  \item[(3)]  If there is only one nonzero element in $S$, then it is transcendental over $\Q$.
\end{enumerate}
\end{cpp}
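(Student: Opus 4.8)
The plan is to reduce all three assertions to statements about the (conjectural) motivic Galois theory that governs these families of periods, and then to pull the conclusions back to the actual numbers (or to $\calA$) through the comparison isomorphism. Concretely, one expects each of the relevant $\Q$-algebras --- colored MZVs of level $N\in\{1,2,4\}$, multiple mixed values, and their finite, symmetric, and interpolated incarnations --- to be the ring of periods of a graded pro-unipotent motivic fundamental group, so that the ``motivic lift'' of the algebra is a graded Hopf comodule whose Lie coalgebra of indecomposables is (conjecturally) free on homogeneous generators of weight $\ge 1$ (weight $\ge 2$ in the non-alternating case). For the finite side the analogous scaffold is the one predicted by the Kaneko--Zagier-type Conjecture~\ref{conj:KanekoZagierAltVersion}, which identifies $\FES_w$ (and its $T$-value analogues) with a graded quotient of the Archimedean object. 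Granting such a structure, (1) and (3) become purely algebraic: a nonzero homogeneous element of positive weight in a polynomial $\Q$-algebra generated in positive weights is transcendental over the weight-zero part $\Q$, which is (3); and a family of weight-$w$ elements that maps to a linearly independent family in the motivic Lie coalgebra of indecomposables generates a free subalgebra, which is the natural (sharpened) form of (1).

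The mechanism for (2) is different and is in fact the most robust of the three: here one uses the weight filtration rather than the coalgebra structure. A putative $\Q$-linear relation among values of pairwise distinct weights lifts, conjecturally faithfully, to a relation among motivic periods; passing to the associated graded for the weight filtration and projecting onto the top weight occurring in the relation annihilates all other terms and leaves a relation among the top-weight motivic periods, which is impossible unless those coefficients vanish, and iterating downward forces every coefficient to be zero. Equivalently, the comparison map is expected to be an isomorphism of \emph{filtered} vector spaces, and such a map cannot manufacture relations across filtration steps. In each incarnation one must only be careful to use the correct $\Q$-structure and, on the symmetric side, to work modulo $\zeta(2)$ (or $\pi^2$) exactly as in Conjecture~\ref{conj:KanekoZagierAltVersion} and Proposition~\ref{prop:fESofTS}; with that proviso the argument is uniform across levels $2$ and $4$.

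The main obstacle is that every step above is conditional on open conjectures. Transferring algebraic or even linear independence from the motivic picture to honest real numbers requires Grothendieck's period conjecture (or the relevant special cases), which is out of reach even for isolated values --- one knows $\zeta(3)$ is irrational but not that it is transcendental, so (3) is open already for a single MZV of weight $3$. On the finite side the situation is if anything worse: the Galois-theoretic scaffold itself rests on Conjecture~\ref{conj:KanekoZagierAltVersion}, which is wide open, and one does not even know unconditionally that $\tq_2\ne 0$ in $\calA$; the strongest unconditional footholds on statement (3) in the finite setting are of abc-type, in the spirit of Theorem~\ref{thm:abcConj} and Silverman's theorem. For these reasons I would present the ``proof'' only in conditional form --- assuming the appropriate motivic formalism together with the period conjecture for the level in question, statements (1)--(3) follow from the elementary facts about graded polynomial algebras and filtered isomorphisms sketched above --- and I would flag the precise formulation of (1) as the one place where the ``philosophy'' must be read with care, since a relation such as $u^{2}v^{2}=(uv)^{2}$ among linearly independent same-weight monomials shows that one should pass to indecomposables (or exclude products) before the implication can hold literally.
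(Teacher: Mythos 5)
There is nothing in the paper to compare your attempt against: the statement is deliberately labeled a \emph{Conjecture-Principle-Philosophy}, and the paper gives no proof of it at all --- it is invoked only as a guiding heuristic (``Throughout the study, we were guided by \ldots''), since even its simplest instances (transcendence of $\zeta(2k+1)$, algebraic independence of $\tq_2$ and the $\gb_k$, nonvanishing of $\tq_2$ in $\calA$) are far beyond current technology. Your write-up is appropriate precisely because it does not pretend otherwise: the reduction to a motivic/graded-Hopf formalism plus Grothendieck's period conjecture on the Archimedean side, and to Conjecture~\ref{conj:KanekoZagierAltVersion} together with nonvanishing statements of abc-type (in the spirit of Theorem~\ref{thm:abcConj} and Silverman's result) on the finite side, is the standard way to make the philosophy plausible, and your remark that item (3) is open already for a single value such as $\zeta(3)$ matches the paper's own stance. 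One quibble: your cautionary ``counterexample'' to item (1), the relation $u^{2}v^{2}=(uv)^{2}$, is built from products of values, which are not literally elements of the set $S$ of colored MZVs or multiple mixed values as the statement is phrased; unless such a product happens to coincide with a single value of the family, it does not bear on (1) as written, so the caveat about passing to indecomposables, while sensible as a safety check, is not forced by the statement. In short, your conditional sketch is consistent with how the paper intends the principle to be read; just be clear that this item is a philosophy rather than a provable claim, and the paper treats it as such.
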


For example, we expect that $\zeta(n)$'s are not only irrational but are also transcendental for all $n\ge 2$.
We also expect that $\tq_2$ and $\gb_k$ are transcendental for all odd $k\ge 3$
and are all algebraically independent over $\Q$

Recall that $\MTV_w$ (resp. $\FMTV_w$) is the $\Q$-vector space generated by MTVs (resp. finite MTVs) of weight $w$.
Similarly,  we denote by $\AMTV$ (resp. $\FAMTV$) the space generated by alternating MTVs 
(resp. finite alternating MTVs) of weight $w$.
From numerical computation, we conjecture the following upper bounds for the dimensions of $\FMTV_w$ and $\FAMTV_w$.
In order to compare to the classical case, we tabulate the results together in the following. The~main software we used was the open source computer algebra system~GP-Pari.

\begin{center}
\begin{tabular}{|c|c|c|c|c|c|c|c|c|c|c|c|c|c|c|c|} \hline
       $\textbf{w}$         & \textbf{0}& \textbf{1} &  \emph{2}  &  \textbf{3}  &  \textbf{4}  &  \textbf{5}  &  \textbf{6}  & \textbf{7}  &  \textbf{8}  &  \textbf{9}  &  \textbf{10} &  \textbf{11}  &  \textbf{12} &  \textbf{13}  \\ \hline
$\FMTV_w$  & 0& 1 &  0  &  1  &  2  &  3  &  3  & 6  &  9  &  15 &  17 &  32  &  44 &  76  \\ \hline
$\MTV_w$  & 1 & 0 &  1  &  1  &  2  &  2  &  4  & 5  &  9  &  10 &  19 &  23  &  42  &  49 \\ \hline
$\FAMTV_w$ & 0 & 2 &  2  &  6  &  12 & 20  &  40 & 76 &   &  &  &    &    &    \\ \hline
$\AMTV_w$ & 0 & 1 &  2  &  4  &  7  &  13 &  24 & 44 &  81 &   &   &    &    &    \\ \hline
$\FAMMV_w$ & 0 & 1 &  2  &  4  &  8  &  16 &    &   &    &   &   &    &    &    \\  \hline
\end{tabular}
\end{center}

With strong numerical support, Xu and the author conjecture that $\{\dim_\Q \AMTV_w\}_{w\ge 1}$ form the
tribonacci sequence (see \cite[Conjecture~5.2]{XuZhao2020c}). For~MTVs, Kaneko and Tsumura conjecture
in \cite{KanekoTs2020} that, for~all $k\ge 1$
\begin{align*}
\dim_\Q \MTV_{2k}=\dim_\Q \MTV_{2k-1}+\dim_\Q \MTV_{2k-2}.
\end{align*}

From~numerical computation, we can formulate its finite analog as follows:
\begin{conj}\label{conj:dimFMTVs}
For all $k\ge 1$,
\begin{align*}
\dim_\Q \FMTV_{2k+1}=\dim_\Q \FMTV_{2k}+\dim_\Q \FMTV_{2k-1}.
\end{align*}
\end{conj}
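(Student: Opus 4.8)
A fully unconditional proof is out of reach --- even $\dim_\Q\MTV_w$ and $\dim_\Q\FES_w$ are only conjecturally known --- so the plan is a proof conditional on Conjecture~\ref{conj:KanekoZagierAltVersion} and on the Kaneko--Tsumura recursion stated above, together with an unconditional verification in the range $w\le 13$. The guiding idea is to transport the recursion from the Archimedean side across the isomorphism $f_\ES$.

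The first step is to pass to symmetric $T$-values. By Proposition~\ref{prop:fESofTS}, $f_\ES$ carries $T_\calA(\bfs)$ to $T_\sha^\Sy(\bfs)\bmod\zeta(2)\ES_{w-2}$, so, writing $\MTV^\Sy_w:=\bigl\langle\, T_\sha^\Sy(\bfs):\bfs\in\N^d,\ |\bfs|=w\,\bigr\rangle_\Q\subseteq\ES_w$, the isomorphism $f_\ES$ restricts to a linear isomorphism between $\FMTV_w$ and the image of $\MTV^\Sy_w$ in $\ES_w/\zeta(2)\ES_{w-2}$; hence
\[
\dim_\Q\FMTV_w=\dim_\Q\MTV^\Sy_w-\dim_\Q\bigl(\MTV^\Sy_w\cap\zeta(2)\ES_{w-2}\bigr).
\]
I would next prove the $\zeta(2)$-saturation statement $\MTV^\Sy_w\cap\zeta(2)\ES_{w-2}=\zeta(2)\,\MTV^\Sy_{w-2}$ --- a symmetric $T$-value divisible by $\zeta(2)$ in $\ES$ should already be $\zeta(2)$ times a weight $(w-2)$ symmetric $T$-value, which ought to fall out of the double shuffle / regularization description of these values --- and combine it with the (conjectural) injectivity of multiplication by $\zeta(2)$ on $\ES_{w-2}$ to obtain
\[
\dim_\Q\FMTV_w=\dim_\Q\MTV^\Sy_w-\dim_\Q\MTV^\Sy_{w-2}\qquad(w\ge 1).
\]

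Granting this, it suffices to prove the clean recursion $\dim_\Q\MTV^\Sy_{2k+1}=\dim_\Q\MTV^\Sy_{2k}+\dim_\Q\MTV^\Sy_{2k-1}$ for $k\ge 1$: the asserted identity for $\FMTV$ then follows by telescoping, together with the trivial equality $\dim_\Q\MTV^\Sy_0=\dim_\Q\MTV^\Sy_1=1$. This recursion is the exact symmetric counterpart of the Kaneko--Tsumura recursion for $\MTV_w$, shifted by one in parity because of the single new weight--one class $T_\sha^\Sy(1)=\log 2=-\zeta(\bar 1)$ that lives in $\MTV^\Sy_\bullet$ but not in $\MTV_\bullet$; I would try to deduce it from Kaneko and Tsumura's conjecture by the standard structural transfer, using that the $\sha$-symmetric $T$- and $S$-values obey the same relations as the genuine regularized $T$- and $S$-values --- the linear shuffle relations of Theorem~\ref{thm:FMTVshuffle}, the reversal relations tying $T$ to $S$ at odd depth, and the sum formula of Theorem~\ref{thm:sumFMTVs} all admit symmetric analogues. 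Pinning down the precise relation between the Hilbert series of $\MTV^\Sy_\bullet$ and that of the Kaneko--Tsumura space is, however, itself a substantial task.

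The main obstacle is the lower bound. The constructions above produce, unconditionally, an explicit spanning set of $\FMTV_w$ of size $u_w$, obtained exactly as in Theorem~\ref{thm:FMTVbasis} by reducing every $T_\calA(\bfs)$ modulo the reversal, shuffle and linear shuffle relations with computer assistance; I would want to show combinatorially, from the reduction procedure itself, that these $u_w$ satisfy $u_{2k+1}=u_{2k}+u_{2k-1}$, so that the recursion holds already for the rigorous upper bounds $\dim_\Q\FMTV_w\le u_w$. Promoting $\le$ to $=$ is where all the transcendence content sits: it amounts to the $\Q$-linear independence in $\calA$ of the reduced generators, hence essentially to Conjectures~\ref{conj:KanekoZagierAltVersion} and~\ref{conj:FESbasis}, and that is precisely why the statement must be left as a conjecture in general. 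For $w\le 13$ it can nonetheless be verified directly: the explicit finite relations cut the generators down to $u_w$ elements, and matching this count against the numerically computed rank (equivalently against $\dim_\Q\MTV^\Sy_w$ read off from the known tables of Euler sums and MTVs) gives $\dim_\Q\FMTV_w=u_w$, and hence Conjecture~\ref{conj:dimFMTVs}, in that range.
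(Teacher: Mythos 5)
There is nothing in the paper for your argument to be compared against: Conjecture~\ref{conj:dimFMTVs} is stated as a conjecture, formulated ``from numerical computation,'' and the only evidence offered is the dimension table (rigorous only as upper bounds, obtained by reducing generators modulo the reversal, shuffle, and linear shuffle relations with GP-Pari/Maple). The paper does not claim a proof, conditional or otherwise, so any purported proof must be judged on its own terms.

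On those terms your proposal has concrete gaps. First, the claimed ``unconditional verification in the range $w\le 13$'' is not attainable: the computations give only that $\dim_\Q\FMTV_w\le u_w$, and promoting this to equality requires $\Q$-linear independence of elements of $\calA$, for which essentially nothing is known --- it is open even that $\gb_3\ne 0$ (i.e., that $\dim_\Q\FMTV_3\ge 1$), and the paper itself must invoke Conjecture~\ref{conj:q2Conj} just to separate two weight-one elements in Theorem~\ref{thm:abcConj}. Matching $u_w$ against the Archimedean or symmetric side only checks consistency of two conjectural counts; it proves nothing about $\dim_\Q\FMTV_w$. Second, the conditional chain has unargued links: the saturation statement $\MTV^\Sy_w\cap\zeta(2)\ES_{w-2}=\zeta(2)\,\MTV^\Sy_{w-2}$ does not ``fall out'' of double shuffle (a priori a symmetric $T$-value divisible by $\zeta(2)$ could be $\zeta(2)$ times an Euler sum that is not a symmetric $T$-value), and the recursion $\dim_\Q\MTV^\Sy_{2k+1}=\dim_\Q\MTV^\Sy_{2k}+\dim_\Q\MTV^\Sy_{2k-1}$ is itself an open statement that does not follow from the Kaneko--Tsumura conjecture by any ``standard structural transfer''; note also the parity mismatch (their recursion is at even weight, the finite one at odd weight), which you attribute to the extra class $T_\sha^\Sy(1)$ but do not actually derive. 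Finally, the combinatorial claim $u_{2k+1}=u_{2k}+u_{2k-1}$ for the computed upper bounds is exactly the observed pattern in the table, not something extracted from the reduction procedure. So the proposal is a reasonable research programme but not a proof, and its framing of parts of it as established (the low-weight verification in particular) is incorrect; the statement is rightly left as a conjecture in the paper.
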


\section{Conclusions}
The author's main purpose in this paper is to study the finite and symmetric MTVs and their alternating versions, which are level two and level four variations of finite MZVs. It was found that there are many nontrivial $\Q$-linear relations among these values,
such as the reversal and the linear shuffle relations. We also numerically discovered some identities, which were proposed as conjectures. Due to the limitation of computing power, we then computed the structures of MTVs (their alternating versions) when the weight was less than
7 (resp.\ 4) by using finite Euler sums and the relations that we discovered. Throughout the study, we were guided by the Conjecture Principle Philosophy \ref{cpp}, which provides the big picture in which the main objects of this paper lie. We plan to investigate the symmetric versions of these values more in a future~paper.

\bigskip
\noindent
\textbf{Acknowledgment.} This research is supported by the Jacobs Prize from The Bishop's~School. The author thanks the referees for their detailed comments which has helped improve the clarity of the paper.

\end{document}